\newcommand{\TheTitle}{%
  A framework for structured linearizations of matrix polynomials
  in various bases
} 
\newcommand{\TheAuthors}{L. Robol, R. Vandebril, P. Van Dooren}
\renewcommand{\leq}{\leqslant}
\renewcommand{\geq}{\geqslant}
\theoremstyle{theorem}
\newtheorem{theorem}{Theorem}
\newtheorem{lemma}[theorem]{Lemma}
\newtheorem{corollary}[theorem]{Corollary}
\theoremstyle{definition}
\newtheorem{definition}[theorem]{Definition}
\theoremstyle{remark}  
\newtheorem{remark}[theorem]{Remark}
\newtheorem{numerical-experiment}{Numerical experiment}
\newcommand{\x}{\lambda}
\DeclareMathOperator{\rev}{rev}
\newcommand{\pf}{\otimes}
\newcommand{\prm}{\underline \times}
\DeclareMathOperator{\diag}{diag}
\DeclareMathOperator{\bsd}{bds}
\newcommand{\norm}[1]{\lVert #1 \rVert}
\begin{document}
  
  \begin{abstract}
     We present a framework for the construction of linearizations
    for scalar and
    matrix polynomials based on dual bases which, in the 
    case of orthogonal polynomials, can be described 
    by the associated recurrence relations. 
    The framework provides an extension of the classical
    linearization theory for polynomials expressed in non-monomial bases and
    allows to represent polynomials expressed in product families, that is 
    as a linear combination of elements of the form $\phi_i(\x) \psi_j(\x)$, where
    $\{ \phi_i(\x) \}$ and $\{ \psi_j(\x) \}$ can either be polynomial bases or 
    polynomial families which satisfy some mild assumptions. 
    
    We show that this general construction can be used for many different 
    purposes. Among them, we show how to linearize sums of polynomials
    and rational functions expressed in different bases. As an example, 
    this allows to
    look for intersections of functions interpolated on different nodes
    without converting them to the same basis. 
    
    We then provide some constructions for structured linearizations for
    $\star$-even and $\star$-palindromic matrix polynomials. The extensions
    of these constructions to
     $\star$-odd and $\star$-antipalindromic of odd degree
    is discussed and follows immediately from the previous results. 

  \end{abstract}  
    
  \author{Leonardo Robol}
  \address[Leonardo Robol]{Department of Computer Science, KU Leuven,  Celestijnenlaan 200A, 3001 Heverlee, Belgium.}
  \email{leonardo.robol@cs.kuleuven.be}
  
  \author{Raf Vandebril}
  \address[Leonardo Robol]{Department of Computer Science, KU Leuven,  Celestijnenlaan 200A, 3001 Heverlee, Belgium.}  
  \email{raf.vandebril@cs.kuleuven.be}
  
  \author{Paul Van Dooren}
  \address[Paul Van Dooren]{ICTEAM, Universite Catholique de Louvain, Batiment Euler, Avenue G. Lemaitre 4, B-1348 Louvain-la Neuve, Belgium.}
  \email{paul.vandooren@uclouvain.be}     
  
  \title[Linearizations of matrix polynomials in various bases]{
    A framework for structured linearizations of matrix polynomials
    in various bases
  }
    
  \keywords{
    Matrix polynomials, Rational functions, Non-monomial bases,
    Palindromic matrix polynomials, Even matrix polynomials, 
    Strong linearizations, Dual minimal bases}   
      
  \maketitle 
  
\section{Introduction}
  
  In recent years much interest has been devoted to finding linearizations
  for polynomials and matrix polynomials. The Frobenius linearization, i.e.,
  the classical companion, 
  has been the de-facto standard in polynomial eigenvalue problems and 
  polynomial rootfinding for a long time \cite{gantmacher,gohberg1982matrix}.
  Nevertheless, 
  recently much work has been put into developing other families of linearizations. 
  Among these some linearizations preserve spectral
  symmetries available in the original problem \cite{higham2006symmetric,mackey2006structured,mackey2009numerical}, others
  linearize matrix polynomials formulated in non-monomial bases
  \cite{amiraslani2009linearization, corless2007generalized} and also
  some variations are based on an idea of Fiedler about decomposing companion
  matrices into products of simple factors \cite{antoniou2004new,de2010fiedler,de2012fiedler,fiedler2003note}.  
  
  In this work we take as inspiration the results of 
  Dopico, P\'erez, Lawrence and Van Dooren \cite{ldvdp}
  that characterize the structure of some permuted Fiedler linearizations
  by using dual minimal bases \cite{forney1975minimal}. 
  We extend the results
  in a way that allows us to deal with many more formulations, 
  and we use it to derive numerous different linearizations. 
  We also use these examples
  to prove the effectiveness of this result as a tool for constructing
  structured linearizations (thus preserving spectral symmetries
  in the spirit of the works cited above) and also linearizations
  for sums of polynomials and rational functions. 
  
  In particular, we consider the rootfinding case of polynomials 
  that are expressed as linear combination of elements in a so-called
  product family; the most common case where this can be applied is when
  considering two different polynomial bases $\{ \phi_i \}$
  and $\{ \psi_j \}$ and representing polynomials as sums of objects
  of the form $\phi_i(\x) \psi_j(\x)$. This apparently artificial construction
  has, however, many interesting applications, such as finding intersections
  of polynomials and rational functions defined in different bases. 
  
  In Section~\ref{sec:lin-productbases} we give a formal definition
  of what we call
  a \emph{product family of polynomials}, denoted by 
  $\phi \pf \psi$. We define the vector
  $\pi_{k, \phi}(\x)$ to be the one with the elements of the family
  as entries and we show that $\pi_{k, \phi \pf \psi}(\x)$ is 
  given by $\pi_{k, \phi}(\x) \otimes 
  \pi_{k, \psi}(\x)$.   
  We present a theorem that allows to linearize every polynomial written
  as a linear combination of elements in a product family, and we also
  generalize the construction to the product of more than
  two families in Section~\ref{sec:morebases}.   
  We consider a certain class of \emph{dual polynomial bases}
  (with the notation of the classical work by Forney \cite{forney1975minimal})
  of a polynomial vector $\pi_{k, \phi}(\x)$, 
  which we identify with linear matrix polynomials $L_{k, \phi}(\x)$
  such that $L_{k, \phi}(\x) \pi_{k, \phi}(\x) = 0$, which will be used as
  a tool to build linearizations.   
  At the end of the section we introduce an explicit construction for
  linearizing polynomial families arising from orthogonal
  and interpolation bases. We cover the case of every polynomial
  basis endowed with a recurrence relation, and we provide explicit 
  constructions for the Lagrange, Newton, Hermite and 
  Bernstein cases. We describe
  the dual bases for all these cases and, as shown
  by Theorem~\ref{thm:twobases}, they are the only ingredient required
  to build the linearizations. 
  
  The rest of the paper deals with the problem of exploiting this freedom
  of choice to obtain many interesting
  results. 
  
  In Section~\ref{sec:sums} we show how
  to linearize the sum of two scalar polynomials or rational functions expressed
  in different bases, without the need of an explicit basis conversion. 
  This can have important applications in the cases where interpolation
  polynomials are obtained from experimental data (that cannot be resampled
  - so there is no choice for the interpolation basis) 
  or in cases where an explicit change of basis is badly conditioned. 
  
  Infinite eigenvalues may appear when linearizing
  the sum of polynomials. We report numerical experiments
  that show that they do not affect the numerical 
  robustness of the approach in many cases. 
  Moreover, we show that for the rational
  case, under mild hypotheses, it is possible to construct strong linearizations
  which do not have spurious infinite eigenvalues. 
  
  
  In Section~\ref{sec:structure-preservation} we turn our attention to 
  preserving spectral symmetries and we provide explicit constructions
  for linearizations of $\star$-even/odd and $\star$-palindromic matrix polynomials. 
  We show that a careful choice of the dual bases
  for use in Theorem~\ref{thm:twobases} yields linearizations with
  the same spectral symmetries of the original matrix polynomial. 
  
  Finally, in Section~\ref{sec:infinity-deflation}, we describe
  a numerical approach to deflate the infinite eigenvalues that are 
  present in some of the constructions, based on the staircase
  algorithm of Van Dooren \cite{van1979computation}. 
  
  In Section~\ref{sec:conclusions} we draw some conclusions
  and we propose some possible development for future research. 
  
  \section{A general framework to build linearizations}
    \subsection{Notation}
    
    In the following we will often work with the vector space
    of polynomials of degree at most $k$ on a field $\mathbb F$,
    denoted as $\mathbb F_k[\x]$. 
        
    In the study of strong linearizations is also important to consider
    the $\rev$ operator, which reverses the coefficients of the
    polynomial when represented in the monomial basis. 
    
    \begin{definition}
      Given a non zero 
      matrix polynomial $P(\x) = \sum_{i = 0}^n P_i \x^i$ we define
      its \emph{degree} as the largest integer $i \geq 0$ such that
      $P_i \neq 0$, that is the maximum of all the degrees of the entries
      of $P(\x)$. We denote it 
      by $\deg P(\x)$. 
    \end{definition}
    
    \begin{definition}
      Given a matrix polynomial $P(\x)$, its \emph{reversed polynomial}, 
      denoted by $\rev P(\x)$, is defined by 
      $\rev P(\x) := x^{\deg P(\x)} P(\x^{-1})$. 
    \end{definition}
    
    Intuitively, a linearization for a matrix polynomial $P(\x)$ 
    is a linear matrix polynomial $L(\x)$ such that $L(\x)$ is 
    singular only when $P(\x)$ is. However, this is not sufficient
    in most cases since there is also the need to match eigenvectors
    and partial multiplicities, so the definition has to be 
    a little more involved. We refer to the work of 
    De Ter\'an, Dopico and Mackey \cite{de2014spectral} for a 
    complete overview of the subject.
    
    \begin{definition}
      A matrix polynomial $E(\x)$ is said to be \emph{unimodular} if 
      it is invertible in the ring of matrix polynomials or, equivalently, 
      if $\det E(\x)$ is a non zero constant of the field. 
    \end{definition}
    
    \begin{definition}[Extended unimodular equivalence]
      Let $P(\x)$ and $Q(\x)$ be matrix polynomials. 
      We say that they 
      are \emph{extended unimodularly equivalent}
      if there exist positive integers $r,s$ and two unimodular
      matrix polynomials $E(\x)$ and $F(\x)$ of appropriate dimensions
      such that 
      \[
        E(\x) \begin{bmatrix}
         I_r \\
           & P(\x) \\
        \end{bmatrix} F(\x) = 
        \begin{bmatrix}
          I_s \\
           & Q(\x) \\
        \end{bmatrix}. 
      \] 
    \end{definition}
    
    \begin{definition}[Linearization] \label{def:linearization}
      A linear matrix polynomial $L(\x)$ is a \emph{linearization} for 
      a matrix polynomial $P(\x)$ if $P(\x)$ is extended
      unimodularly equivalent to $L(\x)$. 
    \end{definition}
    
    In order to preserve the complete eigenstructure of a matrix polynomial, 
    it is of interest to maintain also the infinite eigenvalues, 
    which are defined as the zero eigenvalues of the 
    reversed polynomial. To achieve this we have to extend
    Definition~\ref{def:linearization}.
    
    \begin{definition}[Spectral equivalence]
      Two matrix polynomials $P(\x)$ and $Q(\x)$
      are \emph{spectrally equivalent} if $P(\x)$ is extended
      unimodularly equivalent to $Q(\x)$ and 
      $\rev P(\x)$ is extended unimodularly equivalent to $\rev Q(\x)$.
    \end{definition}
    
    \begin{definition}[Strong linearization] \label{def:strong-linearization}
	    A linear matrix polynomial $L(\x)$ is said to be a \emph{strong linearization} for 
	    a matrix polynomial $P(\x)$ if it
	    is spectrally equivalent to $L(\x)$. 
    \end{definition}

    \subsection{Working with product families of polynomials}
    
    The linearizations that we build in this work concern
    polynomials expressed as linear combinations of elements
    of a product family. Let us add more details about this concept. 
    
    With the term \emph{family of polynomials} (or \emph{polynomial family}) 
    we mean
    any set of elements in  $\mathbb F[\x]$ indexed
    on a finite totally ordered set $(I, \leq)$. 
    To denote these objects we use the notation 
    $\{ \phi_i(\x) \ | \ i \in I \}$ or 
    its more compressed form $\{ \phi_i(\x) \}$ or 
    even $\{ \phi_i \}$ whenever
    the index set $I$ and the variable $\x$ are clear from the context.
    Often the set $I$ will be a segment of the natural
    numbers or a subset of $\mathbb N^d$ endowed with
    the lexicographical order, as in
    Definition~\ref{def:productfamily}. 
    
    An important example of such families are
    the polynomials $\phi_i(\x)$ forming
    a basis for the polynomials of degree up to $k$. 
    Another extension that deserves our attention is
    the following. 
    
   \begin{definition} \label{def:productfamily}
     Given two families of polynomials
     $\{ \phi_i \}$ for $i = 0, \ldots, \epsilon$ and $\{ \psi_j \}$
     for $j = 0, \ldots, \eta$, we define the \emph{product family} as 
     the indexed set defined by:
     \[
       \phi \pf \psi := \{ \phi_i(\x) \psi_j(\x), \ i = 0, \ldots, \epsilon, \ 
          j = 0, \ldots, \eta \}. 
     \]
     with the lexicographical order (so that $(i,j) \leq (i', j')$ if
     either
     $i < i'$ or $i = i'$ and $j \leq j'$). 
   \end{definition}

   We introduce some notation that will make it easier in the following
   to deal with these product families and their use in linearizations.    
   We use the symbol $\pi_{k, \phi}(\x)$ to denote the column vector 
   \[
     \pi_{k, \phi}(\x) := \begin{bmatrix}
       \phi_k(\x) \\
       \vdots \\
       \phi_0(\x) \\
     \end{bmatrix}. 
   \]
   We will often identify $\pi_{k, \phi}(\x)$ with the family 
   $\{ \phi_i \ | \ i = 0, \ldots, k \}$ since they are just different representations of the same
   mathematical object. 
      
   Notice that Definition~\ref{def:productfamily} is easily extendable 
   to the product of an arbitrary number of families. In this case we always
   consider the lexicographical order on the new family, which is particular
   convenient because then we have 
   \[
     \pi_{k, \phi^{(1)} \pf \ldots \pf \phi^{(j)}}(\x) = 
       \pi_{\epsilon_1, \phi^{(1)}}(\x) \otimes 
       \ldots \otimes 
       \pi_{\epsilon_j, \phi^{(j)}}(\x). 
   \]
   
   \begin{remark} \label{rem:basis-map}
     Whenever the family $\{ \phi_i \}$ is a basis for the polynomials of
     degree at most $k$, every polynomial $p(\x) \in \mathbb F_k[\x]$
     can be expressed as 
     \[
       p(\x) = \sum_{j = 0}^k a_j \phi_j(\x). 
     \]
     In particular, the scalar product with $\pi_{k, \phi}(\x)$ is a
     linear isomorphism
     between $\mathbb{F}^{k+1}$ and the vector space of polynomials of degree at
     most $k$. We have
     \[
       \begin{array}{cccc}
       \Gamma_\phi: & \mathbb F^{k+1} & \longrightarrow & \mathbb F_k[\x] \\[5pt]
                    &     a           & \longmapsto     & \Gamma_\phi(a) := a^T \pi_{k, \phi}(\x)
       \end{array}. 
     \]
   \end{remark}
   
   With the above notation $\Gamma_\phi^{-1}(p(\x))$ is  the vector 
   of coordinates of $p(\x)$ expressed in the basis $\{ \phi_i \}$. 
   
   We recall the following definitions that can be found 
   in \cite{forney1975minimal}. 
   
   \begin{definition}
     A matrix polynomial $G(\x) \in \mathbb F[\x]^{k \times n}$ is a \emph{polynomial basis} 
     if its rows are a basis for a subspace of 
     the vector space of polynomial $n$-tuples. 
   \end{definition}
   
   \begin{definition}[Dual basis]
     Two polynomial bases $G(\x) \in \mathbb F^{k \times n}$ 
     and $H(\x) \in \mathbb F^{j \times n}$ are \emph{dual} if 
     $G(\x) H(\x)^T = 0$ and $j + k = n$. 
   \end{definition}
   
   We are interested in a particular subclass of dual bases which
   are relevant for our construction. We will call them 
   \emph{dual linear bases}. 
   
   \begin{definition}[Full row-rank linear dual basis] 
     We say that a $k \times (k+1)$ 
     matrix pencil $L_{k, \phi}(\x)$
     is a \emph{full row-rank linear dual basis}
     to $\pi_{k,\phi}(\x)$ (or, analogously, for a polynomial family $\{ \phi_i \}$) if
     $
       L_{k, \phi}(\x) \pi_{k, \phi}(\x) = 0, 
     $
     and $L_{k, \phi}(\x)$ has full row rank for any $\x \in \mathbb F$. 
   \end{definition}    
   
   Often we just say that $L_{k, \phi}(\x)$ is a 
   full row-rank linear dual basis, 
   meaning that it 
   is dual to $\pi_{k, \phi}(\x)$. Since the family $\{ \phi_i \}$
   is reported in the notation that we use for $L_{k, \phi}(\x)$, 
   there is no risk of ambiguity. 
   In the context of developing strong linearizations, we also give
   the following definition (which can again be found in 
   \cite{forney1975minimal}): 
   
   \begin{definition}[Minimal basis]
     A basis $G(\x) \in \mathbb F^{k \times n}$ 
     is said to be \emph{minimal} if the sum of degrees of its
     rows is minimal among all the possible bases of the
     vector space that they span. 
   \end{definition}
   
   We are particularly interested in \emph{dual minimal bases}, that 
   is bases that are both minimal and dual bases.    
   In \cite{forney1975minimal} it is shown that this 
   is equivalent to asking that $G(\x)$ and $H(\x)$ are of full
   row rank for any $\x \in \mathbb F$ and the same holds for
   the matrices with rows equal to 
   the highest degree coefficient of every row
   of $G(\x)$ and $H(\x)$. 
   When the leading coefficients of $G(\x)$ and $H(\x)$ have only
   nonzero rows this corresponds to
   their leading coefficient (and the condition is analogous to asking that 
   they are of full row rank for every $\x \in \overline{\mathbb F}$). 
   
   \begin{remark}   
	In the rest of the paper we will often consider full row-rank 
	linear dual bases
	(which will sometimes be minimal) related to polynomial families
	$\{ \phi_i \}$. In order to make the exposition simpler we will 
	call these bases \emph{dual}, without adding the term \emph{linear}
	and \emph{full row-rank}.
	However, 
	it must be noted that these are a very particular kind of dual bases
	and most of the results could not hold in a more general context. 
   \end{remark}

    \subsection{Building linearizations using product families}
    
    Let $P(\x)$ be a polynomial (or a matrix polynomial) expressed
    as a linear combination of elements of a product family
    $\phi \pf \psi$. In this section we provide a way of
    linearizing it starting from the coefficients of this representation. 
    In order to obtain this construction
    we rely on the
    following extension of the main result of 
    \cite{ldvdp}. 
    
  \begin{theorem} \label{thm:twobases}
     Let $L_{k,\phi}(\x), L_{k,\psi}(\x) \in \mathbb{C}^{k \times (k+1)}[\x]$
     be dual linear bases for two polynomial 
     families $\{ \phi_i \}$ and $\{ \psi_i \}$. Assume that the elements
     of each polynomial family have no common divisor, that is there
     exists a vector $w_{k,\star}$ such that 
     $\pi_{k,\star}(\x)^T w_{k, \star} = 1$ 
     for $\star \in \{ \phi, \psi \}$. 
     Then the matrix polynomial 
     \[
       \mathcal L(\x) := \begin{bmatrix}
         \x M_1 + M_0 & L_{\eta,\phi}(\x)^T \otimes I \\
         L_{\epsilon,\psi}(\x) \otimes I & 0 \\
       \end{bmatrix}
     \]
     is a linearization for $P(\x) = (\pi_{\eta,\phi}(\x) \otimes I)^T 
     (\x M_1 + M_0) (\pi_{\epsilon,\psi}(\x) \otimes I)$,
     which is a polynomial expressed in the product family
     $\phi \pf \psi$. Moreover, 
     this linearization is strong\footnote{%
     Notice that the linearization is guaranteed to be strong for the
     matrix
     polynomial formally defined by $(\pi_{\eta,\phi}(\x) \otimes I)^T 
          (\x M_1 + M_0) (\pi_{\epsilon,\psi}(\x) \otimes I)$. In particular, 
          this expression might provide a matrix polynomial with leading
          coefficient zero, but we still need to consider that polynomial
          and not the one with the leading zero coefficients removed, 
          otherwise the strongness might be lost. 
     }
     if the dual bases are minimal and have reversals of full row rank. 
   \end{theorem}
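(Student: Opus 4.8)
The plan is to exhibit explicit unimodular transformations realizing the extended unimodular equivalence $\mathcal L(\x) \sim \diag(P(\x), I_s)$ with $s = (\eta+\epsilon)n$, and then to repeat the whole argument for the reversals. The central tool is a completion lemma for full row-rank dual bases. Since $L_{\epsilon,\psi}(\x)$ has full row rank at every $\x \in \mathbb C$ and $\mathbb C$ is algebraically closed, its Smith form is $[\,I_\epsilon\ 0\,]$, so it admits a polynomial right inverse $R_\psi(\x)$ with $L_{\epsilon,\psi}(\x)R_\psi(\x) = I_\epsilon$. I would then set $F_\psi(\x) := [\,\pi_{\epsilon,\psi}(\x)\ R_\psi(\x)\,]$ and check that it is unimodular: writing $S_\psi(\x) := \left[\begin{smallmatrix} w_{\epsilon,\psi}^T \\ L_{\epsilon,\psi}(\x)\end{smallmatrix}\right]$, the dual relation $L_{\epsilon,\psi}\pi_{\epsilon,\psi}=0$ together with the no-common-divisor identity $\pi_{\epsilon,\psi}^T w_{\epsilon,\psi}=1$ give $S_\psi F_\psi = \left[\begin{smallmatrix} 1 & * \\ 0 & I_\epsilon\end{smallmatrix}\right]$, whose determinant is $1$; since $\det S_\psi \cdot \det F_\psi$ is then a nonzero constant and both factors are polynomials, $F_\psi$ (and $S_\psi$) are unimodular. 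The same construction yields a unimodular $F_\phi(\x) = [\,\pi_{\eta,\phi}(\x)\ R_\phi(\x)\,]$, and by transposition $F_\phi(\x)^T L_{\eta,\phi}(\x)^T = \left[\begin{smallmatrix}0\\ I_\eta\end{smallmatrix}\right]$ while $L_{\epsilon,\psi}(\x)F_\psi(\x) = [\,0\ I_\epsilon\,]$.

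For the finite (linearization) part I would multiply $\mathcal L(\x)$ on the left by $\diag(F_\phi(\x)^T\otimes I,\, I_{\epsilon n})$ and on the right by $\diag(F_\psi(\x)\otimes I,\, I_{\eta n})$, both unimodular. The dual relations turn the off-diagonal blocks of $\mathcal L$ into $\left[\begin{smallmatrix}0\\ I_{\eta n}\end{smallmatrix}\right]$ and $[\,0\ I_{\epsilon n}\,]$, while the $(1,1)$ block becomes $(F_\phi^T\otimes I)(\x M_1 + M_0)(F_\psi\otimes I)$, whose leading $n\times n$ corner is exactly $(\pi_{\eta,\phi}\otimes I)^T(\x M_1 + M_0)(\pi_{\epsilon,\psi}\otimes I) = P(\x)$. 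Splitting rows as $(n,\eta n,\epsilon n)$ and columns as $(n,\epsilon n,\eta n)$, the pencil is now bordered by two identity blocks; using these identities to annihilate the remaining polynomial blocks by elementary (unimodular) row and column operations, and finishing with a constant block permutation, reduces $\mathcal L(\x)$ to $\diag(P(\x), I_{(\eta+\epsilon)n})$. This establishes that $\mathcal L(\x)$ is a linearization of $P(\x)$.

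For strongness I would apply the degree-one reversal. Since $\mathcal L$ is a pencil, $\rev\mathcal L(\x)$ keeps the same block shape, with $\rev L_{\eta,\phi}$ and $\rev L_{\epsilon,\psi}$ in place of $L_{\eta,\phi}, L_{\epsilon,\psi}$ and with $\x M_0 + M_1$ in the corner. The identity $L_{\epsilon,\psi}(\mu)\pi_{\epsilon,\psi}(\mu)=0$ for all $\mu$, rescaled by $\x^{\epsilon+1}$ after the substitution $\mu=\x^{-1}$, gives $\rev L_{\epsilon,\psi}(\x)\,\rev\pi_{\epsilon,\psi}(\x)=0$, so $\rev L_{\epsilon,\psi}$ is a dual pencil to $\rev\pi_{\epsilon,\psi}$ (and likewise for $\phi$). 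The hypotheses now do exactly the work needed: minimality guarantees that the reversed families still have no common divisor, and the assumption that the reversals have full row rank (for all finite $\x$, including $\x=0$) provides the polynomial right inverses required to build the unimodular completions for $\rev F_\psi$, $\rev F_\phi$. Re-running the reduction of the previous paragraph verbatim on $\rev\mathcal L(\x)$ produces $\diag(Q(\x), I_{(\eta+\epsilon)n})$ with $Q(\x) = (\rev\pi_{\eta,\phi}\otimes I)^T(M_1 + \x M_0)(\rev\pi_{\epsilon,\psi}\otimes I)$, and a direct computation with the formal grade $\eta+\epsilon+1$ shows $Q(\x)=\rev P(\x)$. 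Hence $\rev\mathcal L \sim \rev P$ as well, so $\mathcal L$ is a strong linearization.

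The main obstacle is the strong part, and it is twofold. First, one must verify that the reversed pencils $\rev L_{\eta,\phi}, \rev L_{\epsilon,\psi}$ genuinely satisfy all the hypotheses of the completion lemma — full row rank at every finite point together with a valid no-common-divisor vector for $\rev\pi_{\cdot,\star}$ — which is precisely why the extra assumptions (minimality and full row rank of the reversals) cannot be dropped; the finite part, by contrast, only requires full row rank at finite $\x$. Second, the grade bookkeeping must be handled with care: the equality $Q(\x)=\rev P(\x)$ holds only when $\rev P$ is taken with respect to the formal grade $\eta+\epsilon+1$, even if the genuine leading coefficient of $P$ happens to vanish. This is exactly the point flagged in the footnote — stripping the leading zero coefficients would change $\rev P$ and destroy the equivalence at infinity — so throughout the argument $P$ must be treated as a matrix polynomial of grade $\eta+\epsilon+1$ rather than of its actual degree.
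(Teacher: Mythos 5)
Your proposal is correct and follows essentially the same route as the paper's proof. The paper completes each dual basis $L_{k,\star}(\x)$ to a unimodular matrix $S_{k,\star}(\x)$ by appending a row (citing Beelen--Van Dooren) and then works with $V_{k,\star}(\x) = S_{k,\star}(\x)^{-1}$, which satisfies $V_{k,\star}(\x)e_{k+1} = \pi_{k,\star}(\x)$ and $L_{k,\star}(\x)V_{k,\star}(\x) = \begin{bmatrix} I & 0\end{bmatrix}$; so the paper's $V_{k,\star}$ is exactly your $F_\star = \begin{bmatrix}\pi_{k,\star} & R_\star\end{bmatrix}$ up to a column permutation and the choice of right inverse. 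Your Smith-form construction of $R_\star$ together with the determinant trick on $S_\star F_\star$ is a self-contained replacement for the paper's citation, and it has the merit of showing exactly where the hypothesis $w_{k,\star}^T\pi_{k,\star}(\x) = 1$ enters; the subsequent block reduction to $\diag(P(\x), I)$ and the verbatim re-run on the reversed pencil are the paper's argument (the paper hides your elimination steps inside the unspecified blocks $X(\x)$, $Y(\x)$, $\hat X(\x)$, $\hat Y(\x)$), and your grade bookkeeping matches the paper's footnote and its closing caveat.

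One justification in your strong part is misattributed, although the needed fact is true under the stated hypotheses. You assert that \emph{minimality} guarantees that the reversed vectors $\rev\pi_{k,\star}(\x)$ have coprime entries. What actually delivers this is the original coprimality hypothesis combined with the fact that the reversal is taken with respect to the maximal entry degree: a common root $\nu\neq 0$ of the reversed entries would force $1/\nu$ to be a common root of the original entries, and $\nu = 0$ is excluded because some entry attains the maximal degree. (Minimality and full-rank reversals do enter indirectly: they force every row of $L_{k,\star}$ to have degree exactly one and the coprime kernel vector of $\rev L_{k,\star}$ to have maximal entry degree $k$, which is what legitimizes your rescaling by $\x^{\epsilon+1}$ and the grade $\eta+\epsilon+1$ in $Q(\x) = \rev P(\x)$.) Note also that coprimality of the reversed entries yields, via B\'ezout, only a \emph{polynomial} vector $w'(\x)$ with $w'(\x)^T\rev\pi_{k,\star}(\x) = 1$, not a constant one (a constant one would require $\x^{k}$ to lie in the span of the family); fortunately your determinant argument is insensitive to this, since $\bigl[\begin{smallmatrix} w'(\x)^T \\ \rev L_{k,\star}(\x)\end{smallmatrix}\bigr]$ is still a polynomial matrix whose product with $\bigl[\begin{smallmatrix}\rev\pi_{k,\star}(\x) & R'_\star(\x)\end{smallmatrix}\bigr]$ is block triangular with constant nonzero determinant. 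With that one repair the argument is complete.
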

   
   \begin{proof}
     We mainly follow the proof given in \cite{ldvdp}. 
     Recall that we can find $b_{k,\star}$ such
     that the matrix polynomial
     \[
       S_{k,\star}(\x) := \begin{bmatrix}
         L_{k,\star}(\x) \\ b_{k,\star}^T \\
       \end{bmatrix}
     \]
     is unimodular \cite{beelen1988pencil}, and we know that 
     $S_{k,\star}(\x) \pi_{k,\star}(\x) = \alpha_{k,\star}(\x) e_{k+1}$. 
     This can be rewritten as 
     $\pi_{k,\star}(\x) = \alpha_{k,\star}(\x) S_{k,\star}^{-1}(\x)  e_{k+1}$. Since the entries of 
     $\pi_{k,\star}(\x)$ do not have any common factor we
     conclude that $\alpha_{k,\star}(\x)$ is a
     nonzero constant (and so we can drop the 
     dependency on $\x$). 
     We remark that rescaling the vector $b_{k,\star}$ by 
     a non-zero constant preserves the unimodularity of
     $S_{k,\star}(\x)$ (since it is equivalent to left multiplying
     by an invertible diagonal matrix). For this reason we can assume
     that $b_{k,\star}$ is chosen so that $S_{k,\star}(\x) \pi_{k,\star}(\x) = e_{k+1}$. 
     We define
     $
       V_{k,\star}(\x) := S_{k,\star}(\x)^{-1}
     $
     so that $V_{k,\star}(\x) e_{k+1} = \pi_{k,\star}(\x)$. 
     With these hypotheses we have that
     
     \[
       L_{k,\star}(\x) V_{k,\star}(\x) = \begin{bmatrix} I & 0 \end{bmatrix},  \qquad 
       V_{k,\star}^T(\x) L_{k,\star}^T(\x) = \begin{bmatrix}
         I \\
         0 \\
       \end{bmatrix}
     \]
     Now observe that the matrix polynomial $\mathcal L(\x)$ 
     can be transformed by means of a unimodular 
     transformation in the following way:
     \[
       \begin{bmatrix} 
         V^T_{\eta,\phi}(\x) \otimes I & X(\x) \\
           0            & I     \\
       \end{bmatrix}
       \begin{bmatrix}
         \x M_1 + M_0 & L_{\eta,\phi}^T(\x) \otimes I \\
         L_{\epsilon,\psi}(\x) \otimes I & 0 \\
       \end{bmatrix}
       \begin{bmatrix}
         V_{\epsilon,\psi}(\x) \otimes I & 0 \\
         Y(\x)                 & I \\
       \end{bmatrix} =: \tilde P(\x)
     \]
     where $\tilde P(\x)$ can be chosen as follows:
     \[
       \tilde P(\x) := \begin{bmatrix}
         0 & 0 & I \\
         0 & P(\x) & 0 \\
         I & 0 & 0 \\
       \end{bmatrix}, \quad 
       P(\x) = (\pi_{\eta, \phi}(\x) \otimes I)^T 
              (\x M_1 + M_0) (\pi_{\epsilon, \psi}(\x) \otimes I).
     \]
     The matrices $X(\x)$ and $Y(\x)$ can be chosen in order to 
     put zeros in the top-left corner almost everywhere, and 
     the only non zero block $P(\x)$ can be retrieved
     by knowing $V_{k, \star}(\x) e_k$, as usual for 
     $\star \in \{ \phi, \psi \}$. 
     
     We now check that the linearization is strong. 
     Similarly to the previous step, we can find a 
     constant vector $u_{k,\star}$ such that 
     \[
       \tilde S_{k,\star}(\x) = 
       \begin{bmatrix}
         u_{k,\star}\\
         \rev L_{k,\star}(\x)
       \end{bmatrix}, \qquad 
       \star \in \{ \phi, \psi \}
     \]
     and $\tilde S_{k,\star}(\x) \rev \pi(\x) = \tilde\alpha_{k,\star}(\x) e_1$. Since the entries of $\pi_{k,\star}(\x)$ do not share
     any common factor, we get that $\tilde\alpha_{k,\star}$ 
     is a nonzero constant. As in the previous case, 
     applying a diagonal scaling does not change the unimodularity so
     we can assume that $\tilde\alpha_{k,\star} = 1$. Define
     $W_{k,\star}(\x) = \tilde S_{k,\star}(\x)^{-1}$
     so that $W_{k,\star}(\x) e_1 = \rev \pi_{k,\star}(\x)$. 
     
     We can perform another unimodular transformation
     on the reversed polynomial. Let $A(\x)$ be defined
     as follows:
     \[
      \begin{bmatrix}
        W^T_{\eta,\phi}(\x) \otimes I & \hat X(\x) \\
        0                 & I \\
      \end{bmatrix}
      \begin{bmatrix}
        M_1 + \x M_0 & \rev L_{\eta,\phi}^T(\x) \otimes I \\
        \rev L_{\epsilon,\psi}(\x) \otimes I & 0 \\
      \end{bmatrix}
      \begin{bmatrix}
        W_{\epsilon,\psi}(\x) \otimes I & 0 \\
        \hat Y(\x) & I \\
      \end{bmatrix}. 
     \]
     Notice that $\rev L_{k,\star}(\x) W_{k,\star}(\x)
     = [ 0 \ I ]$ so we can write
     \[
       A(\x) = \begin{bmatrix}
         A_{1,1}(\x) & 0 & 0 \\
         0 & 0 & I \\
         0 & I & 0 \\
       \end{bmatrix}
     \]
     by appropriately choosing $\hat X(\x)$ and
     $\hat Y(\x)$. In particular we have
     \[
       A_{1,1}(\x) = (\rev \pi_{\eta,\phi}(\x)\otimes I)^T (M_1 + \x M_0) (\rev \pi_{\epsilon,\psi}(\x) \otimes I) = 
     \rev P(\x)\] if the degree of $P(\lambda)$ is maximum (i.e., if the
     coefficient that goes in front of the maximum degree term
     in the previous relation is not zero). 
   \end{proof}    
    
  \section{Further extensions of linearizations}
  
  In this section we will show some concrete examples for the choice
  of the product families $\phi \pf \psi$ and some related applications. 
  In particular, we show that this apparently abstract way of rewriting
  polynomials is useful in many different situations in order to 
  solve some problems directly, without unneeded change of bases.     
    
      \label{sec:lin-productbases}
    
  The most natural problem
  to consider is
  the case where both $\{ \phi_i \}$ and $\{ \psi_i \}$
  are polynomial bases for the polynomials of degree up to $\epsilon$
  and $\eta$, respectively. 
  
  We recall that in this case the product family $\phi \pf \psi$ is, 
  in general, not a basis since it contains too many vectors for the
  dimension of the vector space that it needs to span. 
  However, this extra flexibility in
  the representation will be useful in many situations.   
    
    \subsection{An extension to more than two bases}
    \label{sec:morebases}
    
    Given the above formulation for a linearization of a polynomial
    expressed in a product family, it is natural to ask if the framework can
    be extended to cover more than two bases, 
    that is to product families of the form
    \[
      \phi^{(1)} \pf \ldots \pf \phi^{(j)} := \{  
        \phi^{(1)}_{i_1} \ldots \phi^{(j)}_{i_j} \ | \  
        i_s = 0, \ldots, k_s, \ s = 1, \ldots, j
      \}
    \]
    where $\{ \phi^{(s)}_i \ | \ i = 0, \ldots, k_s \}$ are families of
    polynomials for $s = 1, \ldots, j$. 
    
    We show that there is no need to extend Theorem~\ref{thm:twobases}, 
    but it is sufficient to construct two appropriate
    dual bases $L_{\epsilon, \phi}(\x)$
    and $L_{\eta, \psi}(\x)$ to deal with this case. We only need
    to prove that  the hypotheses 
    of Theorem~\ref{thm:twobases} are satisfied. 
    
    \begin{definition}\label{def:productdualbasis}
      Let $L_{\epsilon, \phi}(\x)$ and $L_{\eta,\psi}(\x)$ be two dual
      bases
      for two families $\{ \phi_i \}$ and $\{ \psi_i \}$. Let  
      $w$ be a constant vector such that $w^T \pi_{\eta, \psi}(\x)$
      is a nonzero constant, 
      and $A$ an invertible matrix. 
      We say that the matrix 
      \[
        L_{k, \phi \pf \psi}(\x) = \begin{bmatrix}
          A \otimes L_{\eta, \psi}(\x) \\
          L_{\epsilon, \phi}(\x) \otimes w^T \\
        \end{bmatrix}, \qquad 
        k := (\epsilon+1) (\eta + 1) - 1, 
      \]
      is a \emph{product dual basis} of $L_{\epsilon, \phi}(\x)$ and $L_{\eta, \psi}(\x)$. We denote
      it as $L_{\epsilon, \phi}(\x) \prm L_{\eta, \psi}(\x)$. 
    \end{definition}
    
    Notice that, since the product
    dual basis is not unique, the previous notation
    actually denotes a family of such matrices so 
    we should be writing $L_{k, \phi \pf \psi}(\x) \in 
    L_{\epsilon,\phi}(\x) \prm L_{\eta, \psi}(\x)$. However, 
    in the following we will often write, by slight abuse
    of notation,  
    $L_{k, \phi \pf \psi}(\x) =
        L_{\epsilon,\phi}(\x) \prm L_{\eta, \psi}(\x)$.
    
    The above definition can be extended easily to a product
    of arbitrary families, by means of the following.  
    
    \begin{definition} 
      We say that, for any families of polynomials 
      $\{ \phi_i^{(1)} \}, \ldots, \{ \phi_i^{(j)} \}$,
      the matrix $L_{k, \phi^{(1)} \pf \ldots \pf \phi^{(j)}}(\x)$ 
      is a \emph{product dual basis} for these families, and 
      we denote it as $L_{\epsilon_1, \phi^{(1)}} \prm \ldots \prm 
      L_{\epsilon_j, \phi^{(j)}}(\x)$, 
      where
      \[
        L_{k, \phi^{(1)} \pf \ldots \pf \phi^{(j)}}(\x) = 
          ( L_{\epsilon_1, \phi^{(1)}}(\x) \prm \ldots \prm L_{\epsilon_{j-1}, \phi^{(j-1)}}(\x)) \prm L_{\phi^{(j)}}(\x). 
      \]
    \end{definition}
    
    Notice that the above formula provides a recursive manner
    for computing such product dual bases. 
    In the next lemma we show that they can be used to 
    construct linearizations in the spirit of
    Theorem~\ref{thm:twobases}. 
    
    \begin{lemma} \label{lem:productfamilyfullrowrank}
      Let $L_{k, \phi \pf \psi}(\x) = L_{\epsilon, \phi}(\x) \prm L_{\eta, \psi}(\x)$ 
      be a product dual basis. Then
      \begin{enumerate}[(i)]
              \item If $\pi_{k, \phi \pf \psi}(\x)$ is the vector containing the elements of 
              the product family
                $\phi \pf \psi$,  then
               $L_{k, \phi \pf \psi}(\x) \pi_{k, \phi \pf \psi}(\x) = 0. $  
        \item $L_{k, \phi \pf \psi}(\x)$ is a rectangular matrix with full row rank and size $k \times (k+1)$ for all values of 
        $\x$. 
      \end{enumerate}
    \end{lemma}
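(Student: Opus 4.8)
The plan is to dispatch the two items separately: item (i) is a direct Kronecker-product computation, while item (ii) carries the real content. Throughout I would use that $\pi_{k,\phi\pf\psi}(\x) = \pi_{\epsilon,\phi}(\x)\otimes\pi_{\eta,\psi}(\x)$, that $L_{\epsilon,\phi}(\x)$ is $\epsilon\times(\epsilon+1)$ and $L_{\eta,\psi}(\x)$ is $\eta\times(\eta+1)$, and that $w^T\pi_{\eta,\psi}(\x)$ is a nonzero constant, say $c$.

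For (i) I would simply multiply $L_{k,\phi\pf\psi}(\x)$ by $\pi_{\epsilon,\phi}(\x)\otimes\pi_{\eta,\psi}(\x)$ block by block, using the mixed-product property $(A\otimes B)(C\otimes D)=(AC)\otimes(BD)$. The top block produces $(A\pi_{\epsilon,\phi}(\x))\otimes(L_{\eta,\psi}(\x)\pi_{\eta,\psi}(\x))$, which vanishes because $L_{\eta,\psi}$ is dual to $\{\psi_i\}$; the bottom block produces $(L_{\epsilon,\phi}(\x)\pi_{\epsilon,\phi}(\x))\otimes(w^T\pi_{\eta,\psi}(\x))$, which vanishes because $L_{\epsilon,\phi}$ is dual to $\{\phi_i\}$. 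Hence the whole product is zero.

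For (ii) I would first pin down the sizes: matching the column and row counts forces $A$ to be the square $(\epsilon+1)\times(\epsilon+1)$ matrix, so $L_{k,\phi\pf\psi}(\x)$ has $(\epsilon+1)\eta+\epsilon=k$ rows and $(\epsilon+1)(\eta+1)=k+1$ columns. To prove full row rank at a fixed $\x$, I would take a left null vector and partition it as $[\,y^T\ z^T\,]$ with $z\in\mathbb F^{\epsilon}$ attached to the bottom block, then decouple the two blocks by right-multiplying the relation $y^T(A\otimes L_{\eta,\psi}(\x))+z^T(L_{\epsilon,\phi}(\x)\otimes w^T)=0$ by the vector $v\otimes\pi_{\eta,\psi}(\x)$ for arbitrary $v\in\mathbb F^{\epsilon+1}$. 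The top block is annihilated since $L_{\eta,\psi}(\x)\pi_{\eta,\psi}(\x)=0$, whereas the bottom block yields $c\,z^T L_{\epsilon,\phi}(\x)v$. As $c\neq0$ and $v$ is arbitrary, this gives $z^T L_{\epsilon,\phi}(\x)=0$, and full row rank of $L_{\epsilon,\phi}(\x)$ forces $z=0$. The relation then collapses to $y^T(A\otimes L_{\eta,\psi}(\x))=0$, and since $A$ is invertible and $L_{\eta,\psi}(\x)$ has full row rank, the Kronecker product has full row rank $(\epsilon+1)\eta$, whence $y=0$.

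The hard part is exactly the coupling in (ii): stacking two blocks of full row rank need not yield a matrix of full row rank, so one cannot argue rank-additively without more information. The structural hypothesis $w^T\pi_{\eta,\psi}(\x)\neq0$ is what saves the argument, and the multiplier $v\otimes\pi_{\eta,\psi}(\x)$ is engineered to exploit it: it lies in the right kernel of the top block for every $v$, yet is detected by the bottom block through the pairing with $w$. A secondary point to keep clean is the full-row-rank claim for $A\otimes L_{\eta,\psi}(\x)$, which I would justify from $\operatorname{rank}(A\otimes B)=\operatorname{rank}(A)\operatorname{rank}(B)$ together with $A$ invertible and $L_{\eta,\psi}(\x)$ of full row rank for every $\x\in\mathbb F$.
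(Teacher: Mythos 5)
Your proof is correct, but part (ii) takes a genuinely different route from the paper. The paper argues on the \emph{right} kernel: it takes any $v(\x)$ with $L_{k,\phi\pf\psi}(\x)v(\x)=0$, partitions it conformally with the Kronecker structure, uses invertibility of $A$ and full row rank of $L_{\eta,\psi}(\x)$ to force $v(\x)=\alpha(\x)\otimes\pi_{\eta,\psi}(\x)$, and then uses $w^T\pi_{\eta,\psi}(\x)\neq 0$ to conclude that the right kernel is exactly the span of $\pi_{k,\phi\pf\psi}(\x)$; since the matrix is $k\times(k+1)$, a one-dimensional kernel is equivalent to full row rank. You instead argue on the \emph{left} kernel (row independence): the test vector $v\otimes\pi_{\eta,\psi}(\x)$, which annihilates the top block for every $v$ but is detected by the bottom block through the pairing with $w$, first kills $z$, and then the Kronecker rank identity $\operatorname{rank}(A\otimes B)=\operatorname{rank}(A)\operatorname{rank}(B)$ kills $y$. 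Both arguments hinge on the same two hypotheses ($A$ invertible, $w^T\pi_{\eta,\psi}(\x)$ a nonzero constant), and your test-vector trick is a clean way to decouple the blocks without manipulating polynomial-valued kernel vectors. What the paper's version buys is the explicit description of the right kernel as the multiples of $\pi_{k,\phi\pf\psi}(\x)$, which is the form of the statement actually invoked later (in Corollary~\ref{cor:productfamilyfullrowrankmany} and when feeding these bases into Theorem~\ref{thm:twobases}); in your setup that description is not proved directly, though it follows at once from your rank statement by rank--nullity together with part (i) and the fact that $\pi_{k,\phi\pf\psi}(\x)\neq 0$ (the latter needs $\pi_{\epsilon,\phi}(\x)\neq 0$ as well, e.g.\ under the no-common-divisor hypothesis of Theorem~\ref{thm:twobases}). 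Your observation that sizes force $A$ to be $(\epsilon+1)\times(\epsilon+1)$ is also a point the paper leaves to ``direct inspection.''
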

    
    \begin{proof}
      We first check condition $(i)$. Notice that  we have
      $
        \pi_{k, \phi \pf \psi}(\x) = \pi_{\epsilon, \phi}(\x) \otimes 
      \pi_{\eta, \psi}(\x),
      $
      according to the ordering specified in
      Definition~\ref{def:productfamily}. For this reason
      we can write
      \[
        L_{k, \phi \pf \psi}(\x) \pi_{k, \phi \pf \psi}(\x)
        = \begin{bmatrix}
          A \pi_{\epsilon, \phi}(\x) \otimes L_{\eta, \psi}(\x) \pi_{\eta, \psi}(\x)\\
          L_{\epsilon, \phi}(\x) \pi_{\epsilon, \phi}(\x) \otimes w^T \pi_{\eta, \psi}(\x) \\
      \end{bmatrix} = 0. 
      \]
      The number of rows and columns can be verified by
      direct inspection in Definition~\ref{def:productdualbasis}. 
      Concerning condition $(ii)$ we shall check that, for any $\x$, 
      the only vectors in the right kernel of $L_{k, \phi \pf \psi}(\x)$
      are multiples of 
      $\pi_{k, \phi \pf \psi}(\x)$. Let $v(\x)$ be such a vector, 
      so that $L_{k, \phi \pf \psi}(\x) v(\x) = 0$. We can partition
      $v(\x) = [ v_0(\x) \ \dots \ v_{\epsilon}(\x)]^T$ according to 
      the Kronecker structure of $L_{k, \phi \pf \psi}(\x)$ so,
      recalling that $A$ is invertible, we
      have 
      \[
        L_{k, \phi \pf \psi}(\x) v(\x) = 0 \iff 
          \begin{cases}
            L_{\eta, \psi}(\x) v_j(\x) = 0  \\ 
            (L_{\epsilon, \phi}(\x) \otimes w^T) v(\x) = 0\\
          \end{cases} j = 0, \ldots, \epsilon. 
      \]
      The first relation tells us that $v_j(\x) = \alpha_j(\x) \pi_{\eta, \psi}(\x)$, due to $L_{\eta, \psi}(\x)$ being of full row rank. 
      If 
      we set $\alpha(\x) = [ \alpha_0(\x) \ \ldots \ \alpha_\epsilon(\x) ]^T$
      we have $v(\x) = \alpha(\x) \otimes \pi_{\eta, \psi}(\x)$, so that 
      the last equation becomes $L_{\epsilon, \phi}(\x) \alpha(\x) \otimes 
      w^T \pi_{\eta, \psi}(\x) = 0$. Since 
      $w^T \pi_{\eta, \psi}(\x) \neq 0$, the only solution is given by 
      $\alpha(\x) = \pi_{\epsilon, \phi}(\x)$. 
    \end{proof}
    
    \begin{remark} \label{rem:productfamilyconstruction}
      The proof of Lemma~\ref{lem:productfamilyfullrowrank} shows that this
      construction is not the only possible one. As an immediate example, 
      we could have defined $L_{\epsilon, \phi}(\x) \prm L_{\eta, \psi}(\x)$
      to be the matrix
      \[
        \tilde L_{k, \phi \pf \psi}(\x) = \begin{bmatrix}
          w^T \otimes L_{\eta, \psi}(\x) \\
          L_{\epsilon, \phi}(\x) \otimes A \\
        \end{bmatrix}, \qquad 
        k := (\epsilon+1) (\eta + 1) - 1,         
      \]
      with the same hypotheses of Definition~\ref{def:productdualbasis}, 
      and the proof would have been essentially the same. 
    \end{remark}
    
    \begin{remark}
    Lemma~\ref{lem:productfamilyfullrowrank}
    justifies the notation $L_{k, \phi \pf \psi}(\x)$ that we have used until now, since the
    product dual basis is a dual basis for the 
    product family $\phi \pf \psi$.  
    \end{remark}
    
    \begin{remark}
      Given the structure of the matrix $L_{\epsilon,\phi} \prm L_{\eta,\psi}(\x)$ that we
      have defined above it might be natural to ask if the more
      general matrix
      \[
        M(\x) = \begin{bmatrix}
        A \otimes L_{\eta, \psi}(\x) \\
         L_{\epsilon, \phi}(\x)  \otimes B  \\
        \end{bmatrix}, \qquad 
        A \in \mathbb{C}^{k_1 \times (\eta+1)}, \
        B \in \mathbb{C}^{k_2 \times (\epsilon + 1)}
      \]
      and such that $k_1 \eta + k_2 \epsilon = (\eta + 1)(\epsilon + 1) - 1$
      can be a product dual basis when $A$ and $B$ are of full row-rank. 
      The answer is no unless $k_1 = \epsilon + 1$ or $k_2 = \eta + 1$ 
      and so we are again back
      in the above two cases, as the next lemma shows. 
    \end{remark}
    
    \begin{lemma}
      Let $M(\x)$ be a matrix of the form
      \[
        M(\x) = \begin{bmatrix}
        A \otimes L_{\eta, \psi}(\x) \\
         L_{\epsilon, \phi}(\x)  \otimes B  \\
        \end{bmatrix}, \qquad 
        A \in \mathbb{C}^{k_1 \times (\epsilon+1)}, \
        B \in \mathbb{C}^{k_2 \times (\eta + 1)}
      \]
      with $L_{\epsilon, \phi}(\x)$ and $L_{\eta, \psi}(\x)$ 
      dual bases for $\pi_{\epsilon,\phi}(\x)$ and 
      $\pi_{\eta,\psi}(\x)$, $A$ and $B$ of full row rank with $k_1$ and
      $k_2$ rows and $\epsilon+1$ and $\eta+1$ columns, respectively. 
      Then the right kernel of $M(\x)$ has dimension
      at least $1 + (\epsilon + 1 - k_1)(\eta + 1 - k_2)$ for at least
      one value of $\x$ if either $A \pi_{\epsilon,\phi}(\x) \not\equiv 0$ or 
      $B \pi_{\eta,\psi}(\x) \not\equiv 0$. 
    \end{lemma}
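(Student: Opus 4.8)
The plan is to locate, inside the right kernel of $M(\x)$, a constant (i.e.\ $\x$-independent) subspace of dimension $(\epsilon+1-k_1)(\eta+1-k_2)$, and then to adjoin to it the single vector $\pi_{k,\phi\pf\psi}(\x)$, which always lies in the kernel but, thanks to the hypothesis, fails to belong to the constant subspace for at least one value of $\x$. First I would recast the two block equations in $M(\x)v=0$ as matrix equations: writing $v=\operatorname{vec}(V)$ for $V\in\mathbb{C}^{(\eta+1)\times(\epsilon+1)}$ and using $(P\otimes Q)\operatorname{vec}(V)=\operatorname{vec}(QVP^T)$, the system becomes
\[
  L_{\eta,\psi}(\x)\,V\,A^T=0,\qquad B\,V\,L_{\epsilon,\phi}(\x)^T=0.
\]

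Next I would introduce the $\x$-independent subspace $K_0:=\{\,V:VA^T=0,\ BV=0\,\}$, every element of which clearly solves both equations for all $\x$. Since $A$ has full row rank $k_1$, the relation $VA^T=0$ forces $V$ to annihilate the $k_1$-dimensional row space of $A$; since $B$ has full row rank $k_2$, the relation $BV=0$ confines the columns of $V$ to the $(\eta+1-k_2)$-dimensional kernel of $B$. Hence $V$ ranges freely over the linear maps from a space of dimension $\epsilon+1-k_1$ into one of dimension $\eta+1-k_2$, so $\dim K_0=(\epsilon+1-k_1)(\eta+1-k_2)$.

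The second ingredient is $\pi_{k,\phi\pf\psi}(\x)=\pi_{\epsilon,\phi}(\x)\otimes\pi_{\eta,\psi}(\x)$, which corresponds to $V_\pi(\x):=\pi_{\eta,\psi}(\x)\,\pi_{\epsilon,\phi}(\x)^T$ and lies in the kernel for every $\x$, because $L_{\eta,\psi}(\x)\pi_{\eta,\psi}(\x)=0$ and $L_{\epsilon,\phi}(\x)\pi_{\epsilon,\phi}(\x)=0$. The decisive step---and the one I expect to be the main obstacle, since it is where the hypothesis must be invoked---is to show that $V_\pi(\x)\notin K_0$ for at least one $\x$. From $V_\pi(\x)A^T=\pi_{\eta,\psi}(\x)\,(A\pi_{\epsilon,\phi}(\x))^T$ and $B\,V_\pi(\x)=(B\pi_{\eta,\psi}(\x))\,\pi_{\epsilon,\phi}(\x)^T$ I would argue as follows: if $A\pi_{\epsilon,\phi}(\x)\not\equiv0$, I choose $\x^*$ outside the finitely many zeros of $A\pi_{\epsilon,\phi}$ and of the nonzero polynomial vector $\pi_{\eta,\psi}$, so that $V_\pi(\x^*)A^T\neq0$; the case $B\pi_{\eta,\psi}(\x)\not\equiv0$ is handled symmetrically via $BV_\pi$. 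At such an $\x^*$ the kernel of $M(\x^*)$ then contains $K_0$ together with $V_\pi(\x^*)\notin K_0$, yielding dimension at least $1+(\epsilon+1-k_1)(\eta+1-k_2)$, as claimed. The only point requiring care is the avoidance of these finite exceptional sets, which is harmless because $\mathbb{F}=\mathbb{C}$ is infinite and the relevant polynomial vectors are not identically zero.
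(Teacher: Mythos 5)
Your proof is correct and follows essentially the same route as the paper: your constant subspace $K_0$ is exactly the paper's $S_A \otimes S_B$ (the Kronecker product of the kernels of $A$ and $B$) seen through the vec isomorphism, and your adjoined kernel element $V_\pi(\x)$ is the paper's vector $\pi_{\epsilon,\phi}(\x)\otimes\pi_{\eta,\psi}(\x)$. The differences are cosmetic (matrix equations instead of Kronecker products), apart from your explicit care in choosing $\x^*$ outside the zero sets of $A\pi_{\epsilon,\phi}$ and $\pi_{\eta,\psi}$, a point the paper leaves implicit.
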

    
    \begin{proof}
      Let $S_A = \{ v \ | \ Av = 0\}$ and $S_B = \{ w \ | \ Bw = 0 \}$
      be the right kernels of $A$ and $B$ which have dimensions
      $(\epsilon + 1 - k_1)$ and $(\eta + 1 - k_2)$, respectively. 
      We have that the span of 
      $\pi_{\epsilon,\phi}(\x) \otimes \pi_{\eta,\psi}(\x)$
      and $S_A \otimes S_B$ are included in the kernel
      of $M(\x)$. Since $A \pi_{\epsilon, \phi}(\x) \neq 0$ (or,
      analogously, $B \pi_{\eta, \psi}(\x) \neq 0$) for
      at least one $\x$ we have that the dimension of the 
      union of these two spaces is at least $1 + (\epsilon + 1 - k_1)(\eta + 1 - k_2)$, which concludes the proof. 
    \end{proof}
    
   Lemma~\ref{lem:productfamilyfullrowrank} can be 
    generalized to the product of more families of 
    polynomials, yielding the following. 
    
    \begin{corollary} \label{cor:productfamilyfullrowrankmany}
      Let $L_{\epsilon_1, \phi^{(1)}}(\x) \prm \ldots \prm L_{\epsilon_j, \phi^{(j)}}(\x)$ be a product dual basis of 
      $j$ dual bases. Then it has full row rank and the only
      elements in its right kernel are multiples of 
      $\pi_{k, \phi^{(1)} \pf \ldots \pf \phi^{(j)}(\x)}$, 
      independently of the construction chosen (either the one
      of Lemma~\ref{lem:productfamilyfullrowrank}
      or Remark~\ref{rem:productfamilyconstruction})
      
    \end{corollary}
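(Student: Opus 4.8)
The plan is to proceed by induction on the number of families $j$, reducing the general case to repeated application of Lemma~\ref{lem:productfamilyfullrowrank}. The base case $j=2$ is exactly Lemma~\ref{lem:productfamilyfullrowrank}, so the work lies in the inductive step. Assume the statement holds for $j-1$ families, so that $L' := L_{\epsilon_1, \phi^{(1)}}(\x) \prm \ldots \prm L_{\epsilon_{j-1}, \phi^{(j-1)}}(\x)$ is a full row-rank linear dual basis whose right kernel consists precisely of multiples of $\pi' := \pi_{k', \phi^{(1)} \pf \ldots \pf \phi^{(j-1)}}(\x)$, where $k' = \prod_{s=1}^{j-1}(\epsilon_s+1) - 1$. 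The recursive definition gives $L_{k, \phi^{(1)} \pf \ldots \pf \phi^{(j)}}(\x) = L' \prm L_{\epsilon_j, \phi^{(j)}}(\x)$, which is a product dual basis of the two dual bases $L'$ and $L_{\epsilon_j, \phi^{(j)}}(\x)$.

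First I would verify that $L'$ genuinely satisfies the hypotheses required to invoke Lemma~\ref{lem:productfamilyfullrowrank} in the two-family setting. Concretely, I need $L'$ to be a full row-rank linear dual basis for the family $\pi'$, and I need the elements of $\pi'$ to have no common divisor, i.e.\ the existence of a constant vector $w'$ with $\pi'^T w' = 1$. The full row-rank and dual properties follow directly from the inductive hypothesis. For the no-common-divisor condition, I would use that $\pi' = \pi_{\epsilon_1,\phi^{(1)}}(\x) \otimes \ldots \otimes \pi_{\epsilon_{j-1},\phi^{(j-1)}}(\x)$, so if each factor admits a dual vector $w_{\epsilon_s,\phi^{(s)}}$ with $\pi_{\epsilon_s,\phi^{(s)}}^T w_{\epsilon_s,\phi^{(s)}} = 1$, then the Kronecker product $w' := w_{\epsilon_1,\phi^{(1)}} \otimes \ldots \otimes w_{\epsilon_{j-1},\phi^{(j-1)}}$ satisfies $\pi'^T w' = 1$ by the mixed-product property of the Kronecker product. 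This step also silently requires that the individual dual bases carry this no-common-divisor property, which I would state as a standing assumption inherited from the families being genuine dual bases.

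With these hypotheses confirmed, I would apply Lemma~\ref{lem:productfamilyfullrowrank} directly to the pair $(L', L_{\epsilon_j, \phi^{(j)}}(\x))$. Part $(i)$ of that lemma gives $L_{k,\phi^{(1)} \pf \ldots \pf \phi^{(j)}}(\x)\, \pi_{k,\phi^{(1)} \pf \ldots \pf \phi^{(j)}}(\x) = 0$, using the associativity of the Kronecker product to identify $\pi_{k,\phi^{(1)} \pf \ldots \pf \phi^{(j)}}(\x) = \pi' \otimes \pi_{\epsilon_j,\phi^{(j)}}(\x)$; this matches the lexicographic ordering built into Definition~\ref{def:productfamily}. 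Part $(ii)$ gives full row rank for all $\x$ together with the characterization that the right kernel consists only of multiples of $\pi' \otimes \pi_{\epsilon_j,\phi^{(j)}}(\x)$, which is exactly the desired kernel for the $j$-family product. Finally, I would note that the argument is insensitive to whether one uses the construction of Definition~\ref{def:productdualbasis} or the transposed variant of Remark~\ref{rem:productfamilyconstruction}, since Lemma~\ref{lem:productfamilyfullrowrank} and its remark cover both, so the conclusion holds independently of the construction chosen at each recursive step.

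The main obstacle I anticipate is bookkeeping rather than conceptual: making sure the index $k$ and the Kronecker orderings compose correctly across recursion levels, and confirming that the no-common-divisor hypothesis propagates cleanly through the Kronecker products. The inductive hypothesis must be stated to include \emph{both} the rank/kernel conclusion \emph{and} the existence of the dual vector $w'$, otherwise the induction does not feed itself; verifying this via the mixed-product identity is routine but is the one place where care is needed to avoid a gap.
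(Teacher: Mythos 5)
Your proposal is correct and follows essentially the same route as the paper: the paper's proof is the one-line instruction to ``exploit the recursive definition and apply Lemma~\ref{lem:productfamilyfullrowrank},'' which is exactly the induction you carry out explicitly. Your additional care about propagating the no-common-divisor condition to the partial products (via $w' = w_{\epsilon_1,\phi^{(1)}} \otimes \ldots \otimes w_{\epsilon_{j-1},\phi^{(j-1)}}$ and the mixed-product property) fills in a hypothesis the paper leaves implicit, and is a sound refinement rather than a departure.
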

    
    \begin{proof}
      Exploit the recursive definition of $L_{\epsilon_1, \phi^{(1)}}(\x) \prm \ldots \prm L_{\epsilon_j, \phi^{(j)}}(\x)$ and 
      apply Lemma~\ref{lem:productfamilyfullrowrank}. 
    \end{proof}
       
    The construction of these product dual bases allows
    us to formulate the following result, which can be seen
    as an extension of Theorem~\ref{thm:twobases} that makes
    it possible to handle more than two bases at once. 
    
    \begin{theorem} \label{thm:morebases}
      Let $\{ \phi_i^{(1)} \}, \ldots, \{ \phi_i^{(j)} \}$ and 
      $\{ \psi_i^{(1)} \}, \ldots, \{ \psi_i^{(l)} \}$ be 
      families of polynomials. Then the matrix polynomial
      \[
        \mathcal L(\x) = \begin{bmatrix}
          \x M_1 + M_0 & (L_{\epsilon_1, \phi^{(1)}} \prm \ldots \prm L_{\epsilon_j, \phi^{(j)}}(\x))^T \\
          L_{\eta_1, \psi^{(1)}} \prm \ldots \prm L_{\eta_l, \psi^{(l)}}(\x) & 0 \\
        \end{bmatrix}
      \]
      is a linearization for the polynomial
      \[
        P(\x) = (\pi_{\epsilon_1, \phi^{(1)}}(\x) \otimes \ldots \otimes \pi_{\epsilon_j, \phi^{(j)}}(\x))^T 
      (\x M_1 + M_0) (\pi_{\eta_1, \psi^{(1)}}(\x) \otimes \ldots \otimes \pi_{\eta_l, \psi^{(l)}}(\x)). 
      \]
    \end{theorem}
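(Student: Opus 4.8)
The plan is to reduce the whole statement to a single application of Theorem~\ref{thm:twobases}, so that no genuinely new argument is required; the phrase ``there is no need to extend Theorem~\ref{thm:twobases}'' should be taken literally. First I would observe that the pencil $\mathcal{L}(\x)$ in the statement is already of the exact block form treated in Theorem~\ref{thm:twobases}, with the roles of $L_{\eta,\phi}(\x)$ and $L_{\epsilon,\psi}(\x)$ now played by the two product dual bases $\Phi(\x) := L_{\epsilon_1,\phi^{(1)}} \prm \ldots \prm L_{\epsilon_j,\phi^{(j)}}(\x)$ and $\Psi(\x) := L_{\eta_1,\psi^{(1)}} \prm \ldots \prm L_{\eta_l,\psi^{(l)}}(\x)$ (with the identity factors $\otimes I$ of Theorem~\ref{thm:twobases} understood in the matrix-coefficient case). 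It therefore suffices to check that $\Phi(\x)$ and $\Psi(\x)$ satisfy every hypothesis that Theorem~\ref{thm:twobases} imposes on its two dual bases.

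The dual-basis requirement is already supplied by the preceding results. By Corollary~\ref{cor:productfamilyfullrowrankmany}, each of $\Phi(\x)$ and $\Psi(\x)$ is a full row-rank $k \times (k+1)$ pencil whose right kernel, for every $\x$, is spanned by a single Kronecker product of the factor vectors. Invoking the identity $\pi_{k,\phi^{(1)} \pf \ldots \pf \phi^{(j)}}(\x) = \pi_{\epsilon_1,\phi^{(1)}}(\x) \otimes \ldots \otimes \pi_{\epsilon_j,\phi^{(j)}}(\x)$ recorded after Definition~\ref{def:productfamily}, that kernel vector is exactly the product-family vector, so $\Phi(\x)$ is a dual linear basis for $\phi^{(1)} \pf \ldots \pf \phi^{(j)}$, and symmetrically $\Psi(\x)$ is one for $\psi^{(1)} \pf \ldots \pf \psi^{(l)}$.

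The only remaining hypothesis is the no-common-divisor condition, namely the existence of a constant vector $w$ with $\pi^T w \equiv 1$ for each product family. I would build such a vector as a Kronecker product of the corresponding vectors for the individual factors: assuming each factor family has no common divisor, pick $w^{(s)}$ with $\pi_{\epsilon_s,\phi^{(s)}}(\x)^T w^{(s)} \equiv 1$ and set $w := w^{(1)} \otimes \ldots \otimes w^{(j)}$. Then the mixed-product property $(a \otimes b)^T (c \otimes d) = (a^T c)(b^T d)$ gives $\prod_s \bigl(\pi_{\epsilon_s,\phi^{(s)}}^T w^{(s)}\bigr) \equiv 1$, so the product family inherits the no-common-divisor property from its factors, and the identical construction works on the $\psi$ side.

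With both hypotheses verified, Theorem~\ref{thm:twobases} applies to $\mathcal{L}(\x)$ with dual bases $\Phi(\x)$ and $\Psi(\x)$, and the matrix polynomial it linearizes is $(\pi_{k,\phi^{(1)} \pf \ldots}(\x) \otimes I)^T (\x M_1 + M_0)(\pi_{k,\psi^{(1)} \pf \ldots}(\x) \otimes I)$, which is precisely $P(\x)$ once the product-family vectors are expanded as Kronecker products. I do not expect a real obstacle here, since everything structural was already established in Lemma~\ref{lem:productfamilyfullrowrank} and Corollary~\ref{cor:productfamilyfullrowrankmany}; the one point requiring genuine care is the bookkeeping, i.e.\ confirming that the associativity of the Kronecker product together with the chosen lexicographic ordering makes the $P(\x)$ output by Theorem~\ref{thm:twobases} agree termwise with the $P(\x)$ stated here.
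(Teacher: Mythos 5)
Your proposal is correct and follows essentially the same route as the paper, whose entire proof is a one-line appeal to Theorem~\ref{thm:twobases} with hypotheses supplied by Lemma~\ref{lem:productfamilyfullrowrank} and Corollary~\ref{cor:productfamilyfullrowrankmany}. In fact you are somewhat more careful than the paper: your explicit verification of the no-common-divisor hypothesis for the product families, via $w = w^{(1)} \otimes \ldots \otimes w^{(j)}$ and the mixed-product property, is a detail the paper's proof leaves implicit.
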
 
    
    \begin{proof}
      Apply Theorem~\ref{thm:twobases}, whose hypothesis are satisfied because
      of Lemma~\ref{lem:productfamilyfullrowrank} and 
      Corollary~\ref{cor:productfamilyfullrowrankmany}. 
    \end{proof}
    
    Here is an example of the structure that the matrix 
    $L_{\epsilon, \phi} \prm L_{\eta, \psi}(\x)$ can have in a
    simple case. Let $\{ \phi_i \}$ 
    be the Chebyshev basis, while the family $\{ \psi_i \}$ 
    is any degree graded polynomial family. The matrix 
    $L_{\epsilon, \phi} \prm L_{\eta, \psi}(\x)$ can be realized by
    the following by choosing $A = I$ and $w = e_{\eta + 1}$. 
    \[
    \begingroup
    \newcommand{\eone}[1]{\begin{array}{ccc} & & #1  \end{array}}
      L_{\epsilon, \phi} \prm L_{\eta, \psi}(\x) = \begin{bmatrix}
        L_{\eta, \psi}(\x) \\
        & L_{\eta, \psi}(\x) \\
        && \ddots \\
        &&& L_{\eta, \psi}(\x) \\
        &&&& L_{\eta, \psi}(\x) \\
        \eone 1 & \eone{-2\x} & \eone 1  \\
        & \ddots & \ddots & \ddots \\
        &&  \eone 1 & \eone{-2\x} & \eone 1  \\
        &&& \eone 1 & \eone{-\x} \\
      \end{bmatrix}.
      \endgroup
    \]  
    
    In order to give an example of how these variations behave 
    in practice, we consider what happens when taking the product basis
    of several monomial bases.
    
    The monomial basis, in this setting, is rather special. In fact, 
    the elements of the product family of two monomial bases are of
    the form $\lambda^i \lambda^j = \lambda^{i+j}$ and 
    so they correspond to elements of a (larger) monomial basis. 
    However, notice that this is 
    not true in general, as for example
    when considering $\phi_i(\x)$ belonging to
    other polynomial bases. 
     
    We can exploit this fact by rephrasing any polynomial expressed
    in the monomial basis as a polynomial in the product family of
    two monomial bases (like in \cite{ldvdp}) or also in the product
    family of more bases, by using the framework above. 
      
      We show here some examples to illustrate
      some possibilities. 
      Let $p(\x) = \sum_{i = 0}^3 p_i \lambda^i$ a
      degree $3$ polynomial. Then we can obtain different linearizations for it. 
      
      As a first example, choosing
      $\{ \psi_i \} = \{ 1, \lambda, \lambda^2 \}$ 
      and $\{ \phi_i \} = \{ 1 \}$ yields the classical
      Frobenius form: 
      \[
        \mathcal L(\x) = \left[ \begin{array}{ccc}
          \x p_3 + p_2 & p_1 & p_0 \\ \hline
          1 & -\x \\
          & 1 & -\x \\
        \end{array} \right]. 
      \]
      We can instead choose $\{ \psi_i \} = \{ \phi_i \} = \{ 1, \x \}$ 
      and obtain a symmetric linearization
      (this is only one of the possibilities for distributing the coefficients): 
      \[
        \mathcal L(\x) = \left[\begin{array}{cc|c} 
          \x p_3 + p_2 & \frac 1 2 p_1 & 1 \\
          \frac 1 2 p_1 & p_0 & -\x \\\hline
          1 & -\x & 0 \\
        \end{array}\right]. 
      \]
      But we can also choose to set $\{ \psi_i \} = \{ 1, \x \} \pf \{ 1, \x \}$ and $\{ \phi_i \} = \{ 1 \}$, and we obtain:
      \[
        \mathcal L(\x) = \left[\begin{array}{cccc} 
          \x p_3 + p_2 & \frac 1 2 p_1 & \frac 1 2 p_1 & p_0 \\ \hline
          1 & -\x \\ 
          && 1 & -\x \\ \hline
          & 1 & & -\x \\
        \end{array}\right]. 
      \]
      One thing can be noticed immediately: we have increased the dimension
      of the problem. In fact the matrix $L(\x)$ that we have used in the lower part 
      has its dimension increased by $1$ since it represents $\x$
      two times. 
      This has the consequence that while $L_{1,1,\phi}(\x)$ has full row rank its reversal
      does not, and so the linearization is not strong. In fact, here we have
      a spurious infinite eigenvalue. 
      
    \subsection{Handling orthogonal bases}
    
    This section is devoted to study the different structure of the
    dual basis $L_{k, \phi}(\x)$ when $\{ \phi_i \}$ is a 
    non-monomial basis. 
    
    We first deal with the case where the basis
    $\{ \phi_i(\x) \}$ is degree graded 
   and satisfies a three-terms recurrence relation of the form
   \begin{equation} \label{eq:recurrence}
     \alpha \phi_{j+1}(\x) = (\x-\beta) \phi_j(\x) - \gamma \phi_{j-1}(\x), 
     \qquad \alpha \neq 0, \quad j > 0, 
   \end{equation}    
   which includes all the orthogonal polynomials with a 
   constant three term recurrence (with the possible
   exception of the first two elements of the basis). 
   Notice, however, that the result
   can be easily generalized to more general recurrences. 
   \begin{lemma} \label{lem:recurrence-relation}
     Let $\{ \phi_i \}$ be a degree graded basis satisfying the
     three-terms recurrence
     relation~\eqref{eq:recurrence}. Then the linear matrix polynomial 
     $L_{k,\phi}(\x)$ of size $k \times (k+1)$ defined as follows
     \[
       L_{k, \phi}(\x) := \begin{bmatrix}
              \alpha & (\beta - \x) & \gamma \\
              & \ddots & \ddots & \ddots \\
              & & \alpha & (\beta - \x) & \gamma \\
              & & & \phi_0(\x) & -\phi_1(\x) \\
            \end{bmatrix}
     \]
     has full row rank for any $\x \in \mathbb F$ 
     and is such that 
     \[
       L_{k, \phi}(\x) \pi_{k, \phi}(\x) = 0, \qquad \text{with }  
       \pi_{k,\phi}(\x) := \begin{bmatrix}
         \phi_{k}(\x) \\
         \vdots \\
         \phi_0(\x) \\
       \end{bmatrix}. 
     \]
     Moreover, the leading coefficient of $L_{k,\phi}(\x)$ 
     has full row rank. 
   \end{lemma}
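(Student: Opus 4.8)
The plan is to establish the three claims in turn -- the kernel relation, full row rank for every $\x$, and full row rank of the leading coefficient. All three reduce to short direct computations, so the only real care needed is at the boundary of the recurrence, where \eqref{eq:recurrence} stops being valid.

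First I would verify $L_{k,\phi}(\x)\pi_{k,\phi}(\x) = 0$ by reading off the product one row at a time. Recalling that the $m$-th entry of $\pi_{k,\phi}(\x)$ is $\phi_{k+1-m}(\x)$, the $i$-th of the first $k-1$ rows (which carries $\alpha, (\beta-\x), \gamma$ in columns $i,i+1,i+2$) contracts against $\pi_{k,\phi}(\x)$ to give exactly $\alpha\phi_{k+1-i}(\x) + (\beta-\x)\phi_{k-i}(\x) + \gamma\phi_{k-1-i}(\x)$, which is the recurrence~\eqref{eq:recurrence} evaluated at $j = k-i$. Since $i$ ranges in $1,\ldots,k-1$ we have $j = k-i \in \{1,\ldots,k-1\}$, all strictly positive, so each of these entries vanishes. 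The point worth flagging is the last row: because \eqref{eq:recurrence} is only guaranteed for $j>0$, the case $j=0$ cannot be handled by the same pattern, and this is precisely why the final row is chosen as $[\,0,\ldots,0,\phi_0(\x),-\phi_1(\x)\,]$; contracting it against the last two entries $\phi_1(\x),\phi_0(\x)$ of $\pi_{k,\phi}(\x)$ produces $\phi_0(\x)\phi_1(\x)-\phi_1(\x)\phi_0(\x)=0$ identically.

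Next, to prove full row rank for every $\x\in\mathbb F$, I would exhibit a nonvanishing $k\times k$ minor rather than argue about kernels. Deleting the last column leaves a square matrix that is upper triangular: each of the first $k-1$ rows has its leading entry $\alpha$ on the diagonal, and the last row, now truncated, retains only $\phi_0(\x)$ in position $(k,k)$. Its determinant is therefore $\alpha^{k-1}\phi_0(\x)$. Here I would invoke the degree-graded hypothesis: $\deg\phi_0 = 0$ forces $\phi_0$ to be a nonzero constant, and since $\alpha\neq 0$ the minor is a nonzero constant, independent of $\x$, giving full row rank everywhere.

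Finally, for the leading coefficient I would note that every row of $L_{k,\phi}(\x)$ has degree exactly one -- the first $k-1$ rows through the $-\x$ term and the last through $-\phi_1(\x)$ -- so the matrix of leading row coefficients coincides with the pencil's $\x$-coefficient $M_1$. Collecting the $\x$-terms, $M_1$ has a single nonzero entry in each of columns $2,\ldots,k+1$: the value $-1$ coming from $(\beta-\x)$ in the first $k-1$ rows, and the negative of the leading coefficient of $\phi_1$ in the last row. Restricting to columns $2,\ldots,k+1$ yields a diagonal matrix whose entries are $-1,\ldots,-1$ and $-\mathrm{lc}(\phi_1)$; this is nonsingular because $\deg\phi_1 = 1$ guarantees $\mathrm{lc}(\phi_1)\neq 0$, so $M_1$ has full row rank. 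The computations are routine throughout; the only subtlety, and the single place where the degree-graded assumption is essential, is ensuring the two constants $\phi_0$ and $\mathrm{lc}(\phi_1)$ are nonzero, together with the recognition that the special last row is exactly what compensates for the failure of \eqref{eq:recurrence} at $j=0$.
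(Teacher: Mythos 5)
Your proposal is correct and follows essentially the same route as the paper's proof: row-by-row verification of $L_{k,\phi}(\x)\pi_{k,\phi}(\x)=0$ via the recurrence (with the special last row handling the $j=0$ boundary), the nonzero $k\times k$ upper-triangular minor $\alpha^{k-1}\phi_0(\x)$ from the first $k$ columns for full row rank, and invertibility of the leading coefficient restricted to the last $k$ columns, where the degree-graded hypothesis supplies $\phi_0\neq 0$ and $\mathrm{lc}(\phi_1)\neq 0$. Your write-up is merely more explicit about the indexing and about why the recurrence's restriction to $j>0$ necessitates the special last row.
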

   
   \begin{proof}
     It is immediate to verify that $L_{k, \phi}(\x) \pi_{k, \phi}(\x) = 0$, since
     each row of $L_{k,\phi}(\x)$ but the last one is just the
     recurrence relation of \eqref{eq:recurrence} and 
     the last one yields $\phi_0(\x) \phi_1(\x) - \phi_1(\x) \phi_0(\x) = 0$. 
     
     We can then check that the matrix has full row rank. Notice
     that the first $k$ columns of $L_{k,\phi}(\x)$ form an upper
     triangular matrix with determinant $\alpha^{k-1} \phi_0(\x)$.
     The basis is degree graded so $\phi_0(\x)$ is an invertible
     constant and $L_{k,\phi}(\x)$ contains an invertible matrix
     of order $k \times k$, thereby proving our claim. 
     
     It is immediate to verify the last claim, since the leading coefficient
     of $L_{k,\phi}(\x)$ with the first column removed is a diagonal matrix
     with nonzero elements on the diagonal, and so it is invertible. 
   \end{proof}
   
   We can immediately construct some examples for the application of the 
   theorem. Consider the Chebyshev basis of the first kind $\{ T_i(\x) \}$, which
   satisfies a recurrence relation of the form: 
   \[
     T_{j+1}(\x) = 2\x T_j(\x) - T_{j-1}(\x). 
   \]
   Then we have that the matrix polynomial 
   \[
     \mathcal L(\x) = \begin{bmatrix}
       \lambda M_1 + M_0 & L_{\eta,T}(\x)^T \\
       L_{\epsilon,T}(\x) & 0 \\
     \end{bmatrix}, \qquad 
     L_{k, T}(\x) := \begin{bmatrix}
       1 & -2\x & 1 \\
         & \ddots & \ddots & \ddots \\
         &        & 1      & -2\x    & 1 \\
         &        &        & 1      & -\x \\
     \end{bmatrix}
   \]
   is a linearization for the polynomial $p(\x) = \sum_{i = 1}^{\epsilon}
   \sum_{j = 1}^\eta (\lambda M_1 + M_0)_{i,j} T_i(\x) T_j(\x)$. As 
   shown in \cite{lawrence2016constructing}, the product $T_i(\x) T_j(\x)$ can be
   rephrased in terms of sums of Chebyshev polynomials, and 
   this can be used to build a linearization for polynomials
   expressed in the Chebyshev basis (so without product families involved).     
   
   \subsection{Handling interpolation bases}
   
   The framework covers orthogonal bases, but 
   there are some other interesting cases, as for example 
   the interpolation bases such as Lagrange, Newton and Hermite. 
   
   In this section we study their structures. Recall that, 
   by Theorem~\ref{thm:twobases}, once we have constructed
   the dual basis $L_{k, \phi}(\x)$ for one of these bases, 
   we need to ensure that $L_{k, \phi}(\x) \pi_{k,\phi}(\x) = 0$ and 
   that $L_{k, \phi}(\x)$ has full row rank. In order to have a strong
   linearization we also require the dual basis to be minimal. 
   
\subsection{The Lagrange basis}
  
  Let $\sigma_1^{(1)}$, $\ldots$, $\sigma^{(1)}_\epsilon$ and 
  $\sigma_1^{(2)}$, $\ldots$, $\sigma_\eta^{(2)}$ two
  (not necessarily disjoint) sets of pairwise different
  nodes in the complex plane. Then we 
  can define the weights and the Lagrange polynomials by
  \[
    t_i^{(s)} := \prod_{j \neq i} (\sigma_i^{(s)} - \sigma_j^{(s)}), \qquad 
    l_i^{(s)}(\x) := \frac{1}{t_i^{(s)}} \prod_{j \neq i} (\x - \sigma_j^{(s)}), 
    \qquad s \in \{ 1,2 \}. 
  \]
  In the following let $\phi_j(\x) = l_j^{(1)}(\x)$ 
  and $\psi_j(\x) = l_j^{(2)}(\x)$, coherently with the notation
  used before. 
  The linearization for a polynomial expressed in a product family,
  built according to Theorem~\ref{thm:twobases},
  has the following structure: 
  \[  
   \mathcal L(\x) = \begin{bmatrix}
     \x M_1 + M_0 & L_{\eta,\psi}(\x)^T \\
     L_{\epsilon,\phi}(\x) & 0 \\
   \end{bmatrix}
  \]
  where 
  \[
    L_{k,\phi}(\x) = \begin{bmatrix}
      t_1^{(1)} (\x - \sigma_1) & -t_2^{(1)} (\x - \sigma_2) \\
      & \ddots & \ddots \\
      & & t_{k-1}^{(1)} (\x- \sigma_{k-1}) & -t_k^{(1)} (\x - \sigma_k) \\
    \end{bmatrix}
  \]
  and $L_{k,\psi}(\x)$ can be defined in an analogous way. 
  
  \begin{lemma}
    The matrix $L_{k, \phi}(\x)$ defined above is a dual minimal 
    basis for the
    Lagrange basis $\{ \phi_i \}$ constructed on the nodes
    $\sigma_1, \ldots, \sigma_k$ (that is, it is dual to $\pi_{k,\phi}(\x)$). 
  \end{lemma}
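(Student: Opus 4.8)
The plan is to verify in turn the three requirements that make $L_{k,\phi}(\x)$ a dual minimal basis of $\pi_{k,\phi}(\x)$: the annihilation relation $L_{k,\phi}(\x)\pi_{k,\phi}(\x)=0$, the pointwise full row rank of $L_{k,\phi}(\x)$, and minimality (via the leading-coefficient criterion recalled above from \cite{forney1975minimal}). Everything will follow from a single identity about the Lagrange polynomials.

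First I would record that identity. Writing $\omega(\x) := \prod_{j}(\x-\sigma_j)$ for the node polynomial, one has, for every index $i$,
\[
  t_i\,(\x-\sigma_i)\,l_i(\x) = t_i\,(\x-\sigma_i)\,\frac{1}{t_i}\prod_{j\neq i}(\x-\sigma_j) = \prod_{j}(\x-\sigma_j) = \omega(\x),
\]
so that the quantity $t_i(\x-\sigma_i)l_i(\x)$ is independent of $i$. Since each row of $L_{k,\phi}(\x)$ pairs the coefficient $t_i(\x-\sigma_i)$ against $l_i$ with the coefficient $-t_{i+1}(\x-\sigma_{i+1})$ against $l_{i+1}$, multiplying such a row by $\pi_{k,\phi}(\x)$ produces $\omega(\x)-\omega(\x)=0$. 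This telescoping establishes $L_{k,\phi}(\x)\pi_{k,\phi}(\x)=0$, and it also supplies the no-common-divisor hypothesis of Theorem~\ref{thm:twobases}: the partition-of-unity relation $\sum_i l_i(\x)\equiv 1$ gives a constant vector $w$ with $\pi_{k,\phi}(\x)^T w = 1$.

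Next I would prove full row rank for every $\x\in\mathbb F$ by analysing left null vectors. Setting $a_i := t_i(\x-\sigma_i)$, the matrix is bidiagonal, so a relation $c^T L_{k,\phi}(\x)=0$ reads $c_1 a_1 = 0$ (first column), $a_i(c_i-c_{i-1})=0$ (interior columns), and $c_{k-1}a_k=0$ (last column). The structural fact I would exploit is that the nodes are pairwise distinct, so at most one $a_i$ can vanish at a fixed $\x$; the two boundary relations then force $c$ to vanish at the appropriate end, while the interior relations propagate $c_i=c_{i-1}$ across the unique vanishing position, giving $c=0$. I would treat the generic case (no $a_i$ vanishing) and the cases $\x=\sigma_m$ uniformly by this propagation argument. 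This is the step requiring the only genuine care, and I expect it to be the main obstacle, since it is precisely at the nodes that a diagonal entry drops and one must rely on the boundary columns to pin the null vector down.

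Finally I would check minimality through the leading-coefficient characterisation. Each entry of $L_{k,\phi}(\x)$ is a genuine degree-one polynomial with leading coefficient $\pm t_i\neq 0$, so the matrix of highest-degree row coefficients is again bidiagonal with nonzero diagonal; deleting its last column leaves an upper-triangular matrix with diagonal $t_1,\dots,t_{k-1}$, hence it has full row rank. Likewise every $l_i(\x)$ has exact degree $k-1$ with leading coefficient $1/t_i\neq 0$, so the row of highest-degree coefficients of $\pi_{k,\phi}(\x)$ is nonzero and thus of full row rank. Combined with the pointwise full rank of $L_{k,\phi}(\x)$ established in the previous step and the trivial pointwise full rank of the single row $\pi_{k,\phi}(\x)^T$ (again guaranteed by $\sum_i l_i\equiv 1$), the criterion recalled from \cite{forney1975minimal} yields that $L_{k,\phi}(\x)$ and $\pi_{k,\phi}(\x)^T$ form a dual minimal pair, which is the assertion.
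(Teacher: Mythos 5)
Your proof is correct, and its first and third steps (the telescoping identity $t_i(\x-\sigma_i)l_i(\x)=\omega(\x)$ giving the annihilation, and the bidiagonal leading-coefficient matrix giving minimality) coincide with the paper's argument. Where you genuinely diverge is the full-row-rank step: the paper proves it by exhibiting invertible square submatrices --- for $\x$ away from the nodes the first columns form an upper-triangular matrix with nonzero diagonal, and for $\x=\sigma_m$ it deletes the (entirely zero) $m$-th column and observes that what remains is block diagonal with an invertible upper-triangular and an invertible lower-triangular block. You instead show that the only left null vector is zero: writing $a_i=t_i(\x-\sigma_i)$, the conditions $c_1a_1=0$, $a_j(c_j-c_{j-1})=0$, $c_{k-1}a_k=0$, together with the fact that at most one $a_i$ vanishes (pairwise distinct nodes), pin down $c=0$ by propagation from the two boundary columns. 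Your phrasing ``propagate across the unique vanishing position'' is slightly loose --- the vanishing position actually splits the chain into two segments, each of which is annihilated by its own boundary relation --- but the mechanism is sound in all cases, including $m=1$ and $m=k$ where one boundary relation is vacuous. The two arguments are of comparable length; the paper's gives an explicit certificate (an invertible minor), while yours treats the generic and node cases more uniformly. A final point in your favor: Forney's characterization of dual minimal bases requires full pointwise rank and full leading-coefficient rank of \emph{both} matrices in the pair, and you verify the (easy) conditions on $\pi_{k,\phi}(\x)^T$ via the partition of unity $\sum_i l_i\equiv 1$, which the paper leaves implicit.
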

  
  \begin{proof}
    It is easy to verify that $L_{k, \phi} \in \mathbb{C}[\x]^{k \times (k+1)}$
    and 
        \[
          L_{k,\phi}(\x) \pi_{k,\phi}(\x) = 0, \qquad 
          \pi_{k,\phi}(\x) := \begin{bmatrix}
            l_{k}^{(1)}(\x) \\
            \vdots \\
            l_{0}^{(1)}(\x) \\
          \end{bmatrix}. 
        \]
     It remains to show that the matrix $L_{k, \phi}(\x)$ has full row rank
     for any $\x \in \mathbb F$. 
     For all values of $\x$ that are not equal to the nodes
     the first $k$ columns are upper triangular with non-zero elements
     on the diagonal, and so the hypotheses is satisfied. It remains
     to deal with the cases where $\x = \sigma_i$ for some $i = 1, \ldots, k - 1$. 
     
     We note that in this case one of the columns of the matrix is zero, but removing it yields a square matrix which is block diagonal with only
     two diagonal blocks. The top-left one is upper triangular and invertible, 
     while the bottom-right one is lower triangular and invertible, since
     they both have nonzero elements on the diagonal.
     
     Notice that the first $k$ columns of the leading coefficient 
     of $L_{k,\phi}$ are upper triangular with nonzero elements on the diagonal. 
     This implies that the leading coefficient has full row rank, thus
     proving the minimality of $L_{k,\phi}(\x)$. 
  \end{proof}
    
  \subsection{Constructing a classical Lagrange linearization}
    \label{sec:lagrange-linearization}
    
  Besides building linearizations for polynomial
  expressed in product families of Lagrange bases, 
  the above formulation can be used to linearize a polynomial expressed
  in a Lagrange basis built on the union of the nodes. 
  
  In fact, we observe that if we have two Lagrange polynomials
  $l_i^{(1)}(\x)$ and $l_j^{(2)}(\x)$ defined according
  to the previous notation then their product
  is almost a Lagrange polynomial for the union of the nodes. 
  More
  precisely, assume that we have a set of nodes 
  $\sigma_1, \ldots, \sigma_n$ and let $l_i^{(1)}(\x)$ and $l_j^{(2)}(\x)$
  be Lagrange polynomials relative to the nodes $\sigma_1, \ldots, \sigma_{k}$
  and $\sigma_{k+1}, \ldots, \sigma_n$, respectively. 
  Then if $l_i(\x)$ are the
  Lagrange polynomials related to all the nodes we have that 
  \[
    l_i(\x) = \begin{cases}
      l_i^{(1)}(\x) \cdot l_{j}^{(2)}(\x) \cdot \frac{\x - \sigma_{j+k}}{\sigma_i - \sigma_{j+k}} \prod_{s \neq j} \frac{\sigma_{j+k} - \sigma_s}{\sigma_i - \sigma_{s+k}} & 
      i \leq k \\ 
      l_j^{(1)}(\x) \cdot l_{i-k_1}^{(2)}(\x) \cdot \frac{\x - \sigma_{j}}{\sigma_i - \sigma_{j}} \prod_{s \neq j} \frac{\sigma_{j} - \sigma_s}{\sigma_i - \sigma_{s}} & i > k \\ 
    \end{cases}. 
  \]
  It is worth noting that these formulas become much more straightforward
  if one considers unscaled Lagrange polynomials by getting rid of
  the normalization factor, since in that case we obtain: 
  \[
    l_i(\x) = \begin{cases}
      l_i^{(1)}(\x) \cdot l_{j}^{(2)}(\x) \cdot (\x - \sigma_{j+k}) & 
      i \leq k \\ 
      l_j^{(1)}(\x) \cdot l_{i-k_1}^{(2)}(\x) \cdot (\x - \sigma_{j})
      & i > k \\ 
    \end{cases}. 
  \]
  The part missing from the product of two Lagrange polynomials
  in order to obtain the one with the union of the nodes is always 
  linear and so can be placed as a coefficient in the top-left
  matrix polynomial $\x M_1 + M_0$. 
%
%
  \begin{remark}
    Notice that it is possible to choose two equal nodes in 
    $\sigma_1, \ldots, \sigma_k$ and $\sigma_{k+1}, \ldots, \sigma_n$. 
    This allows to obtain a Lagrange linearization with repeated nodes, 
    which is a special case of Hermite linearization, where it
    is possible to interpolate a polynomial imposing the value
    of its first derivative at the nodes. By using the 
    product dual bases it is possible to extended this construction
    to higher order derivatives. However, such a construction would have
    redundancy in the polynomial family, thus leading to linearizations
    which has infinite eigenvalues.     
    In Section~\ref{sec:hermitebasis} we present a direct 
    construction of the dual basis for the Hermite basis that does 
    not.
  \end{remark}
  
  \subsection{Explicit construction for the Newton basis}
    \label{sec:newtonbasis}
  
  Another concrete example is the construction of the 
  Newton basis linearization. We can consider, similarly to the Lagrange
  case, a set of nodes $\sigma_1, \ldots, \sigma_n$ and assume to have
  two Newton bases, one built using $\sigma_1, \ldots, \sigma_{k}$,
  and the other built using $\sigma_{k+1}, \ldots, \sigma_n$. 
  
  To construct the linearization we need to find 
  $L_{k,\phi}(\x)$ which satisfies the requirements of Theorem~\ref{thm:twobases}. 
  A possible choice is given by the following 
  \[
    L_{k,\phi}(\x) := \begin{bmatrix}
      1 & \sigma_{k} - \x \\
      & \ddots & \ddots \\
      & & 1 & \sigma_1 - \x \\
    \end{bmatrix}, \qquad 
    \pi_{k, \phi}(\x) = \begin{bmatrix}
      \prod_{j = 1}^k (\x - \sigma_j) \\
      \vdots \\
      \x - \sigma_1 \\
      1 \\
    \end{bmatrix}. 
  \]
  The matrix $L_{k,\phi}(\x)$ 
  has the right dimensions $k \times (k+1)$, 
  full row-rank for any $\x$, and is such that
  the product 
  $L_{k,\phi}(\x) \pi_{k,\phi}(\x) = 0$. Moreover, the leading
  coefficient has full row rank so we also have the minimality
  and all the hypotheses of Theorem~\ref{thm:twobases} are satisfied. 
  
  \subsection{Linearizations in the Hermite basis}
    \label{sec:hermitebasis}
    
  Recently a linearization for polynomials expressed in the Hermite
  basis has been presented by Fassbender, P\'erez and Shayanfar 
  in \cite{heike2015sparse}. 
  
  The Hermite basis can be seen as a generalization of the Lagrange
  basis where not only the values of the functions at the nodes are
  considered, but also the values of their derivatives. 
  
  Assume that we have a set of nodes $\sigma_1, \ldots, \sigma_n$, and 
  that we have interpolated a function assigning the derivative up to the
  $s$-order, for some $s \geq 1$ (the case $s = 1$ gives the Lagrange
  basis). The order $s$ can also vary depending on the node.
  We can then consider the basis given by 
  the following vector polynomial:
  \[
    \pi_{k,\phi}(\x) = \begin{bmatrix}
      \frac{\omega(\x)}{(\x - \sigma_1)^{s_1}} \\
      \vdots \\
      \frac{\omega(\x)}{(\x - \sigma_1)} \\
      \vdots \\
      \frac{\omega(\x)}{(\x - \sigma_n)^{s_n}} \\
      \vdots \\
      \frac{\omega(\x)}{(\x - \sigma_n)} \\      
    \end{bmatrix}, \qquad 
    \omega(\x) := \prod_{j = 1}^n (\x - \sigma_j)^{s_j}, 
    \qquad k = \sum_{j = 1}^n s_j. 
  \]
  A generic polynomial expressed in this basis can be written as 
  $p(\x) = p^T \pi_{k,\phi}(\x)$ where $p$ is the 
  column vector with the coefficients
  in the Hermite basis. 
  
  We want to show that it is possible to formulate a linearization for the
  Hermite basis in our framework. We already have the vector $\pi_{k,\phi}(\x)$
  so we simply need to find a linear matrix polynomial $L_{k,\phi}(\x)$
  of the correct dimension
  that has full row rank and such that $L_{k,\phi}(\x) \pi_{k,\phi}(\x) = 0$. 
  
  \begin{lemma} \label{lem:hermite-l}  
  The matrix polynomial $L_{k,\phi}(\x)$ defined as follows
  \[
    L_{k,\phi}(\x) = \begin{bmatrix}
      J_{\sigma_1}(\x) & - (\x - \sigma_2) e_{s_1} e_{s_2}^T \\
      & \ddots & \ddots \\
      & & J_{\sigma_{n-1}}(\x) & - (\x - \sigma_n) e_{s_{n-1}} e_{s_n}^T \\
      & & & \tilde J_{\sigma_n}(\x) \\
    \end{bmatrix}, 
    \]
    with
  \[
    J_{\sigma_j}(\x) := \begin{bmatrix}
      \x - \sigma_j & -1 \\
      & \ddots & \ddots \\
      & & \ddots & -1 \\
      & & & \x - \sigma_j  \\
    \end{bmatrix}, \quad 
    \tilde J_{\sigma_j}(\x) := \begin{bmatrix}
          \x - \sigma_j & -1 \\
          & \ddots & \ddots \\
          & & \x - \sigma_j & -1 \\
        \end{bmatrix},
  \]
  is a dual basis for the Hermite basis 
  $\{ \phi_i \}$ of orders $s_i$, $i = 0, \ldots, n$. 
  \end{lemma}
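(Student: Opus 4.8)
The plan is to verify the two conditions defining a (full row-rank linear) dual basis: that $L_{k,\phi}(\x)\pi_{k,\phi}(\x)=0$ and that $L_{k,\phi}(\x)$ has full row rank for every $\x\in\mathbb F$. I would first partition both objects by node. Write $\pi_{k,\phi}(\x)=[v_1(\x)^T,\ldots,v_n(\x)^T]^T$, where the block $v_j(\x)\in\mathbb F[\x]^{s_j}$ gathers the entries $\omega(\x)/(\x-\sigma_j)^m$ for $m=s_j,\ldots,1$; setting $\omega_j(\x):=\omega(\x)/(\x-\sigma_j)^{s_j}$ this reads $(v_j)_i=\omega_j(\x)(\x-\sigma_j)^{i-1}$. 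Correspondingly $L_{k,\phi}(\x)$ is block upper bidiagonal with diagonal blocks $J_{\sigma_1}(\x),\ldots,J_{\sigma_{n-1}}(\x),\tilde J_{\sigma_n}(\x)$ and super-diagonal blocks $-(\x-\sigma_{j+1})e_{s_j}e_{s_{j+1}}^T$.

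For the first condition the computation is local to each block row. A direct check gives $J_{\sigma_j}(\x)v_j(\x)=\omega(\x)e_{s_j}$: the first $s_j-1$ rows telescope to zero since $(\x-\sigma_j)(v_j)_i-(v_j)_{i+1}=0$, while the last row produces $(\x-\sigma_j)(v_j)_{s_j}=\omega(\x)$. The off-diagonal block contributes $-(\x-\sigma_{j+1})e_{s_j}e_{s_{j+1}}^Tv_{j+1}(\x)=-\omega(\x)e_{s_j}$, because $e_{s_{j+1}}^Tv_{j+1}(\x)=(v_{j+1})_{s_{j+1}}=\omega_{j+1}(\x)(\x-\sigma_{j+1})^{s_{j+1}-1}$. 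These cancel in each block row $j\le n-1$, and the last block row gives $\tilde J_{\sigma_n}(\x)v_n(\x)=0$ because $\tilde J_{\sigma_n}(\x)$ consists exactly of the telescoping first $s_n-1$ rows of $J_{\sigma_n}(\x)$. Hence $L_{k,\phi}(\x)\pi_{k,\phi}(\x)=0$.

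For the rank, rather than hunting for an invertible minor as in the Lagrange case, I would prove directly that the right kernel of $L_{k,\phi}(\x)$ is one-dimensional for every $\x$, which for a $(k-1)\times k$ pencil is equivalent to full row rank. Solving $L_{k,\phi}(\x)v=0$ from the bottom block upward, the matrix $\tilde J_{\sigma_n}(\x)$ has full row rank for all $\x$ (deleting its first column leaves a lower bidiagonal matrix with $-1$ on the diagonal), so its kernel is the single line spanned by $(1,(\x-\sigma_n),\ldots,(\x-\sigma_n)^{s_n-1})^T$, fixing $v_n$ up to a scalar $c_n$. Each earlier block satisfies $J_{\sigma_j}(\x)v_j=(\x-\sigma_{j+1})(v_{j+1})_{s_{j+1}}e_{s_j}$; whenever $\x\neq\sigma_j$ the matrix $J_{\sigma_j}(\x)$ is invertible and $v_j$ is then determined by $c_n$, so no extra freedom appears.

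The hard part is a node $\x=\sigma_p$ with $p<n$, where $J_{\sigma_p}(\x)$ is singular. The crucial point is that the nodes are distinct, so $\x$ meets at most one of them: all blocks below $p$ are in the invertible case and $(v_{p+1})_{s_{p+1}}$ is a nonzero multiple of $c_n$. Since the last row of $J_{\sigma_p}(\sigma_p)$ is identically zero, the last component of $J_{\sigma_p}(\sigma_p)v_p=(\sigma_p-\sigma_{p+1})(v_{p+1})_{s_{p+1}}e_{s_p}$ reads $0=(\sigma_p-\sigma_{p+1})(v_{p+1})_{s_{p+1}}$, which forces $c_n=0$ and hence $v_{p+1}=\cdots=v_n=0$. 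The homogeneous system then leaves exactly the one-dimensional kernel of the nilpotent $J_{\sigma_p}(\sigma_p)$, and the vanishing factor $(\x-\sigma_p)=0$ in the forcing term feeding block $p-1$ propagates zeros up to $v_1$; so the kernel is again a single line. This settles full row rank for all $\x$. Finally, reading off the coefficient of $\x$ shows that the first $k-1$ columns of the leading coefficient form an upper triangular matrix with unit diagonal, so $L_{k,\phi}(\x)$ is in fact minimal, as needed for a strong linearization.
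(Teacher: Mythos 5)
Your proof is correct, and its core, the rank argument, takes a genuinely different route from the paper's. The duality identity is treated the same way in spirit: the paper merely asserts that $L_{k,\phi}(\x)\pi_{k,\phi}(\x)=0$ ``can be checked directly,'' and your block computation ($J_{\sigma_j}(\x)v_j(\x)=\omega(\x)e_{s_j}$, cancelled by the coupling term $-(\x-\sigma_{j+1})e_{s_j}e_{s_{j+1}}^Tv_{j+1}(\x)=-\omega(\x)e_{s_j}$, with the last block row telescoping to zero) is precisely that check written out. For full row rank, the paper also disposes of the case where $\x$ equals no node by triangularity, but at a node it deletes the column through the singular block (taking $j=1$ ``for simplicity''), and then proves the remaining square matrix invertible by block column operations and a determinant evaluation that reduces to a product of differences of distinct nodes. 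You instead invoke rank–nullity (full row rank of the $(k-1)\times k$ pencil is equivalent to the kernel being a single line) and solve the block upper bidiagonal system bottom-up: away from the singular block everything is forced by the single scalar $c_n$; at $\x=\sigma_p$ with $p<n$ the identically zero last row of $J_{\sigma_p}(\sigma_p)$ forces $c_n=0$ and hence kills every block below $p$, the vanishing factor $\x-\sigma_p$ in the coupling term kills every block above $p$, and what survives is exactly $\ker J_{\sigma_p}(\sigma_p)=\operatorname{span}\{e_1\}$ --- consistent with the kernel being spanned by $\pi_{k,\phi}(\sigma_p)$, whose only nonzero entry at a node sits in that position. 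Your approach buys three things: it handles an arbitrary singular node uniformly (the paper's reduction to $j=1$ glosses over the fact that the deleted-column matrix has a different shape depending on where the singular node sits, and its determinant formula as printed has index slips), it replaces the least transparent step of the paper by an elementary substitution argument, and it identifies the kernel explicitly. The paper's approach, in exchange, produces a concrete invertible $(k-1)\times(k-1)$ minor with a closed-form determinant. Finally, your observation that the first $k-1$ columns of the leading coefficient form a unit-diagonal upper triangular matrix establishes minimality, which the lemma does not claim but which is exactly what Theorem~\ref{thm:twobases} needs to yield a strong linearization; that is a worthwhile addition.
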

  
  \begin{proof}
  We can check directly that 
  $L_{k, \phi}(\x) \pi_{k, \phi}(\x) = 0$, and so 
  it only remains to verify that the row rank is maximum. We notice that for
  any $\x \neq \sigma_j$ the matrix is upper triangular with non zero
  elements on the diagonal and so the condition is obviously satisfied. 
  For $\x = \sigma_j$ then the diagonal block $J_{\sigma_j}(\x)$ 
  is singular.    
  Assume, for simplicity, that $j = 1$, and  
  consider the matrix $S$ obtained by removing the first column
  of $L_{k,\phi}(\x)$. We notice that $S$
  has the following structure: 
  \[
    S := \begin{bmatrix}
      - I \\
      0_{s_1 - 1}^T & - (\sigma_1 - \sigma_2) e_{s_2}^T \\
      & J_{\sigma_2}(\sigma_1) & - (\sigma_1 - \sigma_3) e_{s_2} e_{s_3}^T \\
      && \ddots & \ddots \\
      & & & J_{\sigma_{n-1}}(\sigma_1) & - (\sigma_1 - \sigma_n) e_{s_{n-1}} e_{s_n}^T \\
      & & & & \tilde J_{\sigma_n}(\sigma_1) \\
    \end{bmatrix}
  \]
  To prove that $S$ is invertible we consider
  the trailing submatrix $\tilde S$ obtained by removing the 
  first block row and column. We can transform $\tilde S$
  by means of block column operations so that 
  \[
    \tilde S X = \begin{bmatrix}
      \tilde u^T & \sigma_1 - \sigma_n \\
      B(\sigma_1) & - e_{k - \sigma_1 - 1} \\
    \end{bmatrix}, \qquad 
    u(\x) = \begin{bmatrix}
      - (\sigma_1 - \sigma_2) e_{s_2} \\
      \vdots \\
      - (\sigma_1 - \sigma_{n-1}) e_{s_n} \\
      0_{s_{n} - 1} \\ 
    \end{bmatrix}
  \]
  and $B(\sigma_1)$ is block diagonal with the $J_{\sigma_j}(\sigma_1)$ of size $s_j$
  on the block diagonal, except the last one which is of size $s_{n}-1$. 
  Since $u^T B(\sigma_1)^{-1} e_{k - \sigma_1 - 1} = 0$ we can write
  \begin{align*}
    \det S\tilde X &= \det B(\sigma_1) \cdot \left[ 
      (\sigma_1 - \sigma_n) + \tilde u^T B(\sigma_1)^{-1}
      e_{k-\sigma_1-1} 
      \right]= \\
      &= \prod_{j = 1}^{n-1} (\sigma_1 - \sigma_j)^{s_j}
      \cdot (\sigma_1 - \sigma_n)^{s_n - 1} (\sigma_1 - \sigma_n) = \prod_{j = 1}^n (\sigma_1 - \sigma_j)^{s_j} \neq 0. 
  \end{align*}
  This proves that $S\tilde X$ is invertible, 
  concluding the proof. 
  \end{proof}
  
  The above lemma guarantees the applicability 
  of Theorem~\ref{thm:twobases} for the case of
  Hermite polynomials (and matrix polynomials), so we 
  have an explicit way of building linearizations
  in this basis. 
  
  \subsection{Bernstein basis}
  
  A last example that is relevant in the context of computer aided design
  is the Bernstein basis, which is the building block
  of B\'ezier curves \cite{bini2005structured,farin2002history,farin2014curves,farouki2003construction}. Given an interval $[\alpha, \beta]$, we can define
  the family of Bernstein polynomials of degree $k$
  as follows: 
  \[
    \phi_i(\x) := \binom{k}{i} (\x - \alpha)^i(\beta - \x)^{k-i}, \qquad 
    i = 0, \ldots, n
  \]
  We show that also these polynomials fit in our
  construction. 
  
  \begin{lemma} \label{lem:bernstein-recurrence}
    The linear matrix polynomial $L_{k, \phi}(\x)$ defined as follows
    \[
      L_{k, \phi}(\x) := \begin{bmatrix}
        \binom{k}{k - 1} (\x - \beta) & \binom{k}{k} (\x - \alpha) \\ 
        & \ddots & \ddots \\
        && \binom{k}{0} (\x - \beta) & \binom{k}{1} (\x - \alpha) \\
      \end{bmatrix}
    \]
    is a dual minimal basis for the Bernstein polynomials of degree $k$
    defined above. 
  \end{lemma}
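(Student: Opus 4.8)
The plan is to verify in turn the three defining properties of a dual minimal basis, exactly as in the Lagrange, Newton and recurrence cases treated above: the annihilation relation $L_{k,\phi}(\x)\pi_{k,\phi}(\x) = 0$, the full row rank of $L_{k,\phi}(\x)$ for every $\x$, and the full row rank of its leading coefficient.

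First I would check the annihilation relation row by row. The $r$-th row of $L_{k,\phi}(\x)$ pairs the coefficient $\binom{k}{k-r}(\x-\beta)$ against the entry $\phi_{k-r+1}(\x)$ of $\pi_{k,\phi}(\x)$ and the coefficient $\binom{k}{k-r+1}(\x-\alpha)$ against $\phi_{k-r}(\x)$; writing $i = k-r$ the claim reduces to the identity
\[
\binom{k}{i}(\x-\beta)\,\phi_{i+1}(\x) + \binom{k}{i+1}(\x-\alpha)\,\phi_i(\x) = 0, \qquad i = 0, \ldots, k-1.
\]
Substituting $\phi_i(\x) = \binom{k}{i}(\x-\alpha)^i(\beta-\x)^{k-i}$ and using $(\x-\beta) = -(\beta-\x)$, both terms collapse to the common monomial $\binom{k}{i}\binom{k}{i+1}(\x-\alpha)^{i+1}(\beta-\x)^{k-i}$ with opposite signs, so their sum vanishes. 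This is a routine computation.

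Next I would establish full row rank for all $\x$. The matrix $L_{k,\phi}(\x)$ is upper bidiagonal: its first $k$ columns form an upper triangular matrix whose diagonal entries are $\binom{k}{k-r}(\x-\beta)$, so its determinant is a nonzero multiple of $(\x-\beta)^k$, while its last $k$ columns form a lower triangular matrix whose determinant is a nonzero multiple of $(\x-\alpha)^k$. Since $\alpha \neq \beta$ on a genuine interval, for every $\x$ at least one of these two $k \times k$ submatrices is nonsingular, and hence $L_{k,\phi}(\x)$ has full row rank everywhere. Equivalently, at the two exceptional points $\x = \alpha$ and $\x = \beta$ exactly one of the two entries in each row vanishes, leaving a single nonzero entry per row in distinct columns.

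Finally, for minimality I would examine the leading coefficient. Each row is linear in $\x$, and its degree-one part places $\binom{k}{k-r}$ in column $r$ and $\binom{k}{k-r+1}$ in column $r+1$; the first $k$ columns of this constant matrix are upper triangular with nonzero binomial entries on the diagonal, so the leading coefficient has full row rank. Invoking the characterization of minimal bases recalled from \cite{forney1975minimal}, $L_{k,\phi}(\x)$ is then a minimal basis, and since it is dual to $\pi_{k,\phi}(\x)$ it is a dual minimal basis. The only step requiring a little care is the rank argument at the nodes $\x = \alpha$ and $\x = \beta$, but this is immediate once one observes that exactly one of the two linear factors in each row survives there, so no genuine obstacle arises.
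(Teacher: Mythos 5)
Your proposal is correct and follows essentially the same route as the paper's proof: a direct verification of $L_{k,\phi}(\x)\pi_{k,\phi}(\x)=0$, full row rank via the upper-triangular first $k$ columns (nonsingular for $\x\neq\beta$) and lower-triangular last $k$ columns (nonsingular for $\x\neq\alpha$), and minimality from the triangular structure of the leading coefficient. The only difference is that you spell out the row-by-row cancellation identity explicitly, which the paper leaves as ``a direct computation.''
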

  
  \begin{proof}
    A direct computation shows that $L_{k, \phi}(\x) \pi_{k, \phi}(\x) = 0$. 
    Moreover, notice that for any $\x \neq \beta$ the first $k$
    columns of $L_{k, \phi}(\x)$ form a square upper triangular matrix
    with non-zero diagonal elements, and for any $\x \neq \alpha$
    the last $k$ columns are an invertible lower triangular matrix. 
    This guarantees that the row rank is maximum for any $\x \in \mathbb F$.
    Since the leading coefficient has the first $k$ columns which are
    upper triangular and invertible we also have the minimality.
  \end{proof}
    
  \section{Linearizing sums of polynomials and rational functions}
    \label{sec:sums}
    
    \subsection{Linearizing the sum of two polynomials}
    \label{sec:polynomialsum}
    
    In this section we present another example of linearization
    which deals with the following problem: assume that we are given two
    polynomials $p(\x)$ and $q(\x)$ of which we want to find the intersections, 
    that is the values of $\x$ such that $q(\x) = p(\x)$, and assume that 
    $p(\x)$ and $q(\x)$ are expressed in different bases. 
    
    Normally one would solve the problem by considering the 
    polynomial $r(\x) = p(\x) - q(\x)$ and finding its roots, for example, 
    by using a linearization. However, this requires to perform a change of basis
    on at least one of the two polynomials, and this operation is possibly
    ill-conditioned (see \cite{gautschi1983condition} for a related
    analysis). 
    
    In the case of interpolation bases, such as Newton or Lagrange, this could
    be also useful when one wants to find intersections of functions 
    that have been sampled in different data points. In this case it might
    even
    not be possible to resample the function (think of measured data). 
    
    Here we show how to linearize the problem directly. 
    
    \begin{theorem} \label{thm:polynomialsum}
    Let $p(\x)$
    and $q(\x)$ be two polynomials of the following 
    form:
    \[
      p(\x) := \sum_{j = 0}^\epsilon p_j \phi_j(\x), \qquad 
      q(\x) := \sum_{j = 0}^\eta q_j \psi_j(\x). 
    \]
    and let $L_{\epsilon, \phi}(\x)$ and $L_{\eta, \psi}(\x)$ be
    dual bases for $\{ \phi_i \}$ and 
    $\{ \psi_i \}$. Then the matrix polynomial 
      \[
        \mathcal L(\x) := \begin{bmatrix}
          p w_{\psi}^T - w_{\phi} q^T & L_{\epsilon, \phi}^T(\x) \\
          L_{\eta, \psi}(\x) & 0 \\
        \end{bmatrix}, \qquad 
        w_{\star} := \Gamma_\star^{-1}(1), \qquad 
        \star \in \{ \phi, \psi \}
      \]
      where $\Gamma_\star^{-1}(1)$ is the vector of the coefficients of the 
      constant $1$ in the basis $\star$ (see Remark~\ref{rem:basis-map}), 
      is a linearization for $r(\x) := p(\x) - q(\x)$. 
    \end{theorem}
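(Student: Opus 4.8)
The plan is to recognize $\mathcal L(\x)$ as a direct instance of the construction in Theorem~\ref{thm:twobases}. Since $p$, $q$ and the $w_\star$ are constant vectors, the top-left block $p w_\psi^T - w_\phi q^T$ is a constant matrix, so I would set $M_1 = 0$ and $M_0 := p w_\psi^T - w_\phi q^T$, giving $\x M_1 + M_0 = M_0$. Because we are dealing with scalar (not matrix) polynomials the identity blocks $I$ in Theorem~\ref{thm:twobases} are $1 \times 1$, and matching the off-diagonal blocks forces the identification of the dual basis $L_{\epsilon,\phi}(\x)$ (appearing transposed in the top-right corner) and $L_{\eta,\psi}(\x)$ (in the bottom-left corner) with the two dual bases required by that theorem.

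The second step is to check the hypotheses of Theorem~\ref{thm:twobases}. The matrices $L_{\epsilon,\phi}(\x)$ and $L_{\eta,\psi}(\x)$ are dual bases by assumption, so the only remaining point is the no-common-divisor condition, namely the existence of vectors $w_{\epsilon,\phi}$ and $w_{\eta,\psi}$ with $\pi_{\epsilon,\phi}(\x)^T w_{\epsilon,\phi} = 1$ and $\pi_{\eta,\psi}(\x)^T w_{\eta,\psi} = 1$. This is exactly the content of $w_\phi = \Gamma_\phi^{-1}(1)$ and $w_\psi = \Gamma_\psi^{-1}(1)$: by Remark~\ref{rem:basis-map} these are the coordinate vectors of the constant $1$, which exist because $\{\phi_i\}$ and $\{\psi_i\}$ are bases (so $1$ lies in their span), and by construction they satisfy $w_\phi^T \pi_{\epsilon,\phi}(\x) = 1$ and $w_\psi^T \pi_{\eta,\psi}(\x) = 1$.

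With the hypotheses in place, Theorem~\ref{thm:twobases} guarantees that $\mathcal L(\x)$ linearizes the polynomial $P(\x) = \pi_{\epsilon,\phi}(\x)^T M_0 \, \pi_{\eta,\psi}(\x)$. The final step is the purely algebraic computation identifying $P(\x)$ with $r(\x)$: expanding, $P(\x) = \big(\pi_{\epsilon,\phi}(\x)^T p\big)\big(w_\psi^T \pi_{\eta,\psi}(\x)\big) - \big(\pi_{\epsilon,\phi}(\x)^T w_\phi\big)\big(q^T \pi_{\eta,\psi}(\x)\big)$, and each bracket collapses, using $p^T \pi_{\epsilon,\phi}(\x) = p(\x)$, $q^T \pi_{\eta,\psi}(\x) = q(\x)$ and the two normalizations above, to yield $P(\x) = p(\x)\cdot 1 - 1 \cdot q(\x) = r(\x)$.

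The computation itself is routine; the one point requiring care — and the only genuine obstacle — is the bookkeeping of the index and label conventions of Theorem~\ref{thm:twobases}, whose roles of $\epsilon,\eta$ and of $\phi,\psi$ are arranged in a transposed fashion relative to the present statement, together with keeping track of transposes when collapsing the scalar bilinear form $\pi_{\epsilon,\phi}(\x)^T M_0 \pi_{\eta,\psi}(\x)$. Note that strongness is \emph{not} claimed here, so no reversal argument is needed.
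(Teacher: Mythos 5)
Your proposal is correct and follows essentially the same route as the paper: both invoke Theorem~\ref{thm:twobases} with $M_1 = 0$ and $M_0 = p w_\psi^T - w_\phi q^T$, use the normalizations $w_\star^T \pi_{k,\star}(\x) = 1$ to satisfy the no-common-divisor hypothesis, and collapse the bilinear form to $p(\x)\cdot 1 - 1\cdot q(\x) = r(\x)$. If anything, your version is more explicit than the paper's, which compresses the hypothesis check and even miscites the key result as Lemma~\ref{lem:recurrence-relation} when it clearly means Theorem~\ref{thm:twobases}.
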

    
    \begin{proof}
      $w_{\star} := \Gamma_\star^{-1}(1)$ means that $w_\star \pi_{k, \star}(\x) = 1$ for $\star \in \{ \phi, \psi \}$ where $k$
      is either $\epsilon$ or $\eta$ depending on the choice. By Lemma~\ref{lem:recurrence-relation} we know that $\mathcal L(\x)$ is a 
      linearization for 
      \[
        \pi_{\epsilon, \phi}^T(\x) (p w_{\psi}^T - w_{\phi} q^T) \pi_{\eta,\psi}(\x)
        = p(\x) \cdot 1 - 1 \cdot q(\x) = r(\x). 
      \]
      This concludes the proof. 
    \end{proof}
    
    \begin{remark}
      We notice that the linearization above, according to Theorem~\ref{thm:twobases}, 
      is a linearization for a polynomial of degree
       $d := \epsilon + \eta$, but
       $r(\x)$ is of degree $\max \{ \epsilon, \eta \} \leq d$. The reason for this
       is that this is actually a linearization for a polynomial of \emph{grade} $d$
       that could have some leading zero coefficients, thus having degree smaller
       than $d$. The grade is defined as the maximum degree of the 
       monomials, while the degree is the maximum of the
       \emph{non-zero} ones. 
       
       The difference between grade and degree cause 
       infinite eigenvalues to appear when we solve
       the eigenvalue problem obtained through Theorem~\ref{thm:polynomialsum}
       numerically. However, the finite eigenvalues that we get
       are still the actual roots of $r(\x)$. 
    \end{remark}
    
    The framework of Section~\ref{sec:morebases} could be used to extend the above
    result to the sum of an arbitrary number of polynomials (possibly all expressed
    in different bases). This can be obtained by combining
    the proof of Theorem~\ref{thm:polynomialsum} with the result of Theorem~\ref{thm:morebases}. 
    
    \begin{numerical-experiment} \label{numexp:chebyshev-monomial}
      In this example we test the framework on the following example. Let 
      $p_1(\x) = \sum_{i = 0}^n p_{i,1} \x^i$ and
      $p_2(\x) = \sum_{i = 0}^n p_{i,2} T_i(\x)$ 
      be two polynomials expressed
      in the monomial and Chebyshev basis of the first kind, respectively. We want
      to find the roots of their sum $q(\x) = p_1(\x) + p_2(\x)$. 
      The columns of Table~\ref{tab:chebyshev-monomial} represent,
      in the following order, 
      the result of these different
      approaches to solve the problem 
      that we tested: 
      \begin{enumerate}
        \item Converting $p_2(\x)$ to the monomial basis and using
          the Frobenius linearization to find the roots of the sum
          (by means of the command \verb|roots| in MATLAB). 
        \item Converting $p_1(\x)$ to the Chebyshev basis and using
          the colleague linearization \cite{good1961colleague,barnett1975companion}
          to find the roots of the sum of $p_1(\x)$ and $p_2(\x)$. The colleague 
          pencil has been solved using the QZ method in MATLAB. 
        \item Constructing the linearization of Theorem~\ref{thm:polynomialsum}
          and solving it with the QZ method (using \verb|eig| in MATLAB). 
        \item Constructing the linearization of Theorem~\ref{thm:polynomialsum}
          and deflating the spurious 
          infinite eigenvalues by means of the strategy
          that will be proposed in Section~\ref{sec:infinity-deflation}\footnote{The approach of Section~\ref{sec:infinity-deflation} has been moved to the end
          of this work because it is fairly general and
          does not add much information about the structure of this
          linearization. Moreover, it can be applied to other examples
          that will follow.}. 
      \end{enumerate}
      The polynomials have also been, by means of symbolical computations, 
      converted to
      the monomial basis and the roots have been computed using 
      \verb|MPSolve|\cite{bini2014solving} to 
      guarantee $16$ accurate digits. These results have been used 
      as a reference to measure the errors, which have been summarized 
      in Table~\ref{tab:chebyshev-monomial}
      and Figure~\ref{fig:chebyshev-monomial}. In
      all the cases the infinite eigenvalues have either been
      deflated a priori, or have been detected by the QZ algorithm
      and so we could deflate them a posteriori, so the numbers that we 
      report refer to the errors on the finite eigenvalues. 
      In particular, we reported the $2$-norm
    of the vectors containing the absolute errors in the computed roots.
    The coefficients of the polynomials have been generated by using the 
    \texttt{randn} function. Each experiment has been repeated $50$ times
    and only the average error is reported. 
    
    The bad results obtained in the cases where a basis conversion has
    been performed can be explained by looking at the conditioning of the
    matrix representing the change of basis between monomial and the
    Chebyshev bases. 
    
    The conditioning is 
    exponentially growing (see \cite{gautschi1983condition} 
    for a related discussion), and 
    as $n$ grows above $50$ it 
    cannot be guaranteed to compute even a single correct digit in double
    precision (see Figure~\ref{fig:changeofbasis-conditioning}, where
    the exponential growth is clearly visible), 
    and so the results start to deteriorate very quickly. 
    
    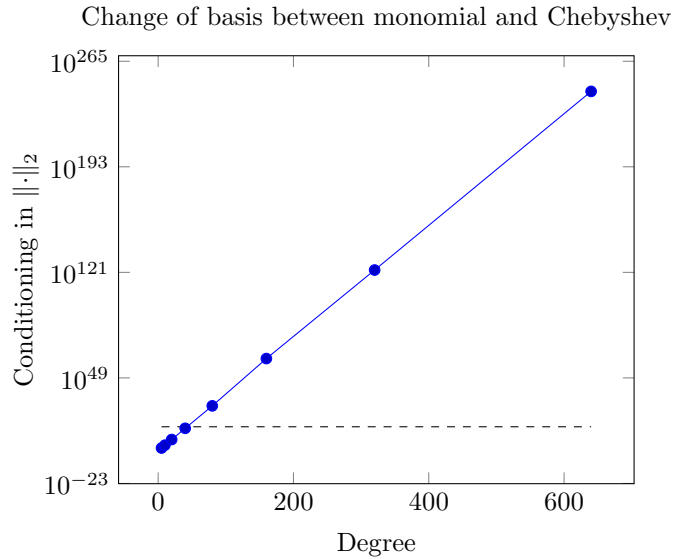
\begin{figure} \begin{center}
      \begin{tikzpicture}
      \begin{semilogyaxis}[xlabel=Degree,
        ylabel=Conditioning in $\norm{\cdot}_2$,
        title=Change of basis between monomial and Chebyshev]
      \addplot table {
         5  1.44e+01 
         10  1.29e+03 
         20  8.80e+06 
         40  4.01e+14 
         80  8.25e+29 
         160  1.48e+62 
         320  3.73e+122 
         640  2.73e+244 
      };
      \addplot[domain=5:640,dashed] {1 / (2.22e-16)};
      \end{semilogyaxis}
      \end{tikzpicture}
      \caption{Conditioning of the change of basis matrix 
        between the monomial and Chebyshev basis. The dashed line
        represents the level $\frac 1 u$, where $u$ is the machine
        precision. Beyond that point no correct digits can be guaranteed
        on the computed coefficients.}
      \label{fig:changeofbasis-conditioning}
    \end{center} \end{figure}

      \begin{table} \begin{center}
        \begin{tabular}{c|cccc}
  		  Degree & Monomial & Chebyshev & Theorem~\ref{thm:polynomialsum} & Theorem~\ref{thm:polynomialsum} + deflation
  		  \\ \hline
 5 & 7.03e-16 & 5.19e-16 & 5.44e-16 & 8.40e-16 \\ 
 10 & 1.23e-14 & 2.07e-15 & 2.00e-15 & 2.33e-15 \\ 
 20 & 1.95e-11 & 4.48e-15 & 2.49e-15 & 4.08e-15 \\ 
 40 & 1.25e-04 & 1.15e-14 & 5.59e-15 & 6.45e-15 \\ 
 80 & 1.29e+00 & 7.62e-09 & 9.76e-15 & 1.69e-14 \\ 
 160 & 4.37e+00 & 1.05e-01 & 6.90e-14 & 3.63e-14 \\ 
 320 & 9.85e+00 & 2.97e+00 & 1.07e-13 & 7.57e-14 \\ 
 640 & 1.91e+01 & 1.52e+01 & 4.40e-13 & 1.27e-13 \\ 
        \end{tabular}
              \caption{Numerical errors in the computation of the (finite)
              roots of the
              polynomial $p_1(\x) + p_2(\x)$ of the numerical experiment \ref{numexp:chebyshev-monomial}. 
              }
              \label{tab:chebyshev-monomial}    
      \end{center} \end{table}  
      
      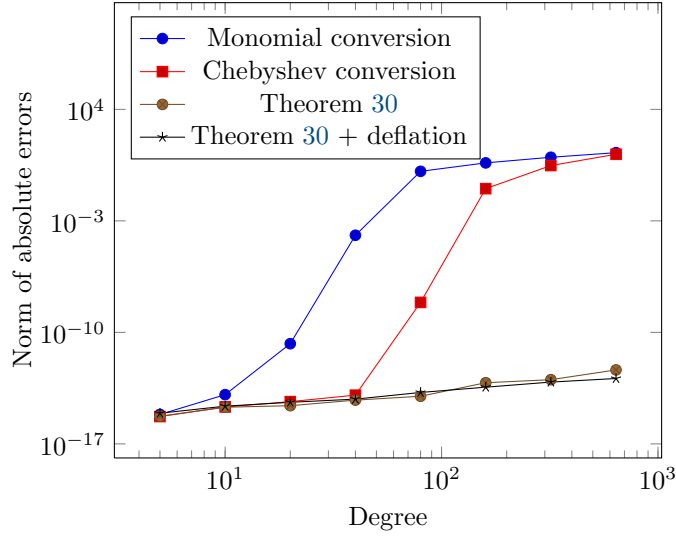
\begin{figure} \begin{center}
        \begin{tikzpicture}
        \begin{loglogaxis}[
           xlabel=Degree,
           ylabel=Norm of absolute errors,
           ymax=5e10,
           width=.7\linewidth,
           legend pos = north west
         ]
        \addplot table {
          5  7.03e-16 
          10  1.23e-14 
          20  1.95e-11 
          40  1.25e-04 
          80  1.29e+00 
          160  4.37e+00 
          320  9.85e+00 
          640  1.91e+01 
        };
        \addplot table {
         5  5.19e-16 
         10  2.07e-15 
         20  4.48e-15 
         40  1.15e-14 
         80  7.62e-09 
         160  1.05e-01 
         320  2.97e+00 
         640  1.52e+01 
        };
        \addplot table {
         5  5.44e-16 
         10  2.00e-15 
         20  2.49e-15 
         40  5.59e-15 
         80  9.76e-15 
         160  6.90e-14 
         320  1.07e-13 
         640  4.40e-13 
        }; 
        \addplot table {
         5  8.40e-16 
         10  2.33e-15 
         20  4.08e-15 
         40  6.45e-15 
         80  1.69e-14 
         160  3.63e-14 
         320  7.57e-14 
         640  1.27e-13 
        };
        \legend{Monomial conversion, Chebyshev conversion, Theorem~\ref{thm:polynomialsum},
        Theorem~\ref{thm:polynomialsum} + deflation}
        \end{loglogaxis}
        \end{tikzpicture}
        \caption{Norm of the absolute errors in the computation
        of the roots of $p_1(\x) + p_2(\x)$, where $p_1(\x)$ is a
        polynomial expressed in the monomial basis while $p_2(\x)$
        is one expresesed in the Chebyshev one.}
        \label{fig:chebyshev-monomial}        
      \end{center}
      \end{figure}
    \end{numerical-experiment}
    
    \subsection{Finding intersections of the sum of two rational functions}
    
    The results of Section~\ref{sec:polynomialsum} admit an interesting extension to finding the zeros of a sum of ratios of polynomials. 
    This has the pleasant side effect of mitigating the numerical
    issues that might be encountered when dealing with a large number
    of infinite eigenvalues.     
    Let $f(\x)$ 
    be a rational function of the form
    \[
      f(\x) := \frac{p(\x)}{q(\x)} + \frac{r(\x)}{s(\x)},
    \]
    with $p(\x), q(\x), r(\x)$, and $s(\x)$ polynomials, 
    of which we want to find the zeros. 
    We assume, in the following, that the numerators do not share
    any common factor with the denominators, and that the two ratios
    do not have common poles, otherwise the roots of the 
    common factors will also be obtained as eigenvalues of the linearization.     
    With this assumption, we have that the roots of $f(\x)$
    are the ones of $f(\x) q(\x) s(\x)$ that is 
    of the polynomial
    \[
      t(\x) := p(\x) s(\x) + r(\x) q(\x). 
    \]
    In this section we will
    linearize the polynomial $t(\x)$. However, 
    for simplicity we will sometimes inappropriately say that a linearization
    for $t(\x)$ is also a linearization for $f(\x)$, since they share the same
    zeros. 
    
    For simplicity we first consider the case in which all the polynomials
    are given in the monomial basis, and we 
    will handle the case where two different bases are used to define
    the polynomials $p(\x), q(\x), r(\x)$ and $s(\x)$ later. 
    
    \begin{theorem} \label{thm:rationalsum}
      Let $f(\x) =  \frac{p(\x)}{q(\x)} + \frac{r(\x)}{s(\x)}$ 
      a rational function obtained as the sum of two rational functions
      expressed in the monomial basis (so that $p(\x), q(\x), r(\x)$ and $s(\x)$
      are all polynomials). Assume that the numerators and the denominators
      do not share any common factor. 
      Then the matrix polynomial
      \[
        \mathcal L(\lambda) = \begin{bmatrix}
          p s^T + q r^T & L_{\epsilon}^T(\x) \\
                    L_{\eta}(\x) & 0 \\
                  \end{bmatrix}
      \]
      is a linearization for $f(\x)$, where $p,q,r$ and $s$
      are the column vectors containing the coefficients of the polynomials
      (padded with some leading zeros if the dimensions do not match)
      and $L_{k}(\x)$ is the dual basis for the monomial basis
      of degree $k$. 
    \end{theorem}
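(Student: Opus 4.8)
The plan is to obtain this statement as a direct specialization of Theorem~\ref{thm:twobases} to the scalar case in which both polynomial families are taken to be monomial bases. First I would set $I = 1$, let $\{ \phi_i \}$ be the monomial basis of degree $\epsilon$ and $\{ \psi_i \}$ the monomial basis of degree $\eta$, and choose the top-left pencil $\x M_1 + M_0$ to be constant, namely $M_1 = 0$ and $M_0 = p s^T + q r^T$. Under this identification $L_\epsilon(\x)$ plays the role of $L_{\eta,\phi}(\x)$ and $L_\eta(\x)$ the role of $L_{\epsilon,\psi}(\x)$, so that the matrix $\mathcal L(\x)$ in the statement has exactly the block form prescribed by Theorem~\ref{thm:twobases}.

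Next I would verify the hypotheses of Theorem~\ref{thm:twobases}. The only nontrivial requirement is that the entries of each monomial vector $\pi_k(\x) = [\x^k, \ldots, \x, 1]^T$ share no common divisor; this is immediate, since the constant entry never vanishes, and one may take $w_k = e_{k+1}$, giving $\pi_k(\x)^T w_k = 1$. The standard monomial dual basis $L_k(\x)$, i.e. the bidiagonal pencil carrying $1$ on the diagonal and $-\x$ on the superdiagonal that already appeared in the Frobenius example, is a full row-rank linear dual basis to $\pi_k(\x)$. Hence Theorem~\ref{thm:twobases} applies and yields a linearization for $P(\x) := \pi_\epsilon(\x)^T (p s^T + q r^T) \pi_\eta(\x)$.

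The remaining step is to identify $P(\x)$ with $t(\x)$. Exploiting the rank-two structure of the middle block, together with the fact that contracting a coefficient vector against the corresponding monomial vector reproduces the associated polynomial, I would compute $P(\x) = (\pi_\epsilon(\x)^T p)(s^T \pi_\eta(\x)) + (\pi_\epsilon(\x)^T q)(r^T \pi_\eta(\x)) = p(\x) s(\x) + q(\x) r(\x) = t(\x)$, where $p,q$ are understood to be zero-padded to length $\epsilon + 1$ and $r,s$ to length $\eta + 1$. Since, under the stated no-common-factor assumption, $t(\x)$ and $f(\x)$ share the same zeros (as already noted before the statement), $\mathcal L(\x)$ is a linearization for $f(\x)$ in the agreed sense.

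I expect no serious obstacle here: the result is a bookkeeping specialization of the general machinery already proved. The two points deserving a little care are the management of the two possibly distinct degrees $\epsilon$ and $\eta$, with the attendant zero-padding of the coefficient vectors needed to make the rank-two factorization of $M_0$ dimensionally consistent, and the observation that $\mathcal L(\x)$ is genuinely a linearization for the \emph{grade}-$(\epsilon + \eta)$ polynomial $t(\x)$, whose actual degree may be strictly smaller, so that, exactly as in Theorem~\ref{thm:polynomialsum}, spurious infinite eigenvalues can arise without contaminating the finite roots.
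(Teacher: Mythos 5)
Your proposal is correct and follows essentially the same route as the paper: the paper's proof simply repeats the reasoning of Theorem~\ref{thm:polynomialsum}, i.e.\ it invokes the general framework of Theorem~\ref{thm:twobases} with monomial dual bases and the rank-two coefficient matrix $ps^T + qr^T$, identifying $\pi_\epsilon^T(\x)(ps^T+qr^T)\pi_\eta(\x) = p(\x)s(\x)+q(\x)r(\x) = f(\x)s(\x)q(\x)$ exactly as you do. Your added care about zero-padding, the choice $w_k = e_{k+1}$, and the grade-versus-degree caveat are all consistent with (and slightly more explicit than) the paper's treatment.
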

    
    \begin{proof}
      It suffices to follow the same reasoning of the proof of Theorem~\ref{thm:polynomialsum}, so that we obtain that $\mathcal L(\x)$ 
      is a linearization for 
      \[
          \pi_{\epsilon}^T(\x) (p s^T + qr^T) \pi_{\eta}(\x)
        = p(\x) s(\x) + r(\x) q(\x) = f(\x) s(\x) q(\x),
      \]
      which concludes the proof. 
    \end{proof}
    
    The result can also be extended to the case where different polynomial
    bases are involved. More precisely, we have the following corollary.
    
    \begin{corollary} \label{cor:rationalsumtwobases}
      Let $p(\x), q(\x), r(\x)$ and $s(\x)$ polynomials defined as follows:
      \[
        p(\x) = \sum_{i = 0}^{\epsilon} p_i \phi_i(\x), \quad 
        q(\x) = \sum_{i = 0}^{\epsilon} q_i \phi_i(\x), \quad         
        r(\x) = \sum_{i = 0}^{\eta} q_i \psi_i(\x), \quad     
        s(\x) = \sum_{i = 0}^{\eta} s_i \psi_i(\x)         
      \]
      for some polynomial bases $\{ \phi_i \}$ and $\{ \psi_i \}$. 
      Then the matrix polynomial
      \[
        \mathcal L(\x) = \begin{bmatrix}
          p s^T + q r^T & L_{\epsilon,\phi}^T(\x) \\
          L_{\eta,\psi}(\x) & 0 \\          
        \end{bmatrix}
      \]
      is a linearization for both 
      $f_1(\x) = \frac{p(\x)}{q(\x)} + \frac{r(\x)}{s(\x)}$ and 
      $f_2(\x) = \frac{p(\x)}{r(\x)} + \frac{q(\x)}{s(\x)}$, 
      where $p,q,r$ and $s$
      are the column vectors containing the coefficients of the polynomials
      (padded with some leading zeros if the dimensions do not match).
    \end{corollary}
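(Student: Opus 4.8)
The plan is to reduce the statement directly to Theorem~\ref{thm:twobases} applied in the scalar case, and then to observe a small algebraic identity that makes the \emph{same} pencil serve both rational functions $f_1$ and $f_2$.

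First I would set $M_1 = 0$ and $M_0 = p s^T + q r^T$, so that $\mathcal L(\x)$ is exactly the pencil of Theorem~\ref{thm:twobases} with the roles of the two families taken by $L_{\epsilon,\phi}(\x)$ (upper right, transposed) and $L_{\eta,\psi}(\x)$ (lower left), and with $I$ being $1\times 1$. To invoke that theorem I must check its hypotheses: both $L_{\epsilon,\phi}(\x)$ and $L_{\eta,\psi}(\x)$ are dual bases by assumption, and the no-common-divisor condition holds because $\{\phi_i\}$ and $\{\psi_i\}$ are bases for the polynomials of degree up to $\epsilon$ and $\eta$, so the constant $1$ lies in their span and $w_\star := \Gamma_\star^{-1}(1)$ (as in Remark~\ref{rem:basis-map}) satisfies $\pi_{k,\star}(\x)^T w_\star = 1$. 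Theorem~\ref{thm:twobases} then gives that $\mathcal L(\x)$ is a linearization for
\[
  P(\x) := \pi_{\epsilon,\phi}(\x)^T (p s^T + q r^T)\, \pi_{\eta,\psi}(\x).
\]

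Next I would expand $P(\x)$ using bilinearity together with the isomorphism $\Gamma_\star$: since $\pi_{\epsilon,\phi}(\x)^T p = p(\x)$, $\pi_{\epsilon,\phi}(\x)^T q = q(\x)$, $s^T \pi_{\eta,\psi}(\x) = s(\x)$ and $r^T \pi_{\eta,\psi}(\x) = r(\x)$, the rank-one structure of $M_0$ factors each term, yielding
\[
  P(\x) = \bigl(\pi_{\epsilon,\phi}(\x)^T p\bigr)\bigl(s^T \pi_{\eta,\psi}(\x)\bigr) + \bigl(\pi_{\epsilon,\phi}(\x)^T q\bigr)\bigl(r^T \pi_{\eta,\psi}(\x)\bigr) = p(\x) s(\x) + q(\x) r(\x).
\]

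The conceptually interesting step --- and the only point requiring any thought --- is that this single polynomial $P(\x)$ is what one obtains after clearing denominators from \emph{either} rational function. Indeed $f_1(\x)\, q(\x) s(\x) = p(\x) s(\x) + r(\x) q(\x) = P(\x)$ and $f_2(\x)\, r(\x) s(\x) = p(\x) s(\x) + q(\x) r(\x) = P(\x)$, the two clearings producing the same polynomial simply because the sum of the two numerators is commutative. Under the coprimality assumptions appropriate to each $f_i$ (for $f_1$, that $p,q$ and $r,s$ share no factors and the two ratios have no common pole; for $f_2$, the analogous statement for the pairs $p,r$ and $q,s$), no zeros are cancelled against the denominators, so the zeros of $f_i$ are precisely the roots of $P(\x)$. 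Hence $\mathcal L(\x)$, being a linearization of $P(\x)$ in the sense of Definition~\ref{def:linearization}, is a linearization for both $f_1$ and $f_2$ in the root-sharing sense used throughout this section. I do not expect a genuine obstacle: the whole argument is a specialization of Theorem~\ref{thm:twobases}, and the only care needed is bookkeeping with the transpose/ordering conventions so that the rank-one term $p s^T$ factors correctly, together with the observation that the differing coprimality hypotheses for $f_1$ and $f_2$ both lead back to the same cleared polynomial $P(\x)$.
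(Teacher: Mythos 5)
Your proof is correct and follows essentially the same route as the paper's: both apply Theorem~\ref{thm:twobases} with $M_1 = 0$ and $M_0 = p s^T + q r^T$ to conclude that $\mathcal L(\x)$ linearizes $t(\x) = p(\x)s(\x) + q(\x)r(\x)$, and then observe that this single polynomial is the cleared-denominator form of both $f_1$ and $f_2$. If anything, your version is more explicit than the paper's (which simply defers to the proofs of Theorem~\ref{thm:rationalsum} and Theorem~\ref{thm:polynomialsum}), since you verify the hypotheses of Theorem~\ref{thm:twobases} directly and spell out the coprimality assumptions needed for each $f_i$ so that no roots are cancelled against the denominators.
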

    
    \begin{proof}
      By following the same proof of Theorem~\ref{thm:rationalsum} we obtain that
      $\mathcal L(\x)$ is a linearization for the polynomial
      \[
        t(\x) = \pi_{\epsilon, \phi}^T(\x) (p s^T + qr^T) \pi_{\eta, \psi}(\x)
        = p(\x) s(\x) + q(\x) r(\x)
      \]
      which has the same roots as the rational functions
      \[
       f_1(\x) = \frac{p(\x)}{q(\x)} + \frac{r(\x)}{s(\x)}, 
       \qquad 
       f_2(\x) = \frac{p(\x)}{r(\x)} + \frac{q(\x)}{s(\x)}. 
      \]
      This concludes the proof. 
    \end{proof}
    
    \begin{remark}
      The above result shows that we can handle two specific cases. 
      First, the case where each rational function is defined using 
      polynomial in a
      certain basis, and second,
      the one where both the denominators and the numerators
      share a common
      basis.
    \end{remark}
    
    An application of the above results is to find the intersection
    of two rational functions. As in the previous case, 
    $\mathcal L(\x)$ is linearization for a polynomial of grade 
    $\max\{ \deg p(\x), \deg q(\x) \} + \max\{ \deg r(\x), \deg s(\x) \} + 1$
    while the degree of the polynomial $f(\x) s(\x) q(\x)$ is 
    $\max\{ \deg p(\x) + \deg s(\x) , \deg r(\x) + \deg q(\x) \}$. 
    
    Since the first quantity is always larger than the second one, 
    the linearization introduces at least one
    infinite eigenvalue.    
    However, in many interesting cases, such as when the degree of the numerator
    and the denominator are the same in each rational function, we
    only have one spurious infinite eigenvalue, that can be deflated
    easily. 
            
    The result can however be improved and, for these cases, 
    we can build a strong linearization relying on the following. 
    
    \begin{theorem} \label{thm:strongrationalsum}
      Let $f(\x)$ be a rational function with the same hypotheses and 
      notation of Corollary~\ref{cor:rationalsumtwobases}, 
      and assume that there exist two $\epsilon \times (\epsilon - 1)$
      matrices $A$ and $B$ such that
      \[
        \pi_{\epsilon, \phi}(\x) = (\x A + B) \pi_{\epsilon-1, \phi}(\x)
      \]
      Then the matrix polynomial 
      \[
        \mathcal L(\x) = \begin{bmatrix}
          (\x A + B)^T ps^T - (\x A + B)^T q r^T & L_{\epsilon-1,\phi}^T(\x) \\
          L_{\eta,\psi}(\x) & 0 \\          
        \end{bmatrix}
      \]
      is a strong linearization for $f_1(\x)$ and $f_2(\x)$.
    \end{theorem}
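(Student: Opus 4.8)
The plan is to read the pencil off as an instance of Theorem~\ref{thm:twobases} and then collapse it with the recurrence $\pi_{\epsilon,\phi}(\x) = (\x A + B)\pi_{\epsilon-1,\phi}(\x)$. First I would set $M_1 := A^T(ps^T - qr^T)$ and $M_0 := B^T(ps^T - qr^T)$, so that the top-left block equals $\x M_1 + M_0 = (\x A + B)^T(ps^T - qr^T)$ and $\mathcal L(\x)$ takes exactly the shape
\[
  \mathcal L(\x) = \begin{bmatrix} \x M_1 + M_0 & L_{\epsilon-1,\phi}^T(\x) \\ L_{\eta,\psi}(\x) & 0 \end{bmatrix}
\]
required by Theorem~\ref{thm:twobases}, with the two dual bases being $L_{\epsilon-1,\phi}(\x)$ and $L_{\eta,\psi}(\x)$. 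These are the explicit dual \emph{minimal} bases constructed in the previous subsections (for instance via Lemma~\ref{lem:recurrence-relation}), whose leading coefficients, and hence reversals, have full row rank; consequently all the hypotheses of Theorem~\ref{thm:twobases}, including the minimality and full-row-rank reversal conditions needed for \emph{strongness}, are in force. I would then invoke that theorem to conclude that $\mathcal L(\x)$ is a strong linearization for $P(\x) = \pi_{\epsilon-1,\phi}^T(\x)\,(\x A + B)^T (ps^T - qr^T)\,\pi_{\eta,\psi}(\x)$.

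Next I would collapse $P(\x)$ using the hypothesis. Transposing $\pi_{\epsilon,\phi}(\x) = (\x A + B)\pi_{\epsilon-1,\phi}(\x)$ gives $\pi_{\epsilon-1,\phi}^T(\x)(\x A + B)^T = \pi_{\epsilon,\phi}^T(\x)$, whence
\[
  P(\x) = \pi_{\epsilon,\phi}^T(\x)\,(ps^T - qr^T)\,\pi_{\eta,\psi}(\x) = p(\x)s(\x) - q(\x)r(\x) = t(\x),
\]
which is the numerator $t(\x)$ appearing in Corollary~\ref{cor:rationalsumtwobases} (up to the sign of the coupling term, which should match that corollary so that $t$ is the numerator of $f_1$ and of $f_2$). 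Since under the standing no-common-factor assumption $t(\x)$ has the same finite roots as $f_1(\x)$ and $f_2(\x)$, this already shows that $\mathcal L(\x)$ is a linearization for both.

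The essential new content, and the step I expect to be the main obstacle, is showing that this linearization is \emph{strong}, i.e.\ that the single spurious infinite eigenvalue carried by the construction of Corollary~\ref{cor:rationalsumtwobases} has been removed. The mechanism is that replacing $\pi_{\epsilon,\phi}(\x)$ by $\pi_{\epsilon-1,\phi}(\x)$ while keeping the top-left block genuinely linear in $\x$ shrinks the size of $\mathcal L(\x)$ by one (from $\epsilon+\eta+1$ in Corollary~\ref{cor:rationalsumtwobases} to $\epsilon+\eta$ here), so the grade of $P(\x)$ seen by Theorem~\ref{thm:twobases} drops by one accordingly. It then remains to check that this grade equals $\deg t$, which is exactly where the hypothesis that numerator and denominator have the same degree in each fraction is used: it guarantees $\deg(ps) = \deg(qr) = \epsilon+\eta$ and that the top-grade coefficient does not vanish. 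Feeding this into the reversal argument inside the proof of Theorem~\ref{thm:twobases}, the block $A_{1,1}(\x)$ there equals $\rev P(\x) = \rev t(\x)$ rather than a degenerate lower-grade object; combined with the full-row-rank reversals of $L_{\epsilon-1,\phi}(\x)$ and $L_{\eta,\psi}(\x)$, this gives extended unimodular equivalence of $\rev \mathcal L(\x)$ with $\rev t(\x)$ and hence strongness. The delicate bookkeeping is therefore concentrated in controlling the top-degree coefficient (ruling out cancellation under the equal-degree hypothesis) and in confirming that the reduced dual bases still satisfy the reversal requirement, both of which reduce to the leading-coefficient full-rank properties already established for the explicit bases.
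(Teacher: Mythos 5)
Your proposal is correct and follows essentially the same route as the paper's proof: invoke Theorem~\ref{thm:twobases} with the dual bases $L_{\epsilon-1,\phi}(\x)$ and $L_{\eta,\psi}(\x)$, absorb $(\x A+B)^T$ into $\pi_{\epsilon-1,\phi}(\x)^T$ to identify the linearized polynomial with $t(\x)$, and conclude strongness from the degree of $t$ being $\epsilon+\eta$, the size of the pencil. If anything you are more explicit than the paper about the two ingredients of strongness (minimality and full-row-rank reversals of the dual bases, and non-vanishing of the leading coefficient of $t$ --- a point which, note, both you and the paper assert rather than prove, since a cancellation $p_\epsilon s_\eta = q_\epsilon r_\eta$ in the leading coefficients is not excluded by the stated hypotheses), and you correctly flag the sign discrepancy between the pencil's $ps^T - qr^T$ and the corollary's $ps^T + qr^T$, which the paper itself glosses over.
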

    
    \begin{proof}
      By applying again Theorem~\ref{thm:twobases} we obtain that $\mathcal L(\x)$ 
      is a linearization for 
      \begin{align*}
        t(\x) &= \pi_{\epsilon-1, \phi}^T(\x) \left[ (\x A + B)^T ps^T - (\x A + B)^T q r^T \right] \pi_{\eta, \psi}(\x) \\
        &= \pi_{\epsilon, \phi}^T(\x) \left( ps^T - q r^T \right) \pi_{\eta, \psi}(\x) \\
        &= p(\x) s(\x) + q(\x) r(\x). 
      \end{align*}
      Since $t(\x)$ has degree $\epsilon + \eta$, which is the size of 
      $\mathcal L(\x)$, there are no extra infinite eigenvalues, 
      and so this is a strong linearization.     
    \end{proof}
    
    \begin{remark}
      The hypotheses of Theorem~\ref{thm:strongrationalsum} are satisfied in many cases. Some concrete examples are the following:
      \begin{enumerate}[(i)]
        \item When $\{ \phi_i \}$ is a degree-graded basis for 
          $\mathbb F_k[\x]$ then $\phi_{k+1}(\x)$ has degree $k + 1$
          and we can find $a$ so that 
          $\x^{k} = a^T \pi_{k,\phi}(\x)$. If we choose 
          $\alpha$ to be the leading coefficient of $\phi_{k+1}(\x)$
          we have
          \[
            \phi_{k+1}(\x) - \x \alpha a^T \pi_{k, \phi}(\x) = 
              b^T \pi_{k, \phi}(\x)
          \]
          for some $b \in \mathbb F^{k+1}$, since the left-hand side
          has degree $k$. This implies that 
          \[
            \pi_{k+1, \phi}(\x) = \left(
              \x \begin{bmatrix}
                & \alpha a^T & \\
                0 & \dots & 0 \\
                \vdots &  & \vdots  \\
                0 & \dots & 0 \\
              \end{bmatrix} + 
              \begin{bmatrix}
                & b^T &  \\
                1 \\
                & \ddots \\
                && 1 \\
              \end{bmatrix}
            \right) \pi_{k, \phi}(\x). 
          \]
        \item When $\{ \phi_i \}$ is an orthogonal basis then it is also
        degree-graded and so the above result applies. In this 
        case, however, 
        it is very easy to get an explicit expression for $\alpha$, 
        $a$ and $b$, since they just contain the coefficients of the recurrence
        relation that allows to obtain $\phi_{k+1}(\x)$ starting
        from the previous terms. 
        \item If $\{ \phi_i \}$ is the Lagrange basis we can still find
        suitable matrices $A$ and $B$ so that the hypothesis are 
        satisfied. Assume that $\pi_{k, \phi}(\x)$ is the 
        Lagrange basis on the interpolation nodes $\sigma_1, \ldots, 
        \sigma_k$ and that $\pi_{k+1,\phi}(\x)$ has the additional
        node $\sigma_{k+1}$. Then we have
        \[
          \pi_{k+1, \phi}(\x) = \begin{bmatrix}
            & \alpha (\x - \sigma_{k}) e_1^T & \\
            \frac{\x - \sigma_{k+1}}{\sigma_k - \sigma_{k+1}} \\
            & \ddots \\
            & & \frac{\x - \sigma_{k+1}}{\sigma_1 - \sigma_{k+1}} \\
          \end{bmatrix} \pi_{k, \phi}(\x),
        \]
        where $\alpha = \frac{1}{\sigma_{k+1} - \sigma_{k}} \prod_{j=1}^{k-1} \frac{\sigma_k - \sigma_j}{\sigma_{k+1} - \sigma_j}$. 
      \end{enumerate}
      \end{remark}
      
    \begin{remark}
      Notice that the requirement needs to hold only for one of the two
      families of polynomials. If 
      the relation holds on $\{ \psi_i \}$ instead of
      $\{ \phi_i \}$ the procedure is analogous. 
    \end{remark}
    
    As a concrete example, we report here how the (non strong)
    linearization looks 
    when considering the following rational function: 
    \[
      f(\x) = \frac{2\x^2 -1}{\x^2 + \x + 3} + 
        \frac{T_1(\x) + T_0(\x)}{T_1(\x) - T_0(\x)}. 
    \]
    We have that $p, q, r, s$ are given by 
    \[
      p = \begin{bmatrix}
        2 \\ 0 \\ -1 \\
      \end{bmatrix}, \quad 
      q = \begin{bmatrix}
              1 \\ 1 \\ 3 \\
      \end{bmatrix}, \quad 
      r = \begin{bmatrix}
        1 \\ 1 \\
      \end{bmatrix}, \quad 
      s = \begin{bmatrix}
        1 \\ -1 \\
      \end{bmatrix}. 
    \]
    We get
    \[
      \mathcal L(\x) = \begin{bmatrix}
                p s^T + q r^T & L_{\epsilon,\phi}^T(\x) \\
                L_{\eta,\psi}(\x) & 0 \\          
              \end{bmatrix} = \left[ \begin{array}{cc|cc}
        3 & -1  & 1 & 0 \\
        1 & 1 & -\x & 1 \\
        2 & 4 & 0 & -\x \\ \hline 
        1 & -\x & 0 & 0\\
      \end{array} \right]. 
    \]    
    By Theorem~\ref{thm:strongrationalsum}
    we can also obtain a strong linearization for 
    $f(\x)$. In the monomial case the $A$ and $B$ matrices
    of the hypothesis are given by 
    \[
      A = e_1^{(k+1)} (e_1^{(k)})^T, \qquad 
      B = \begin{bmatrix}
        0 & \ldots & 0 \\
        \\
        & I_k \\
        \\
      \end{bmatrix},
    \]
    where $e_i^{(k)}$ is the $i$-th column of $I_k$. A straightforward
    application of the theorem yields the linearization
    \[
      \mathcal L(\x) = \left[ \begin{array}{cc|c}
        3\x + 1 & 1 - \x & 1 \\
        2 & 4 & -\x \\ \hline
        1 & -\x & 0 \\
      \end{array} \right]
    \]
    which is a strong linearization for the rational function $f(\x)$. 
    
  In the following we report some numerical experiments that show the
  effectiveness of the approach. 
  
  \begin{numerical-experiment} \label{numexp:rational}
    Here we test the linearization for the solution of 
    the sum of rational functions. We generate four polynomials
    $p(\x), q(\x), r(\x)$ and $s(\x)$ of the same degree $n$,
    and with $p(\x), q(\x)$ being in the monomial basis and $r(\x)$ and $s(\x)$
    in the Chebyshev one. 
    
    We then find the zeros of the rational function 
    \[
      f(\x) := \frac{p(\x)}{q(\x)} + \frac{r(\x)}{s(\x)}
    \]
    by applying Theorem~\ref{thm:rationalsum} and 
    Theorem~\ref{thm:strongrationalsum} and using the 
    QZ algorithm on the obtained linearizations. 
    We compare the results with those
    obtained by symbolically computing the coefficients 
    of the polynomial $t(\x) := p(\x) s(\x) + r(\x) q(\x)$ and 
    computing its roots with $16$ guaranteed digits using 
    \texttt{MPSolve} \cite{bini2014solving}. The experiments 
    have been repeated $50$ times and an average has been taken. 
    The results are reported in 
    Table~\ref{tab:rational} and Figure~\ref{fig:rational}.
    
    \begin{table}
    \begin{center}
      \begin{tabular}{c|cc}
        Degree & Theorem~\ref{thm:rationalsum} & Theorem~\ref{thm:strongrationalsum} \\ \hline
 5 & 3.29e-16 & 2.98e-16 \\ 
 10 & 4.37e-16 & 4.01e-16 \\ 
 20 & 5.47e-16 & 5.07e-16 \\ 
 40 & 6.87e-16 & 5.75e-16 \\ 
 80 & 1.14e-15 & 7.93e-16 \\ 
 160 & 1.72e-15 & 1.40e-15 \\ 
 320 & 2.57e-15 & 2.06e-15 \\ 
 640 & 4.21e-15 & 3.53e-15 \\
      \end{tabular}
      \caption{Norm of the absolute error on the computed (finite)
      roots of the
      rational function $f(\x)$.}
      \label{tab:rational}
      \end{center}
    \end{table}
    
    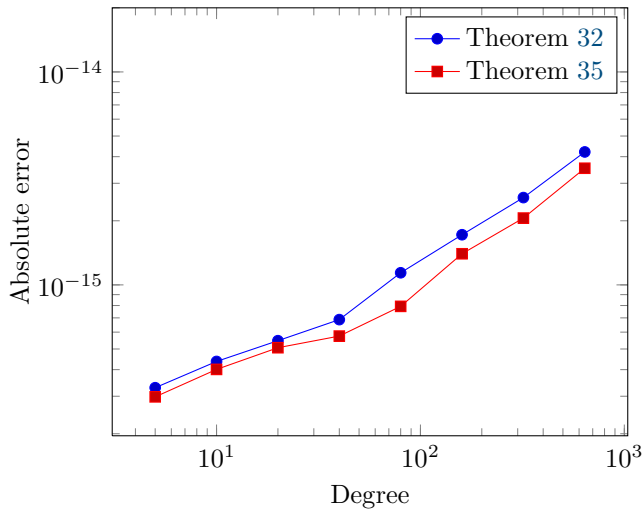
\begin{figure}
    \begin{center}
      \begin{tikzpicture}
      \begin{loglogaxis}[xlabel=Degree,ylabel=Absolute error,ymax=2e-14]
      \addplot table { 
 5  3.29e-16 
 10  4.37e-16 
 20  5.47e-16 
 40  6.87e-16 
 80  1.14e-15 
 160  1.72e-15 
 320  2.57e-15 
 640  4.21e-15 
      };
    \addplot table { 
 5  2.98e-16 
 10  4.01e-16 
 20  5.07e-16 
 40  5.75e-16 
 80  7.93e-16 
 160  1.40e-15 
 320  2.06e-15 
 640  3.53e-15  
    };
      \legend{Theorem~\ref{thm:rationalsum},
      Theorem~\ref{thm:strongrationalsum}}      
      \end{loglogaxis}
      \end{tikzpicture}
      \caption{Norm of the absolute error on the computed roots of the
            rational function $f(\x)$. Both the strong and non-strong version
            of the linearization have been tested.}
      \label{fig:rational}
          \end{center}
    \end{figure}
  \end{numerical-experiment}
    
  \section{Preserving even, odd and palindromic structures}
    \label{sec:structure-preservation}
  
  In this section we deal with the following problem: we consider the case
  where a matrix 
  polynomial has a $\star$-even, $\star$-odd or 
  $\star$-palindromic structure. These are often found in
  applications and are of particular interest since they induce
  some symmetries on the spectrum. 
  
  For this reason it is important to develop linearizations that 
  enjoy the same structure, so the symmetries in the spectrum
  will be preserved. Many authors have investigated
  this in recent years, providing different solutions
  \cite{mackey2006structured,mackey2009numerical}. Linearizations 
  for these structures have been found by exploiting the
  generality of the $\mathbb L_1$ and $\mathbb L_2$ spaces
  of linearizations introduced in \cite{mackey2006vector}.
  Our approach here leads to very similar results, but 
  is instead based on the freedom that we have in choosing the
  polynomial families $\{ \phi_i \}$ and $\{ \psi_i \}$. 
  
  Here we often use $\star$ in place of the transpose or 
  conjugate transpose operator, since the constructions
  are valid for both choices. We give the definitions
  of these structures. 
  
  \begin{definition}
    A matrix polynomial $P(\x)$ 
    is \emph{$\star$-even} if $P(\x) = P(-\x)^\star$. 
    Similarly, we say that 
    $P(\x)$ is \emph{$\star$-odd} if $P(\x) = - P(-\x)^\star$. 
  \end{definition}   
  
  \begin{definition}
    A matrix polynomial $P(\x)$ is said to be \emph{$\star$-palindromic}
    if $P(\x) = \rev P(\x)^\star$. Similarly. we say that $P(\x)$ is 
    \emph{anti $\star$-palindromic} if $P(\x) = - \rev P(\x)^\star$. 
  \end{definition}
  
  Notice that all these relations induce a certain symmetry on the coefficients
  in the monomial basis. In particular, we have the following: 
  
  \begin{lemma}
    Let $P(\x)$ a matrix polynomial. Then, 
    \begin{enumerate}[(i)]
      \item If the matrix polynomial is $\star$-palindromic 
        or anti $\star$-palindromic the eigenvalues come in
        pairs $(\x, \frac 1 {\x})$ when $\star = {}^T$
        and $(\overline\x, \frac 1 {\x})$ when $\star = {}^H$
      \item If the matrix polynomial is $\star$-even or 
      $\star$-odd the eigenvalues come in pairs $(\x, -\x)$
      when $\star = {}^T$ and $(\x, -\overline\x)$ when $\star = {}^H$.
    \end{enumerate}
  \end{lemma}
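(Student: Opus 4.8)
The plan is to reduce each of the four structural identities to a single scalar identity satisfied by $\det P(\x)$ and then read off the pairing of the zeros. The one ingredient needed is how the adjoint operation acts on the determinant. Writing $P(\x) = \sum_{i=0}^d P_i \x^i$ and adopting the matrix-polynomial convention $P^\star(\x) := \sum_{i=0}^d P_i^\star \x^i$, a direct manipulation of the coefficients gives, for every scalar $\mu$, the matrix identities $P^\star(\mu) = P(\mu)^T$ when $\star = {}^T$ and $P^\star(\mu) = P(\bar\mu)^H$ when $\star = {}^H$. Taking determinants and using $\det(A^T) = \det A$ and $\det(A^H) = \overline{\det A}$ yields $\det P^\star(\mu) = \det P(\mu)$ in the transpose case and $\det P^\star(\mu) = \overline{\det P(\bar\mu)}$ in the conjugate-transpose case. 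These two formulas are the only computations that are really needed.

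For part (ii) I would substitute the even/odd relation $P(\x) = \pm P^\star(-\x)$ into these formulas. In the transpose case this gives $\det P(\x) = \pm \det P(-\x)$, and in the conjugate case $\det P(\x) = \pm \overline{\det P(-\bar\x)}$, where the sign is the global factor $(-1)^n$ ($n$ the matrix size) coming from the odd case and is irrelevant to the zero set. Hence $\det P(\x_0) = 0$ forces $\det P(-\x_0) = 0$ (resp. $\det P(-\bar\x_0) = 0$), which is exactly the claimed pairing $(\x,-\x)$ for $\star = {}^T$ and $(\x,-\bar\x)$ for $\star = {}^H$.

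For part (i) the extra ingredient is the reversal. Since $\rev$ merely permutes the coefficient matrices it commutes with $\star$ and preserves the degree $d$, and the entrywise identity $\rev P(\x) = \x^d P(1/\x)$ gives $\det \rev P(\x) = \x^{nd} \det P(1/\x)$. Substituting the (anti-)palindromic relation $P(\x) = \pm (\rev P)^\star(\x)$ into the determinant formulas above then produces $\det P(\x) = \pm\, \x^{nd}\det P(1/\x)$ for $\star = {}^T$ and $\det P(\x) = \pm\, \x^{nd}\,\overline{\det P(1/\bar\x)}$ for $\star = {}^H$. For $\x_0 \neq 0$ this shows $\det P(\x_0) = 0$ if and only if $\det P(1/\x_0) = 0$ (resp. $\det P(1/\bar\x_0) = 0$), i.e.\ the pairings $(\x,1/\x)$ and $(\bar\x,1/\x)$; and since the palindromic relation forces $P_0 = \pm P_d^\star$, the value $\x = 0$ (and hence $\x = \infty$) occurs as an eigenvalue only in the degenerate situation where it pairs with $\infty$, consistently with the reversal-based definition of the infinite eigenvalue.

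The main obstacle is not the determinant bookkeeping but making the statement precise about multiplicities and about the infinite eigenvalue. The determinant argument immediately handles regular $P(\x)$ at the level of spectra; to upgrade it to a statement about partial multiplicities one would instead apply the involutions $\x \mapsto -\x$ and $\x \mapsto 1/\x$ (optionally composed with conjugation) to the Smith form of $P(\x)$ and observe that they send elementary divisors to elementary divisors, so that the whole list of invariant factors is permuted by the pairing. The only genuinely delicate point is the palindromic case at $\x \in \{0,\infty\}$, where one must pass through $\rev P(\x)$ and the grade-versus-degree distinction highlighted earlier; but the identity $\det \rev P(\x) = \x^{nd}\det P(1/\x)$ makes the pairing of $0$ with $\infty$ automatic.
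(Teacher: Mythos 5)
Your proof is correct and takes essentially the same route as the paper: the paper disposes of this lemma in one line by remarking that singularity is preserved by the defining symmetries, and your determinant identities (together with $\det A^T = \det A$, $\det A^H = \overline{\det A}$, and $\det \rev P(\lambda) = \lambda^{nd}\det P(1/\lambda)$) are precisely the explicit form of that remark. Your additional observations on partial multiplicities via the Smith form and on the pairing of $0$ with $\infty$ go beyond what the paper states or proves, but they are sound and do not change the substance of the argument.
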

  
  \begin{proof}
    Follows immediately by the definitions above 
    noting that singularity is preserved
    by all the symmetries considered there. 
  \end{proof}
  
  \subsection{Even and odd polynomials}
  
  In this section we deal with linearizing even and 
  odd polynomials. In practice we will
  only consider the case of even polynomials since the other is 
  analogous. 
  
  \begin{theorem} \label{thm:even-linearization}
    Let $P(\x) = \sum_{i = 0}^{2k+1} P_i \x^i$ 
    be a $\star$-even matrix polynomial of 
    grade $2k + 1$. Then the even matrix polynomial
    \[
      \mathcal L(\x) = 
        \left[ \begin{array}{cccc|ccc}
              (-1)^{k} (\x P_{2k+1} + P_{2k}) &&&& I \\
              & \ddots &&& -\x I & \ddots \\
              && \ddots &&& \ddots & I  \\
              &&& \x P_1 + P_0 & && -\x I \\ \hline 
              I & \x I && \\
              & \ddots & \ddots&& \\
              && I & \x I \\ 
            \end{array} \right]
    \]
    is a linearization for $P(\x)$. 
  \end{theorem}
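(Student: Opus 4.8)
The plan is to recognize $\mathcal{L}(\x)$ as the instance of Theorem~\ref{thm:twobases} produced by the product family $\phi \pf \psi$ with $\phi_j(\x) = \x^j$ and $\psi_j(\x) = (-\x)^j$ for $j = 0, \ldots, k$, so that $\eta = \epsilon = k$ and the grade is $\eta + 1 + \epsilon = 2k+1$. First I would observe that the two off-diagonal blocks displayed are exactly $L_{k,\phi}(\x)^T \otimes I$ and $L_{k,\psi}(\x) \otimes I$, where $L_{k,\phi}(\x)$ is the bidiagonal pencil with blocks $I, -\x I$ and $L_{k,\psi}(\x)$ the one with blocks $I, \x I$. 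Both families are degree graded with the trivial recurrences $\phi_{j+1} = \x\phi_j$ and $\psi_{j+1} = -\x\psi_j$, so Lemma~\ref{lem:recurrence-relation} (equivalently a one-line check that each row annihilates $\pi_{k,\star}(\x)$, e.g. the $t$-th row of $L_{k,\psi}(\x)$ gives $(-\x)^{k-t+1} + \x(-\x)^{k-t} = 0$) shows these pencils are full row-rank linear dual bases. The no-common-divisor hypothesis holds with $w_{k,\star} = e_{k+1}$ since $\phi_0 = \psi_0 = 1$, so all hypotheses of Theorem~\ref{thm:twobases} are in force.

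Next I would read off $M_1$ and $M_0$ from the block-diagonal top-left corner, whose diagonal block aligned with the pair $\phi_a, \psi_a$ is $(-1)^{a}(\x P_{2a+1} + P_{2a})$, and verify that the associated polynomial is $P(\x)$. Expanding $(\pi_{k,\phi}(\x) \otimes I)^T (\x M_1 + M_0)(\pi_{k,\psi}(\x) \otimes I) = \sum_{a,b=0}^{k} \phi_a(\x)\psi_b(\x)\,(\x M_1 + M_0)_{a,b}$ and using that $\x M_1 + M_0$ is block diagonal, only the terms $a = b$ survive. Since $\phi_a\psi_a = \x^a(-\x)^a = (-1)^a \x^{2a}$ multiplies the block $(-1)^a(\x P_{2a+1} + P_{2a})$, the two sign factors cancel and that term equals $P_{2a+1}\x^{2a+1} + P_{2a}\x^{2a}$; summing over $a = 0, \ldots, k$ recovers $\sum_{i=0}^{2k+1} P_i \x^i = P(\x)$. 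Here one must keep in mind that $\pi_{k,\star}(\x)$ lists the family in reverse index order, which is exactly what aligns the top-left block $(-1)^k(\x P_{2k+1}+P_{2k})$ with the leading term. By Theorem~\ref{thm:twobases}, $\mathcal{L}(\x)$ is then a linearization for $P(\x)$ (in fact a strong one, since these monomial-type dual bases are minimal with full row-rank reversals).

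Finally I would verify that $\mathcal{L}(\x)$ is $\star$-even, i.e. $\mathcal{L}(-\x)^\star = \mathcal{L}(\x)$, by working blockwise on the $2\times 2$ block form of Theorem~\ref{thm:twobases} and using that $\star$-evenness of $P$ means $P_i^\star = (-1)^i P_i$. For the diagonal blocks one has $[(-1)^{a}(-\x P_{2a+1} + P_{2a})]^\star = (-1)^{a}(\x P_{2a+1} + P_{2a})$, because $P_{2a+1}^\star = -P_{2a+1}$ and $P_{2a}^\star = P_{2a}$, so $\x M_1 + M_0$ is itself $\star$-even. For the off-diagonal blocks the decisive identity is the reflection $\psi_j(\x) = \phi_j(-\x)$, which gives $L_{k,\psi}(-\x) = L_{k,\phi}(\x)$ at the level of the scalar pencils; hence $(L_{k,\psi}(-\x) \otimes I)^\star = L_{k,\phi}(\x)^T \otimes I$ and symmetrically $(L_{k,\phi}(-\x)^T \otimes I)^\star = L_{k,\psi}(\x) \otimes I$, which are precisely the block swaps produced when conjugate-transposing $\mathcal{L}(-\x)$.

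The main obstacle I anticipate is not a single deep step but the combined sign and index bookkeeping: arranging that the alternating factors $(-1)^{a}$ in the diagonal blocks cancel the signs coming from $\psi_j = (-\x)^j$ when reconstructing $P$, while simultaneously the \emph{same} reflection $\psi_j(\x) = \phi_j(-\x)$ forces the two dual bases to exchange under $(\,\cdot\,)(-\x)^\star$. This interplay is exactly what makes the linearization inherit the $\star$-even symmetry, so the delicate design point is to take $\psi$ as the reflection of the monomial basis $\phi$ rather than as an independent second basis.
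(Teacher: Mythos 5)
Your proposal is correct and follows essentially the same route as the paper's proof: identify the off-diagonal blocks as dual bases $L_{k,\phi}(\x)\otimes I$ and $L_{k,\psi}(\x)\otimes I$ for the monomial family $\phi_j = \x^j$ and its reflection $\psi_j = (-\x)^j$, apply Theorem~\ref{thm:twobases} to the block-diagonal middle term, and observe that the alternating signs cancel so the product collapses to $P(\x)$. The only difference is one of detail: you spell out the hypothesis checks and the blockwise $\star$-evenness verification (via $P_i^\star = (-1)^i P_i$ and $L_{k,\psi}(-\x) = L_{k,\phi}(\x)$) that the paper dismisses as immediate.
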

  
  \begin{proof}
    It is immediate to verify that the matrix polynomial is $\star$-even. In order
    to check that it is a linearization for the correct polynomial we can
    see that the top-right block and bottom-left block are of the form
    $L_{k, \phi}(\x) \otimes I_m$ and $L_{k, \psi}(\x) \otimes I_m$, respectively,
    with $L_{k, \star}(\x)$ being dual bases for
    \[
      \pi_{k, \phi}(\x) = \begin{bmatrix}
        \x^{k-1} \\
        \vdots \\
        \x \\
        1 \\
      \end{bmatrix}, \qquad 
      \pi_{k, \psi}(\x) = \begin{bmatrix}
         (-1)^{k-1} \x^{k-1} \\
         \vdots \\
        -\x \\
        1 \\
       \end{bmatrix}. 
    \]
    Applying Theorem~\ref{thm:twobases} yields that $\mathcal L(\x)$ 
    is a linearization for the matrix polynomial
    \[
      (\pi_{k, \phi}(\x) \otimes I_m)^T \diag ((-1)^j (\x P_{2j+1} + P_{2j}))_{j = 0, \ldots, k} 
      (\pi_{k, \psi}(\x) \otimes I_m) = P(\x),
    \]
    which concludes the proof. 
  \end{proof}
  
  \subsection{Palindromic polynomials}
  
  A similar procedure can be applied to obtain $\star$-palindromic
  linearizations for $\star$-palindromic polynomials. However, 
  the construction in this case is slightly more complicated. 
  We first prove the following lemma, which provides linearizations
  with $\star$-palindromic off-diagonal blocks, and then we show
  how to choose the top-left block to make the whole matrix polynomial
  $\star$-palindromic. 
  
  \begin{theorem} \label{thm:palindromic-linearization}
    Let $\{ \phi_i \}$ and $\{ \psi_i \}$ be the polynomial bases
    defined by 
    \[
      \phi_i = \x^{k - i}, \qquad 
      \psi_i = \x^i, \qquad 
      i = 0, \ldots, k.
    \]
    Then two dual bases $L_{k, \phi}(\x)$ and $L_{k, \psi}(\x)$ 
    for $\{ \phi_i \}$ and $\{ \psi_i \}$, respectively, 
    are given by 
    \[
      L_{k, \phi}(\x) = \begin{bmatrix}
        1 & -\x \\ 
        & \ddots & \ddots \\
        && 1 & -\x \\
      \end{bmatrix}, \qquad L_{k, \psi}(\x) = \begin{bmatrix}
              \x & -1 \\ 
              & \ddots & \ddots \\
              && \x & -1 \\
            \end{bmatrix}
    \] and the $\star$-palindromic matrix polynomial
    \[
      \mathcal L(\x) = \begin{bmatrix}
        \x M + M^\star & L_{k, \phi}(\x)^\star \otimes I_m \\
        L_{k, \psi}(\x) \otimes I_m & 0 \\
      \end{bmatrix}, \quad 
      M = \begin{bmatrix}
              M_{1,1} & \dots & M_{1,k} \\
              \vdots && \vdots \\
              M_{k,1} & \dots & M_{k,k} \\ 
            \end{bmatrix},
    \]
    where $M_{ij} \in \mathbb C^{m \times m}$, 
    is a linearization for the degree $2k-1$ 
    matrix polynomial defined by
    \[ P(\x) = \sum_{i, j = 1}^k M_{j,i}^\star \x^{k + j - i - 1}
    + \sum_{i, j = 1}^k M_{i,j} \x^{k + j - i}. \]
  \end{theorem}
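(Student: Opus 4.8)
The plan is to obtain the linearization property as a direct application of Theorem~\ref{thm:twobases}, and to establish the $\star$-palindromic structure by a short symmetry computation on the two coefficient matrices of the pencil.

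First I would check that $L_{k,\phi}(\x)$ and $L_{k,\psi}(\x)$ satisfy the hypotheses of Theorem~\ref{thm:twobases}. The annihilation identities $L_{k,\phi}(\x)\pi_{k,\phi}(\x)=0$ and $L_{k,\psi}(\x)\pi_{k,\psi}(\x)=0$ are the telescoping monomial relations $\x^{j}-\x\cdot\x^{j-1}=0$, read off row by row; full row rank for every $\x$ is immediate since a suitable set of consecutive columns forms a triangular matrix with constant nonzero diagonal, and the same triangular shape of the leading coefficient gives minimality should a strong linearization be desired. The no-common-divisor condition is trivially met, as the constant $1$ belongs to both families, providing the required $w_{k,\star}$ with $\pi_{k,\star}(\x)^{T}w_{k,\star}=1$. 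Theorem~\ref{thm:twobases} then guarantees that $\mathcal L(\x)$ is a linearization for $P(\x)=(\pi_{k,\phi}(\x)\otimes I_m)^{T}(\x M+M^{\star})(\pi_{k,\psi}(\x)\otimes I_m)$.

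Next I would expand this product to recover the stated double sum, writing it as $\x\,(\pi_{k,\phi}\otimes I_m)^{T}M(\pi_{k,\psi}\otimes I_m)+(\pi_{k,\phi}\otimes I_m)^{T}M^{\star}(\pi_{k,\psi}\otimes I_m)$. The $(i,j)$ block of $M$ contributes $M_{i,j}$, and that of $M^{\star}$ contributes $(M^{\star})_{i,j}=M_{j,i}^{\star}$, each weighted by the scalar monomial $(\pi_{k,\phi})_i(\pi_{k,\psi})_j$, which is a single power of $\x$ fixed by the two exponents. Collecting terms produces $\sum_{i,j}M_{i,j}\x^{k+j-i}$ from the $\x M$ part and $\sum_{i,j}M_{j,i}^{\star}\x^{k+j-i-1}$ from the $M^{\star}$ part, a polynomial of degree $2k-1$. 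This bookkeeping is routine, but it is here that the ordering inside the vectors $\pi_{k,\star}$ and the block transpose defining $M^{\star}$ must be tracked with care, since a misaligned convention would transpose the index pattern of the result.

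Finally I would verify the $\star$-palindromic symmetry. Writing $\mathcal L(\x)=C_0+\x C_1$, a linear pencil is $\star$-palindromic precisely when $C_0=C_1^{\star}$. On the diagonal block this is the identity $M^{\star}=(M)^{\star}$, built into $\x M+M^{\star}$; on the off-diagonal blocks it rests on the reversal relation $L_{k,\psi}(\x)=\rev L_{k,\phi}(\x)$, so that the constant part of one dual basis is the leading part of the other. Under the block conjugate transposition $C_1\mapsto C_1^{\star}$ the $(1,2)$ block $L_{k,\phi}(\x)^{\star}\otimes I_m$ and the $(2,1)$ block $L_{k,\psi}(\x)\otimes I_m$ are then carried exactly onto their counterparts in $C_0$; when $\star={}^{H}$ one also notes that $L_{k,\phi}$ has real coefficient matrices, so $L_{k,\phi}(\x)^{\star}=L_{k,\phi}(\x)^{T}$ and Theorem~\ref{thm:twobases} applies verbatim. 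The main obstacle is not a hard estimate but the simultaneous consistency of these conventions: it is the interplay between the ordering in $\pi_{k,\star}$, the block transpose in $M^{\star}$, and the reversal linking the two off-diagonal blocks that makes the same pencil both linearize the target polynomial and carry the palindromic symmetry.
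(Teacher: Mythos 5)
Your proposal is correct and follows essentially the same route as the paper: check that $L_{k,\phi}(\x)$ and $L_{k,\psi}(\x)$ are dual bases satisfying the hypotheses of Theorem~\ref{thm:twobases}, then read off $P(\x)$ as $(\pi_{k,\phi}(\x)\otimes I_m)^T(\x M + M^\star)(\pi_{k,\psi}(\x)\otimes I_m)$. The only difference is that you spell out what the paper dismisses as ``immediate'' or leaves implicit---the bookkeeping that expands this product into the stated double sum, and the verification that $\mathcal L(\x)$ is indeed $\star$-palindromic via the reversal relation $L_{k,\psi}(\x)=\rev L_{k,\phi}(\x)$ together with the realness of the coefficient matrices when $\star = {}^H$.
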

  
  \begin{proof}
    It is immediate to verify that the given matrices $L_{k, \phi}(\x)$
    and $L_{k, \psi}(\x)$ are indeed dual bases.     
    By applying Theorem~\ref{thm:twobases} we get
    $P(\x)$ as 
    \[
      P(\x) = \begin{bmatrix} \x^{k-1} I_m & \cdots & I_m \end{bmatrix} (\x M + M^\star) 
      \begin{bmatrix}
        I_m \\ 
        \vdots \\
        \x^{k-1} I_m
      \end{bmatrix}.
    \]
  \end{proof}
  
  The result above can be used to construct $\star$-palindromic linearizations
  for $\star$-palindromic matrix polynomials. Let 
  $P(\x) = \sum_{j = 0}^n P_j \x^j$ be such a polynomial, and assume we
  want to describe a procedure to choose the block coefficients
  of $M$ in Theorem~\ref{thm:palindromic-linearization} in order
  to make $\mathcal L(\x)$ a linearization for $P(\x)$.   
  
  \begin{definition}[Block Diagonal Sum]
    Let $X = \bsd_k(M, d)$ be the matrix defined as the sum of the 
    matrices along the $d$-th block diagonal of the matrix $M$ 
    partitioned in blocks of size $k$. We refer to $X$ as the 
    \emph{$d$-th block diagonal sum of $M$}. Whenever the block
    matrix $M$ does not have a $d$-th diagonal (since it is too small), 
    we define $\bsd_k(M, d)$ to be the zero matrix. 
  \end{definition}
  
  \begin{remark} \label{rem:palindromic-coefficients}
    The linear matrix polynomial $\mathcal L(\x)$ defined in Theorem~\ref{thm:palindromic-linearization} is a linearization
    for a matrix polynomial $P(\x)$ of degree $2k - 1$ if and only if
    the relation 
    \[
      P_j = 
        \bsd_m(M, j - k) + \bsd_m(M, k - j - 1)^\star 
    \]
    holds for any $j = 0, \ldots, 2k - 1$. 
  \end{remark}
  
  Notice that Remark~\ref{rem:palindromic-coefficients} can also be used
  to build the linearization starting from its coefficients. In fact, 
  the relation for $j \in \{ 0, 2k - 1 \}$ simplifies to:
  \[
    P_0 = M_{1,k}^\star, \qquad 
    P_{2k-1} = M_{1, k}. 
  \]
  Having determined the term in position $(1,k)$, one can
  then proceed to fill in the others by imposing the condition
  of Remark~\ref{rem:palindromic-coefficients}. 
  
  Here we provide a concrete example of such a construction. However, 
  we stress that is not the only possible choice. 
  
  \begin{theorem} \label{thm:concrete-palindromic-linearization}
    Let $P(\x) = \sum_{i = 0}^n P_i \x^i$ a degree $2k-1$ and $\star$-palindromic matrix polynomial. 
    Then the matrix polynomial of Theorem~\ref{thm:palindromic-linearization}
    with
    \[
      M = \begin{bmatrix}
        0_m & \cdots & 0_m & P_0^\star \\
        \vdots && \vdots & \vdots \\
        0_m & \cdots & 0_m & P_{k-1}^\star \\
      \end{bmatrix}
    \]
    is a $\star$-palindromic linearization for $P(\x)$. 
  \end{theorem}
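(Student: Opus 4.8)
The plan is to reduce everything to the coefficient characterization already recorded in Remark~\ref{rem:palindromic-coefficients}, which states that the pencil $\mathcal L(\x)$ of Theorem~\ref{thm:palindromic-linearization} is a linearization for a degree $2k-1$ matrix polynomial $P(\x)$ if and only if $P_j = \bsd_m(M, j-k) + \bsd_m(M, k-j-1)^\star$ for every $j = 0, \ldots, 2k-1$. Since Theorem~\ref{thm:palindromic-linearization} already guarantees that $\mathcal L(\x)$ is $\star$-palindromic for \emph{every} choice of $M$, nothing needs to be verified about the structure of the pencil: it suffices to check that the specific block matrix $M$ proposed in the statement reproduces the coefficients $P_j$ of $P(\x)$ through these relations. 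Thus the entire proof is the verification of the $2k$ coefficient identities for this particular $M$.

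First I would compute the block-diagonal sums of the chosen $M$. Because $M$ has all of its nonzero blocks in the last block column, with $M_{i,k} = P_{i-1}^\star$, the block $M_{i,k}$ lies on the block diagonal indexed by $d = k - i$ (reading $d = \text{column} - \text{row}$ off the explicit expansion of $P(\x)$ in Theorem~\ref{thm:palindromic-linearization}). Hence $\bsd_m(M, d) = P_{k-d-1}^\star$ whenever $0 \le d \le k-1$, and $\bsd_m(M, d) = 0$ otherwise. Substituting $d = j - k$ gives $\bsd_m(M, j-k) = P_{2k-j-1}^\star$ for $k \le j \le 2k-1$ and zero elsewhere, while substituting $d = k - j - 1$ gives $\bsd_m(M, k-j-1) = P_j^\star$ for $0 \le j \le k-1$ and zero elsewhere.

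It then remains to test the identity of Remark~\ref{rem:palindromic-coefficients} in the two complementary ranges. For $0 \le j \le k-1$ the first summand vanishes and the second equals $(P_j^\star)^\star = P_j$, so the relation $P_j = P_j$ holds automatically, by the very definition of $M$. For $k \le j \le 2k-1$ the second summand vanishes and the relation reduces to $P_j = P_{2k-1-j}^\star$. Since $P(\x)$ has degree $n = 2k-1$, the $\star$-palindromic identity $P(\x) = \rev P(\x)^\star$ is equivalent to the coefficient relations $P_j = P_{n-j}^\star = P_{2k-1-j}^\star$, so this case holds by hypothesis. Combining the two ranges shows that all $2k$ relations are satisfied, and Remark~\ref{rem:palindromic-coefficients} then yields that $\mathcal L(\x)$ is a linearization for $P(\x)$; it is $\star$-palindromic because Theorem~\ref{thm:palindromic-linearization} produces a $\star$-palindromic pencil for any $M$.

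The only delicate point is bookkeeping: one must fix the orientation convention for the block diagonals consistently and track the two substitutions $d = j-k$ and $d = k-j-1$ so that the lower half of the coefficient range collapses to the trivial identity while the upper half matches the palindromic symmetry exactly. Once the indices are aligned there is no genuine computation left, and the roles of the two halves are in fact forced by the placement of $P_0^\star, \ldots, P_{k-1}^\star$ in the single nonzero block column of $M$.
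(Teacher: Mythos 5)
Your proposal is correct and follows essentially the same route as the paper: both verify the coefficient identities of Remark~\ref{rem:palindromic-coefficients} for the given $M$, splitting into the ranges $0 \leq j \leq k-1$ (where the identity holds trivially by the definition of $M$) and $k \leq j \leq 2k-1$ (where it reduces to the palindromic symmetry $P_j = P_{2k-1-j}^\star$), with $\star$-palindromicity of $\mathcal L(\x)$ inherited from Theorem~\ref{thm:palindromic-linearization}. The only difference is cosmetic: the paper invokes the palindromic hypothesis up front by writing $M_{i,k} = P_{i-1}^\star = P_{2k-i}$, whereas you defer it to the upper coefficient range.
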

  
  \begin{proof}
   Notice that, in the formula above, we have that the only 
   non zero diagonal elements of $M$ are on the last column and 
   $M_{i,k} = P_{i-1}^\star = P_{2k-i}$. We can check that the equality
   of Remark~\ref{rem:palindromic-coefficients} holds. 
   If $0 \leq j \leq k - 1$ we have $\bsd_m(M, j - k) = 0$ 
   and $\bsd_m(M, k - j - 1)^\star = M_{i, k}^\star$ where $i$ 
   is such that $k - i = k - j - 1$ (being on the $(k-j-1)$-th
   diagonal). This implies that $i = j + 1$ and so $M_{i,k}^\star = P_{j}^\star$, 
   as desired. On the other hand, if $k \leq j \leq 2k-1$ we similarly have
   $\bsd_m(M, k - j - 1) = 0$ and $\bsd_m(M, j - k) = M_{i,k}$ with 
   $k-i = j - k$ so that $i = 2k - j$. This again implies that $M_{i,k} = P_{2k-i} = P_{2k - (2k - j)} = P_j$. This concludes the proof. 
  \end{proof}
  
  \section{Deflation of infinite eigenvalues}
    \label{sec:infinity-deflation}
    
  We have observed that in the polynomial sum case of Section~\ref{sec:polynomialsum} the linearization built according
  to Theorem~\ref{thm:polynomialsum} is generally not strong and might have
  many infinite eigenvalues. 
  
  In this section we show what the structure of the infinite eigenvalues
  is and a possible strategy to deflate them
  based on 
  a simplified approach inspired by 
  \cite{beelen1988improved,van1979computation}. 
  In our case we have the advantage of knowing
  exactly which eigenvalue we want to deflate and we can completely
  characterize the structure of the block in the Kronecker
  canonical form corresponding to the infinite eigenvalue. 
  
  \begin{lemma} \label{lem:infinite-eigenstructure}
    Let $\mathcal L(\x)$ be the lineraization obtained from Theorem~\ref{thm:polynomialsum} for the sum of two arbitrary polynomials. 
    Then there exist two unitary bases $Q$ and $Z$ such that 
    \[
      Q^H \mathcal L(\x) Z = \begin{bmatrix}
        I - \x J & \x A_1 - A_0 \\
        0          & \x B_1 - B_0 \\
      \end{bmatrix}, \qquad 
      J = \begin{bmatrix}
        0 & 1 \\
          & \ddots & \ddots \\
          &        & \ddots & 1 \\
          &        &        & 0 \\
      \end{bmatrix}. 
    \]
  \end{lemma}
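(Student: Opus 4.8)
The plan is to analyse the leading coefficient of $\mathcal L(\x)$ and then isolate its infinite eigenstructure by a unitary deflation in the spirit of the staircase algorithm of \cite{van1979computation,beelen1988improved}. First I would write $\mathcal L(\x) = \x \mathcal L_1 + \mathcal L_0$ and split the dual bases of Theorem~\ref{thm:polynomialsum} into coefficients, $L_{\epsilon,\phi}(\x) = \x A_\phi + B_\phi$ and $L_{\eta,\psi}(\x) = \x A_\psi + B_\psi$, so that
\[
  \mathcal L_1 = \begin{bmatrix} 0 & A_\phi^T \\ A_\psi & 0 \end{bmatrix}.
\]
Since the dual bases produced by our constructions are minimal, their leading coefficients $A_\phi$ and $A_\psi$ have full row rank (as established in Lemma~\ref{lem:recurrence-relation}), hence $A_\phi^T$ has full column rank and $\mathcal L_1$ has rank $\epsilon+\eta$, i.e. a one dimensional right and left null space. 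The kernel is $[\,\alpha;0\,]$ with $\alpha \in \ker A_\psi$, and $\ker A_\psi$ is the top (highest degree) coefficient vector $e_1$ of $\pi_{\eta,\psi}(\x)$. The one-dimensionality of $\ker \mathcal L_1$ is the decisive fact: the infinite eigenvalue of $\mathcal L(\x)$ has geometric multiplicity one, so it is carried by a \emph{single} Jordan block, exactly the shape $I - \x J$ asserted in the statement.

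Next I would construct the Jordan chain at infinity, i.e. the chain $v_1,\ldots,v_k$ at the eigenvalue $0$ of the reversed pencil $\mathcal L_1 + \x \mathcal L_0$, defined by $\mathcal L_1 v_1 = 0$ and $\mathcal L_1 v_{j+1} = -\mathcal L_0 v_j$. A short computation shows that in the (non-orthonormal) basis $v_1,\ldots,v_k$ of the columns and $\mathcal L_0 v_1,\ldots,\mathcal L_0 v_k$ of the rows the restricted pencil is precisely $I - \x J$: indeed $\mathcal L_0 v_j \mapsto I$ and $\mathcal L_1 v_{j+1} \mapsto -J$. This is where the clean normal form originates. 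The length $k$ of the chain is then fixed by a dimension count: $\mathcal L(\x)$ has size $\epsilon+\eta+1$ and linearizes the grade $\epsilon+\eta$ polynomial $r(\x)$, whose degree is $\max\{\epsilon,\eta\}$, so the number of infinite eigenvalues, and hence the size of $J$, is $\epsilon+\eta+1-\deg r(\x)$, generically equal to $\min\{\epsilon,\eta\}+1$.

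Finally I would convert the chain into the unitary bases required by the statement. Set $\mathcal V_\infty = \operatorname{span}(v_1,\ldots,v_k)$ and $\mathcal W_\infty = \mathcal L_1 \mathcal V_\infty + \mathcal L_0 \mathcal V_\infty$; by the chain relations $\mathcal L_1\mathcal V_\infty,\ \mathcal L_0\mathcal V_\infty \subseteq \mathcal W_\infty$, so $\mathcal V_\infty$ is a deflating subspace of dimension $k$. Choosing $Z$ unitary whose first $k$ columns span $\mathcal V_\infty$ and $Q$ unitary whose first $k$ columns span $\mathcal W_\infty$ forces the $(2,1)$ block of $Q^H\mathcal L(\x)Z$ to vanish, giving the block upper triangular form; the top-left $k\times k$ block carries the single infinite Jordan block and the bottom-right block $\x B_1 - B_0$ carries the $\deg r(\x)$ finite eigenvalues, with $B_1$ invertible because all infinite eigenvalues have been removed and the whole pencil is regular. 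The main obstacle I expect is exactly this last normalization: orthonormalising the chain does not by itself leave the restricted pencil equal to $I - \x J$ (it only certifies a regular $k\times k$ pencil with one Jordan block at infinity), so the delicate point is to use the residual unitary freedom within $\mathcal V_\infty$ and $\mathcal W_\infty$, together with the bidiagonal staircase reduction, to recover the precise shift form, and to verify rigorously that $B_1$ is nonsingular so that no infinite eigenvalue leaks into the finite part.
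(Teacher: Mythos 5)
Your proposal is correct in substance and follows essentially the same route as the paper's own proof: both arguments hinge on the leading coefficient of $\mathcal L(\x)$ having a one-dimensional kernel, and both then appeal to the staircase deflation of \cite{van1979computation,beelen1988improved} to conclude that the infinite eigenvalue is carried by a single Jordan block of size equal to its algebraic multiplicity. You are in fact more explicit than the paper on two points: you derive the nullity-one property from the full row rank of the leading coefficients of the dual bases (the paper merely asserts that $B$ has nullity one ``in our construction''), and your Jordan chain $\mathcal L_1 v_1 = 0$, $\mathcal L_1 v_{j+1} = -\mathcal L_0 v_j$ exhibits the $I - \x J$ form concretely. Note that your chain vectors span exactly the paper's staircase spaces $\mathcal Z_i = B^{-1}\mathcal Q_{i-1}$, $\mathcal Q_i = A\mathcal Z_i$, so the two mechanisms are the same construction in different clothing.

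The obstacle you flag in the last step is real, but it cannot be removed by ``residual unitary freedom,'' and the paper's proof does not remove it either: it is a looseness in the statement of the lemma itself. Since $\mathcal L_0 \mathcal V_\infty \subseteq \mathcal W_\infty$, the singular values of the constant part $Q_1^H \mathcal L_0 Z_1$ of the top-left block coincide with those of the restriction of $\mathcal L_0$ to $\mathcal V_\infty \to \mathcal W_\infty$; these are invariant under every choice of orthonormal bases of the two (uniquely determined) deflating subspaces, and they are not all equal to $1$ in general --- one can check this already for $\epsilon = \eta = 1$ in the monomial basis. Hence no unitary $Q, Z$ make the top-left block literally equal to $I - \x J$. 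What unitary transformations do deliver is the block upper triangular splitting whose $k \times k$ leading block is \emph{strictly equivalent} to $I - \x J$ (invertible upper triangular constant part, nilpotent single-block $\x$-part); exact equality requires the general invertible transformations that your chain construction already provides. Since the paper's argument likewise only establishes the Kronecker structure at infinity and then silently identifies the staircase block with $I - \x J$, your proposal is at least as complete as the published proof. The honest fix is to weaken the statement (unitary $Q,Z$ with top-left block equivalent to $I - \x J$, or invertible $Q,Z$ with exact equality); nothing downstream is affected, since the deflation procedure of Section~\ref{sec:infinity-deflation} only needs the triangular splitting in order to discard the infinite part and solve $\x B_1 - B_0$.
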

  
  \begin{proof}
    Such a decomposition can be obtained by following the deflation 
    procedure for the infinite eigenvalue described in 
    \cite{beelen1988improved} and \cite{van1979computation}. 
    We only need to prove that the pencil
    obtained in the top-left entry of the transformed matrix is 
    exactly $I - \x J$. 
    Let $A, B$ be matrices such that $\mathcal L(\x) = A - \x B$. 
    We note that $B$ 
    has nullity equal to $1$ in our construction. 
    Recall that the columns of 
    $Q$ and $Z$ are
    orthogonal bases of the sequence of spaces defined by 
    \[
      \mathcal Z_i = \begin{cases}
        \{ 0 \} & \text{if } i = 0 \\
        B^{-1} \mathcal Q_{i-1} & \text{otherwise} \\ 
      \end{cases}, \qquad 
      \mathcal Q_i = A \mathcal Z_i, 
    \]
    where $B^{-1}$ is the pre-image of $B$. The fact that $B$ has nullity
    $1$ implies that the dimension of $\mathcal Z_i$ can increase at most 
    of $1$ at each step. 
    This means that there exist a unique diagonal block in the 
    Kronecker canonical form corresponding
    to the infinite eigenvalue, whose size is exactly equal to the algebraic
    multiplicity of it. 
  \end{proof}
  
  We can use the algorithm described in \cite{beelen1988improved}
  to compute the matrices $Q$ and $Z$ and then solve the
  pencil $\x B_1 - B_0$ instead of 
  $\mathcal L(\x)$. Experiments using this strategy were reported in 
  Section~\ref{sec:polynomialsum}. 
  
  For a more in-depth discussion of the above approach to deflation
  see the work of Berger and Reis \cite{berger2013controllability} 
  which is based on the analysis originally carried out by Wong \cite{wong1974eigenvalue}.

  \section{Conclusions} \label{sec:conclusions}
  
  We have
  provided an extension of the main theorem of \cite{ldvdp}
  to construct linearizations. This new result 
  makes it easier to prove
  that many matrix polynomials are linearizations for, among others, 
  sum of polynomials, rational functions, and
  allows to realize structure preserving pencils. 
  
  We think that the flexibility offered by the adjustment of the
  dual basis in the pencil $\mathcal L(\x)$ allows even for further
  improvement and for the coverage of more structures. We think that in many cases
  this construction can be used as an alternative to other approaches
  to find structured linearizations, such as looking in the spaces
  $\mathbb L_1$ and $\mathbb L_2$ from \cite{mackey2006vector}. 
  
  \section*{Acknowledgements}
    
  This paper presents research results of the Belgian Network DYSCO
  (Dynamical Systems, Control, and Optimization), funded by the
  Interuniversity Attraction Poles Programme initiated by the Belgian
  Science Policy Office.
  
    We wish to thank Piers Lawrence, who helped to understand the 
    construction of dual bases for the Lagrange case, and 
    Thomas Mach for inspiring discussions.

  \bibliographystyle{plain}
  \bibliography{biblio}

\begin{thebibliography}{10}

\bibitem{amiraslani2009linearization}
Amir Amiraslani, Robert~M. Corless, and Peter Lancaster.
\newblock {L}inearization of matrix polynomials expressed in polynomial bases.
\newblock {\em IMA Journal of Numerical Analysis}, 29(1):141--157, 2009.

\bibitem{antoniou2004new}
Efstathios~N. Antoniou and Stavros Vologiannidis.
\newblock {A} new family of companion forms of polynomial matrices.
\newblock {\em Electronic Journal Linear Algebra}, 11:78--87, 2004.

\bibitem{barnett1975companion}
Stephen Barnett.
\newblock {A} companion matrix analogue for orthogonal polynomials.
\newblock {\em Linear Algebra and its Applications}, 12(3):197--202, 1975.

\bibitem{beelen1988pencil}
Theodorus G.~J. Beelen and Paul Van~Dooren.
\newblock {A} pencil approach for embedding a polynomial matrix into a
  unimodular matrix.
\newblock {\em SIAM Journal on Matrix Analysis and Applications}, 9(1):77--89,
  1988.

\bibitem{beelen1988improved}
Theodorus G.~J. Beelen and Paul Van~Dooren.
\newblock {A}n improved algorithm for the computation of {K}ronecker's
  canonical form of a singular pencil.
\newblock {\em Linear Algebra and its Applications}, 105:9--65, 1988.

\bibitem{berger2013controllability}
Thomas Berger and Timo Reis.
\newblock {C}ontrollability of linear differential-algebraic systems—a
  survey.
\newblock In {\em Surveys in Differential-Algebraic Equations I}, pages 1--61.
  Springer, 2013.

\bibitem{bini2005structured}
Dario~A. Bini, Luca Gemignani, and Joab~R. Winkler.
\newblock {S}tructured matrix methods for {CAGD}: an application to computing
  the resultant of polynomials in the {B}ernstein basis.
\newblock {\em Numerical linear algebra with applications}, 12(8):685--698,
  2005.

\bibitem{bini2014solving}
Dario~A. Bini and Leonardo Robol.
\newblock {S}olving secular and polynomial equations: {A} multiprecision
  algorithm.
\newblock {\em Journal of Computational and Applied Mathematics},
  272:276–292, 2014.

\bibitem{corless2007generalized}
Robert~M. Corless.
\newblock {O}n a generalized companion matrix pencil for matrix polynomials
  expressed in the {L}agrange basis.
\newblock In {\em Symbolic-numeric computation}, Trends Mathematics, pages
  1--15. Birkh\"auser, Basel, 2007.

\bibitem{de2010fiedler}
Fernando De~Ter{\'a}n, Froil{\'a}n~M. Dopico, and D.~Steven Mackey.
\newblock {F}iedler companion linearizations and the recovery of minimal
  indices.
\newblock {\em {SIAM} Journal on Matrix Analysis and Applications},
  31(4):2181--2204, 2010.

\bibitem{de2012fiedler}
Fernando De~Ter{\'a}n, Froil{\'a}n~M. Dopico, and D.~Steven Mackey.
\newblock {F}iedler companion linearizations for rectangular matrix
  polynomials.
\newblock {\em Linear Algebra and its Applications}, 437(3):957--991, 2012.

\bibitem{de2014spectral}
Fernando De~Ter{\'a}n, Froil{\'a}n~M. Dopico, and D.~Steven Mackey.
\newblock {S}pectral equivalence of matrix polynomials and the index sum
  theorem.
\newblock {\em Linear Algebra and its Applications}, 459:264--333, 2014.

\bibitem{ldvdp}
Froil{\'a}n~M Dopico, Piers~W Lawrence, Javier P{\'e}rez, and Paul Van~Dooren.
\newblock {B}lock {K}ronecker {L}inearizations of {M}atrix {P}olynomials and
  their {B}ackward {E}rrors.
\newblock {\em MIMS EPrint}, 2016.

\bibitem{farin2002history}
Gerald Farin.
\newblock {A} {H}istory of {C}urves and {S}urfaces in {CAGD}.
\newblock {\em Handbook of Computer Aided Geometric Design}, page~1, 2002.

\bibitem{farin2014curves}
Gerald Farin.
\newblock {\em {C}urves and surfaces for computer-aided geometric design: a
  practical guide}.
\newblock Elsevier, 2014.

\bibitem{farouki2003construction}
Rida~T. Farouki, Tim N.~T. Goodman, and Thomas Sauer.
\newblock {C}onstruction of orthogonal bases for polynomials in {B}ernstein
  form on triangular and simplex domains.
\newblock {\em Computer Aided Geometric Design}, 20(4):209--230, 2003.

\bibitem{heike2015sparse}
Heike Fassbender, Javier~A. P{\'e}rez, and Nikta Shayanfar.
\newblock {A} sparse linearization for {H}ermite interpolation matrix
  polynomials.
\newblock {\em MIMS Eprint}, 2015.

\bibitem{fiedler2003note}
Miroslav Fiedler.
\newblock {A} note on companion matrices.
\newblock {\em Linear Algebra and its Applications}, 372:325--331, 2003.

\bibitem{forney1975minimal}
G.~David Forney~Jr.
\newblock {M}inimal bases of rational vector spaces, with applications to
  multivariable linear systems.
\newblock {\em SIAM Journal on Control}, 13(3):493--520, 1975.

\bibitem{gantmacher}
Felix~R. Gantmacher.
\newblock {\em {T}he {T}heory of {M}atrices. {V}ol. 1}.
\newblock AMS Chelsea Publishing, Providence, RI, 1998.
\newblock Translated from the Russian by K. A. Hirsch, Reprint of the 1959
  translation.

\bibitem{gautschi1983condition}
Walter Gautschi.
\newblock {T}he condition of {V}andermonde-like matrices involving orthogonal
  polynomials.
\newblock {\em Linear algebra and its applications}, 52:293--300, 1983.

\bibitem{gohberg1982matrix}
Israel Gohberg, Peter Lancaster, and Leiba Rodman.
\newblock {\em {M}atrix polynomials}, volume~58.
\newblock SIAM, 1982.

\bibitem{good1961colleague}
Isadore~J. Good.
\newblock {T}he colleague matrix, a {C}hebyshev analogue of the companion
  matrix.
\newblock {\em The Quarterly Journal of Mathematics}, 12(1):61--68, 1961.

\bibitem{higham2006symmetric}
Nicholas~J. Higham, D.~Steven Mackey, Niloufer Mackey, and Fran{\c{c}}oise
  Tisseur.
\newblock {S}ymmetric linearizations for matrix polynomials.
\newblock {\em SIAM Journal on Matrix Analysis and Applications},
  29(1):143--159, 2006.

\bibitem{lawrence2016constructing}
Piers Lawrence and Javier Perez.
\newblock {C}onstructing strong linearizations of matrix polynomials expressed
  in the {C}hebyshev bases.
\newblock {\em MIMS EPrint}, 2016.

\bibitem{mackey2006structured}
D.~Steven Mackey, Niloufer Mackey, Christian Mehl, and Volker Mehrmann.
\newblock {S}tructured polynomial eigenvalue problems: {G}ood vibrations from
  good linearizations.
\newblock {\em SIAM Journal on Matrix Analysis and Applications},
  28(4):1029--1051, 2006.

\bibitem{mackey2006vector}
D.~Steven Mackey, Niloufer Mackey, Christian Mehl, and Volker Mehrmann.
\newblock {V}ector spaces of linearizations for matrix polynomials.
\newblock {\em SIAM Journal on Matrix Analysis and Applications},
  28(4):971--1004, 2006.

\bibitem{mackey2009numerical}
D.~Steven Mackey, Niloufer Mackey, Christian Mehl, and Volker Mehrmann.
\newblock {N}umerical methods for palindromic eigenvalue problems: {C}omputing
  the anti-triangular {S}chur form.
\newblock {\em Numerical Linear Algebra with Applications}, 16(1):63--86, 2009.

\bibitem{van1979computation}
Paul Van~Dooren.
\newblock {T}he computation of {K}ronecker's canonical form of a singular
  pencil.
\newblock {\em Linear Algebra and its Applications}, 27:103--140, 1979.

\bibitem{wong1974eigenvalue}
Kai-Tak Wong.
\newblock {T}he eigenvalue problem $\lambda${T}x+ {S}x.
\newblock {\em Journal of Differential Equations}, 16(2):270--280, 1974.

\end{thebibliography}

\end{document}